\newtheorem{theorem}{Theorem}[section]
\newtheorem{proposition}[theorem]{Proposition}
\newtheorem{definition}[theorem]{Definition}
\newtheorem{corollary}[theorem]{Corollary}
\newtheorem{lemma}[theorem]{Lemma}
\newtheorem{example}[theorem]{Example}
\newtheorem{problem}{Problem}
\newtheorem{newthe}[problem]{Theorem}
\newtheorem{remark}[theorem]{Remark}
\def\beq{\begin{equation}}
\def\eeq{\end{equation}}
\def\bspl{\begin{split}}
\def\espl{\end{split}}
\def\bgm{\begin{pmatrix}}
\def\edm{\end{pmatrix}}
\def\q{\theta}
\def\Q{\Theta}
\def\et{\eta}
\begin{document}
\title{Classification of Minimal Immersions of Conformally Flat 3-tori and 4-tori into Spheres by the First Eigenfunctions}
\author {Ying L\"u\quad~~~ Peng Wang\quad~~~ Zhenxiao Xie}
\date{}
\maketitle

\begin{abstract}
This paper is devoted to the study of minimal immersions of flat $n$-tori into spheres, especially those immersed by the first eigenfunctions (such immersion is  called $\lambda_1$-minimal immersion), which also play important roles in spectral geometry. It is known that there are only two non-congruent $\lambda_1$-minimal $2$-tori 
in spheres, which are both flat. 
For higher dimensional case, the Clifford $n$-torus in $\mathbb{S}^{2n-1}$ might be the only known example in the literature. In this paper, by discussing the general construction of homogeneous minimal flat $n$-tori in spheres, we construct many new examples of $\lambda_1$-minimal flat $3$-tori and $4$-tori. In contrast to the rigidity in the case of  $2$-tori,  
we show that there exists a $2$-parameter family of non-congruent $\lambda_1$-minimal flat $4$-tori. It turns out that the examples we constructed exhaust all $\lambda_1$-minimal immersions of conformally flat $3$-tori and $4$-tori in spheres. 
The classification involves some detailed investigations of shortest vectors in lattices, which  
can also be used to solve the Berger's problem on flat $3$-tori and $4$-tori. The dilation-invariant functional $\lambda_1(g)V(g)^{\frac{2}{n}}$
about the first eignvalue is proved to have maximal value among all flat $3$-tori and $4$-tori. 
\end{abstract}

\indent{\bf Keywords:} Minimal immersions; First eigenvalue; Conformally flat tori; Lattices; Berger's problem

\indent{\bf MSC(2000):\hspace{2mm} 53A30, 53B25, 53B30}

\vspace{4mm}

\section{Introduction}
The study of minimal immersions of Riemannian manifolds into spheres is an interesting topic in differential geometry. It builds a deep link between the spectral theory and minimal submanifold theory. The famous theorem of Takahashi \cite{Taka} states that the isometric immersion $x:(M^n,g)\rightarrow \mathbb{S}^m$ is minimal, if and only if the coordinate functions of $x$ are eigenfunctions of the Laplacian 
with respect to the eigenvalue $n$.

Given a closed Riemannian manifold $(M^n, g)$, we denote the volume and the first eigenvalues of $(M^n, g)$ respectively by $V(g)$ and $\lambda_1(g)$. The dilation-invariant functional
$$\mathcal{L}_1(g)\triangleq \lambda_1(g)V(g)^{\frac{2}{n}}$$
on the set of all smooth Riemannian metrics is a basic functional considered in spectral theory. 
It shows in \cite{Sou-Ili} that the critical metric of $\mathcal{L}_1(g)$ among 
all smooth Riemannian metrics on  $M^n$ (called \emph{$\lambda_1$-critical metric}) admits an isometric minimal immersion of $M^n$ in spheres by the first eigenfunctions (called \emph{$\lambda_1$-minimal immersion}). 

Moreover, it was proved by Hersch \cite{Hersch} that on the topological $2$-sphere, among all smooth Riemannian metrics, $8\pi$ is the maximal value of $\mathcal{L}_1(g)$, which can only be attained by the standard metric. In 1973, Berger \cite{Berger} obtained the maximal value 
of $\mathcal{L}_1(g)$ on the topological $2$-torus among all flat metrics. This value is attained by the $\lambda_1$-minimal immersion of the equilateral $2$-torus in $\mathbb{S}^5$. 
Since then, 
finding the uniform upper bound of $\mathcal{L}_1(g)$ among all smooth metrics is referred as {\em Berger's 
problem}. By introducing the conformal volume, Li and Yau \cite{Li-Yau} solved the case of $\mathbb{R}P^2$ in 1982, whose upper bound is attained by the  $\lambda_1$-minimal immersion of $\mathbb{R}P^2$ in $\mathbb{S}^4$ (i.e., Veronese surface). Due to also the work of El Soufi and Ilias \cite{Sou-Ili}, on any closed manifold of dimension $n$, Li-Yau's conformal volume can be used to provide an upper bound for $\mathcal{L}_1(g)$ in the  
conformal class $[g]$. In the mean time, they also show that for the conformal manifold $(M^n, [g])$ admitting a $\lambda_1$-minimal immersion in spheres, the volume of such $\lambda_1$-minimal immersion is exactly equal to the conformal volume. 
Therefore it realizes the supremum for $\mathcal{L}_1(g)$ in $[g]$.  
Furthermore, for a given conformal manifold, Montiel-Ros \cite{Mon-Ros} and El Soufi-Ilias \cite{Sou-Ili} proved there exists at most one $\lambda_1$-minimal metric in the conformal class.  
Combining this with the existence of $\lambda_1$-maximal metric, Nadirashvili finally solved the Berger's problem completely for the topological $2$-torus in \cite{Nadi}.  We point out that for the case of Klein bottle, the Berger's problem has also been solved due to the work of Nadirashvili \cite{Nadi}, Jakobson-Nadirashvili-Polterovich \cite{Nadi2} and El Soufi-Giacomini-Jazar \cite{Soufi2}, where one $\lambda_1$-minimal immersion of the  Klein bottle is presented (see \cite{Hirsch-Mader}, \cite{Nadi2}). For surfaces of higher genus, we refer to \cite{Petrides, Karpukhin, Matthiesen-Siffert} and references therein. 
In contrast to 
the dimension $2$, 
Colbois and Dodziuk proved in \cite{Colbois-Dodziuk} that  there is no uniform bound for the functional $\mathcal{L}_1(g)$ on any closed manifold of dimension $n\geq 3$. 
This implies that one might 
consider the Berger's problem restricting to a given conformal class (see 
\cite{Sou-Ili3, Karpukhin-Stern, Petrides2} and reference therein), for which the investigation of $\lambda_1$-minimal immersions of higher dimensional manifolds into spheres plays an important role. 

In the literature, there have been several known classes of $\lambda_1$-minimal submanifolds in spheres. A famous conjecture of Yau states that any closed embedded minimal hypersurface in $\mathbb{S}^{n+1}$ is $\lambda_1$-minimal. Due to the work of Muto-Onita-Urakawa \cite{Muto-Onita-Urakawa} and Tang-Yan \cite{Tang-Yan} on this conjecture, we know all isoparametric hypersurfaces and some ones of their focal submanifolds form a class of $\lambda_1$-minimal submanifolds in spheres. Another class of examples is due to Takahashi
\cite{Taka}, who proved that for any positive integer $k$, up to a dilation of the metric, any compact irreducible homogeneous Riemannian manifold can be immersed minimally into a certain sphere by the $k$-th eigenfunctions (we call it \emph{$\lambda_k$-minimal immersion} for short). Later, the case of sphere equipped with the constantly curved metric was investigated in detail by Do Carmo and Wallach \cite{doCarmo-Wallach}. 
It was proved that when $n\geq 3$, the linearly full $\lambda_k$-minimal immersion of $n$-sphere has rigidity if and only if $k\leq 3$. Moreover, they also proved that the immersion will span the full $k$-eigenspace when $k\leq3$.
In this paper, we will show that these two properties do not hold for minimal flat tori of dimension $4$, even for the case of $\lambda_1$-minimal immersion. To be precise, we construct a $\lambda_1$-minimal flat $4$-torus in $\mathbb{S}^{11}$, which has rigidity but does not span the whole  eigenspace (see Example~\ref{ex:7-1}).
Furthermore, a $2$-parameter family of non-congruent $\lambda_1$-minimal flat $4$-torus in $\mathbb{S}^{23}$ is also constructed, among which there is a $1$-parameter family living in $\mathbb{S}^{15}$, neither rigid nor fully-spanning the eigenspace (see Example~\ref{ex-4tori}, Proposition~\ref{prop:except} and Remark~\ref{rk-1para}). 

Due to the work of Kenmotsu \cite{Kenmotsu} and Bryant \cite{Bryant}, we know all minimal flat $2$-tori in spheres are homogeneous. Let $\Lambda_n$ denote a  lattice of rank $n$. In \cite{Bryant}, Bryant proved that a flat torus $T^2=\mathbb{R}/\Lambda_2$ admits minimal immersions in spheres, if and only if  the Gram matrix of $\Lambda_2$ is rational (i.e., all entries are rational numbers) up to some dilation. This implies there are infinite non-congruent minimal flat $2$-tori in spheres. But among them, there are only two $\lambda_1$-minimal ones: 
the Clifford $2$-torus in $\mathbb{S}^3$, and the equilateral $2$-torus in $\mathbb{S}^5$. This classification is due to the work of Montiel-Ros \cite{Mon-Ros} and El Soufi-Ilias \cite{Sou-Ili}.

In contrast to the plentiful results on dimension $2$ in the literature, minimal flat tori of higher dimension haven't been investigated so much, especially for those $\lambda_1$-minimal ones. As far as we know, the Clifford $n$-torus in $\mathbb{S}^{2n-1}$ is the only known $\lambda_1$-minimal example (see \cite{Park-Urakawa}). In this paper, we construct
five non-congruent $\lambda_1$-minimal flat $3$-tori, a $2$-parameter family and another sixteen non-congruent $\lambda_1$-minimal flat $4$-tori in spheres. 
It turns out that these examples exhaust all non-congruent $\lambda_1$-minimal immersions of conformally flat $3$-tori and $4$-tori into spheres, see Theorem~\ref{thm:3}  
and Theorem~\ref{thm-classify4}. The classifications are listed in the following two tables (for the definition of irreducible and reducible see Section~\ref{sec-examples}). 
\begin{table}[H]
\renewcommand{\arraystretch}{2}
   \centering
\begin{tabular}{|c | c | c | c | c | c|}
\hline
$\mathbb{S}^n$&Total Numbers&Reducible&Irreducible &Examples & $\lambda_1(g)V(g)^{\frac{2}{n}}$\\
\hline
$\mathbb{S}^5$ &1 &1 &0 &E.g.~\ref{ex:3-prod} &$4 \pi ^2$\\
\hline
$\mathbb{S}^7$ &2  &1 &1 &E.g.~\ref{ex:3-prod},~\ref{ex:3-1} & $\frac{4 \sqrt[3]{4} }{\sqrt[3]{3}}\pi ^2$, $3 \sqrt[3]{4} \pi ^2$\\
\hline
$\mathbb{S}^{9}$ &1 &0 &1 &E.g.~\ref{ex:3-2} & $\frac{8 \sqrt[3]{2} }{\sqrt[3]{9}}\pi ^2$\\
\hline
$\mathbb{S}^{11}$ &1 &0 &1  &E.g.~\ref{ex:3-3}  &$4 \sqrt[3]{2} \pi ^2$ \\
\hline
\end{tabular}
   \caption{The classification of $\lambda_1$-minimal immersions of confromally flat $3$-tori }
    \label{tab:my_tabel3}
\end{table}
\begin{table}[H]{\small
\renewcommand{\arraystretch}{2}
   \centering
\begin{tabular}{|c | c | c | c | c | c|}
\hline
$\mathbb{S}^n$&Total Numbers&Reducible&Irreducible&Examples &$\lambda_1(g)V(g)^{\frac{2}{n}}$\\
\hline
$\mathbb{S}^7$ &2 &1 &1 &E.g.~\ref{ex:4-prod}, E.g.~\ref{ex-4tori}&$4 \pi ^2,4 \sqrt{2} \pi ^2$  \\
\hline
$\mathbb{S}^9$ &3  &2 &1  &E.g.~\ref{ex:4-prod},~\ref{ex:5-1}& $\frac{4 \sqrt{2} }{\sqrt[4]{3}}\pi ^2$, $2 \sqrt[4]{27} \pi ^2$,  $\frac{16}{\sqrt[4]{125}}\pi ^2$ \\
\hline
$\mathbb{S}^{11}$ &4 &2 &2 &E.g.~\ref{ex:4-prod},~\ref{ex:6-1},~\ref{ex:7-1} &$\frac{8 }{\sqrt{3}}\pi ^2$,~$\frac{8}{\sqrt{3}}\pi ^2$, $2 \sqrt{6} \pi ^2$, $\frac{8 \sqrt{2} }{\sqrt[4]{27}}\pi ^2$\\
\hline
$\mathbb{S}^{13}$ &3 &1 &2 &E.g.~\ref{ex:4-prod}
,~\ref{ex:7-2},~\ref{ex:7-3}&$4 \sqrt[4]{2} \pi ^2$, \!$\frac{8 \sqrt{2} }{\sqrt[4]{27}}\pi ^2$,\! $\frac{4 \sqrt[4]{26 \sqrt{13}-70} }{\sqrt{3}}\pi^2$ \\
\hline
$\mathbb{S}^{15}$ &$1$-family \& 2 &0 & $1$-family \& 2 &E.g.~\ref{ex-4tori},~\ref{ex:8-1},~\ref{ex:8-2}&$4 \sqrt{2} \pi ^2$, $\frac{4 \sqrt[4]{8}}{\sqrt[4]{3}}\pi^2$, $\frac{4 \sqrt[4]{8 \sqrt{3}+12}}{\sqrt{3}}\pi^2$\\
\hline
$\mathbb{S}^{17}$ &2 &0 &2 &E.g.~\ref{ex:9-1},~\ref{ex:9-2}&$\frac{16 }{3}\pi ^2$, $4 \sqrt[4]{3} \pi ^2$\\
\hline
$\mathbb{S}^{19}$ &1 &0 &1 &E.g.~\ref{ex:10}&$\frac{8}{\sqrt[4]{5}} \pi ^2$\\
\hline
$\mathbb{S}^{23}$ &$2$-family &0 &$2$-family &E.g.~\ref{ex-4tori}&$4 \sqrt{2} \pi ^2$\\
\hline
\end{tabular}
    \caption{The classification of $\lambda_1$-minimal immersions of confromally flat $4$-tori 
    }
    \label{tab:my_tabel}
    }
\end{table}
\noindent In the above table, 
only Example~\ref{ex-4tori} describes a continuous family of non-rigid $\lambda_1$-minimal $4$-tori as mentioned before.    

\begin{example}\label{ex-4tori} Denote by $\{e_i\}$ the standard basis of $\mathbb{R}^4$. The flat $4$-torus $$T^4=\mathbb{R}^4/\mathrm{Span}_{\mathbb{Z}}\{e_1-e_4, e_2-e_4, e_3-e_4, 2e_4\}$$
admits a 
$2$-parameter family of non-congruent $\lambda_1$-minimal 
immersions in $\mathbb{S}^{23}$ 
given as follows:
\begin{equation*}
\begin{aligned}
\!\!\Big(a_1e^{i\pi(u_1+u_2+u_3+u_4)},a_1e^{i\pi(u_1+u_2-u_3-u_4)},a_1e^{i\pi(u_1-u_2+u_3-u_4)},a_1&e^{i\pi(-u_1+u_2+u_3-u_4)},\\
a_2e^{i\pi(u_1+u_2+u_3-u_4)},a_2e^{i\pi(u_1+u_2-u_3+u_4)},a_2e&^{i\pi(u_1-u_2+u_3+u_4)}, a_2e^{i\pi(u_1-u_2-u_3-u_4)},\\ &~a_3e^{2i\pi u_1},a_3e^{2i\pi u_2},a_3e^{2i\pi u_3},a_3e^{2i\pi u_4}\Big),
\end{aligned}
\end{equation*}
where $0\leq a_1\leq a_2\leq a_3$ and $a_1^2+a_2^2+a_3^2=\frac{1}{4}$. See Section \ref{sec-4} for more details.
\end{example}

Our construction depends on the variational characterizations (see Theorem~\ref{thm-variation} and Theorem~\ref{thm-vari}) we obtained for homogeneous minimal flat tori in spheres. Roughly speaking,  the construction of a homogeneous minimal flat $n$-torus in some sphere is equivalent to finding a $2$-tuple $\{Y, Q\}$ (we call it {\em matrix data}) satisfying some constrains, where $Q^{-1}$ is a Gram matrix of the lattice corresponding to this torus, and $Y$ is a set of finite integer vectors in $\mathbb{Z}^n$ describing the linear relations between lattice vectors involved in the minimal immersion.   
Theoretically, all homogeneous minimal flat tori in spheres can be constructed by the approaches we provided (see subsection~\ref{subsec-vara}). To construct $\lambda_1$-minimal flat $n$-tori, we also need to deal with the problem of finding all shortest vectors in a lattice, which is important but difficult in the theory of lattice (or geometry of numbers). Fortunately, a result of Ryshkov \cite{Ryshkov73} proved in Minkowski's reduction theory can be used in our construction to overcome this obstacle.

To classify all $\lambda_1$-minimal immersions of conformally flat $3$-tori and $4$-tori in spheres, it follows from the work of El Soufi and Ilias \cite{Sou-Ili} that we only need to classify all $\lambda_1$-minimal immersions of flat $3$-tori and $4$-tori in spheres. It turns out that they are all homogeneous  (in fact, a sufficient condition is given in Proposition~\ref{prop-homo} for general minimal flat tori in spheres to be homogeneous). 
 Note that the moduli space of flat tori (modulo isometry) is $SL(n,\mathbb{Z})\setminus GL(n,\mathbb{R})\,/\,O(n)$ (see \cite{Wolpert}, or Section~\ref{sec3}). 
 The action of $SL(n,\mathbb{Z})$ makes the classification highly nontrivial. 
However, our variational characterization suggests that all we need to do is to find out all the possible integer sets $Y$, where $Q$ is uniquely determined if it exists. 
To do this, a coarse classification to lattices of rank no more than $4$ is given at first (see Theorem~\ref{thm-gen}), from which some  necessary constrains on $Y$ can be obtained. Then after introducing some invariants to the set of shortest lattice vectors, we can determine all the possibilities of $Y$ up to the action of $SL(n,\mathbb{Z})$.  

The volume of these $\lambda_1$-minimal flat tori we construct are also  calculated (see Section~\ref{sec-examples}). It follows from the theory of conformal volume that  these $\lambda_1$-minimal metrics  maximize the functional $\lambda_1(g)V(g)^{\frac{2}{n}}$  in their respective conformal classes. 
Among all $\lambda_1$-minimal flat $3$-tori, the maximal value of $\lambda_1(g)V(g)^{\frac{2}{n}}$ is $4 \sqrt[3]{2} \pi ^2$,  
and it is $4 \sqrt[2]{2} \pi ^2$ among all  $\lambda_1$-minimal flat $4$-tori (see Table~\ref{tab:my_tabel3} and Table~\ref{tab:my_tabel}). 
In fact, using the investigation about lattices given in Section~\ref{sec-lattice} and Section~\ref{sec-4}, we can prove the following theorem, which can be seen as a generalization of Berger's result 
from flat $2$-tori to flat $3$ and $4$-tori. 

\begin{newthe}\label{thm1}
Consider the dilation-invariant functional $\lambda_1(g)V(g)^{\frac{2}{n}}$ on the topological $n$-torus. 

(1) When $n=3$, among all flat metrics, 
$$\lambda_1(g)V(g)^{\frac{2}{n}}\leq 4 \sqrt[3]{2} \pi ^2,$$
and the equality is attained by the $\lambda_1$-minimal flat $3$-torus given in Example~\ref{ex:3-3}. 

(2)  When $n=4$, among all flat metrics, 
$$\lambda_1(g)V(g)^{\frac{2}{n}}\leq 4 \sqrt{2} \pi ^2,$$
and the equality is attained by those $\lambda_1$-minimal flat $4$-tori given in Example~\ref{ex-4tori}. 
\end{newthe}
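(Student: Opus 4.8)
The plan is to recast the functional on flat metrics purely in terms of lattice geometry and to recognize the maximization as the computation of Hermite's constant in dimensions $3$ and $4$. First I would parametrize a flat metric on $T^n$ by its defining lattice $\Lambda$, so that $T^n=\mathbb{R}^n/\Lambda$. The Laplace eigenfunctions are $e^{2\pi i\langle w,\cdot\rangle}$ with $w$ ranging over the dual lattice $\Lambda^*$, with eigenvalue $4\pi^2|w|^2$; hence $\lambda_1(g)=4\pi^2 m(\Lambda^*)$, where $m(\Lambda^*)=\min_{0\neq w\in\Lambda^*}|w|^2$ is the squared length of a shortest dual vector. Since $V(g)=\mathrm{covol}(\Lambda)=\mathrm{covol}(\Lambda^*)^{-1}$, one obtains
\[
\lambda_1(g)V(g)^{2/n}=4\pi^2\,\frac{m(\Lambda^*)}{\mathrm{covol}(\Lambda^*)^{2/n}},
\]
which is exactly $4\pi^2$ times the Hermite invariant of $\Lambda^*$. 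Thus maximizing $\mathcal{L}_1$ over all flat metrics is equivalent to maximizing the Hermite invariant over all rank-$n$ lattices, whose supremum is Hermite's constant $\gamma_n$; the asserted bounds are precisely $4\pi^2\gamma_3=4\sqrt[3]{2}\pi^2$ and $4\pi^2\gamma_4=4\sqrt{2}\pi^2$.

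The core of the proof is then to establish $\gamma_3=\sqrt[3]{2}$ and $\gamma_4=\sqrt{2}$ within the paper's own framework. I would feed the coarse classification of lattices of rank at most $4$ (Theorem~\ref{thm-gen}) together with the shortest-vector analysis of Section~\ref{sec-lattice} and Section~\ref{sec-4} into this optimization: after reducing to a Minkowski-reduced representative and using Ryshkov's description of the shortest vectors \cite{Ryshkov73}, the Hermite invariant becomes an explicit function of finitely many reduced Gram parameters, and one bounds it by a case analysis over the normal forms produced by Theorem~\ref{thm-gen}. This simultaneously yields the upper bound and shows that equality forces the dual lattice $\Lambda^*$ to be, up to dilation and the action of $O(n)$, the face-centred cubic lattice for $n=3$ and the lattice $D_4$ for $n=4$.

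For the equality statement I would argue that a maximizer of the Hermite invariant is eutactic, i.e. its shortest vectors $\{v_j\}\subset\Lambda^*$ admit positive weights $a_j^2$ with $\sum_j a_j^2\, v_j v_j^{\top}\propto I_n$; this balancing condition is exactly the requirement (via Takahashi's theorem) that the map $x\mapsto(a_j e^{2\pi i\langle v_j,x\rangle})_j$ be a $\lambda_1$-minimal isometric immersion into a sphere. Hence the extremal torus is $\lambda_1$-minimal and therefore appears in the classification of Theorem~\ref{thm:3} and Theorem~\ref{thm-classify4}; reading off Table~\ref{tab:my_tabel3} and Table~\ref{tab:my_tabel} identifies it with Example~\ref{ex:3-3} for $n=3$ and with the family of Example~\ref{ex-4tori} for $n=4$, whose $\mathcal{L}_1$-values are the largest entries. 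Conversely, every $\lambda_1$-minimal flat torus is itself a flat torus, so the maximum over the classified list is a lower bound for the maximum over all flat metrics, and the two coincide.

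The main obstacle I anticipate is the global upper bound for $n=4$: unlike the surface case, the reduced domain of rank-$4$ lattices is genuinely higher-dimensional, and the $SL(4,\mathbb{Z})$ action makes the identification of the shortest-vector configuration delicate. Controlling the Hermite invariant across all the normal forms of Theorem~\ref{thm-gen}, and ruling out spurious local maxima other than $D_4$, is where the detailed lattice investigation is indispensable; the $n=3$ case is analogous but substantially lighter. A minor secondary point is to confirm existence of the maximizer, so that the eutaxy first-order condition applies, which follows from the finiteness of $\gamma_n$ together with compactness of the reduced domain at fixed covolume.
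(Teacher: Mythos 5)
Your proposal is correct in substance but takes a genuinely different route from the paper. You recast $\lambda_1(g)V(g)^{2/n}$ as $4\pi^2$ times the Hermite invariant of the dual lattice, get existence of a maximizer from compactness of the reduced domain, and then use eutaxy at the maximum — $\sum_j a_j^2 v_jv_j^{\top}\propto I_n$ with the $v_j$ the shortest dual vectors — which via Takahashi's theorem says precisely that the extremal torus is $\lambda_1$-minimal, so the bound and the equality case can be read off the classification (Theorem~\ref{thm:3}, Theorem~\ref{thm-classify4} and Tables~\ref{tab:my_tabel3}, \ref{tab:my_tabel}). The paper never mentions Hermite constants or eutaxy: it reduces to minimizing $\det$ over Gram matrices with unit diagonal (the normalization being legitimate by Lemma~\ref{lem-mini} and Theorem~\ref{thm-gen}), shows via Ryshkov's finite criterion (Theorem~\ref{thm:special}) that this set $\Omega_2$ is a compact convex polytope, uses log-concavity of $\det$ (Lemma~\ref{lem-det}) to push the minimum to a vertex, observes that a vertex carries at least $\frac{n(n+1)}{2}$ shortest-vector pairs (Lemma~\ref{rk-sharpxi}), and matches this against the upper bound of Remark~\ref{rk-maxxi} and the Section~\ref{sec-4} classification to identify every vertex with Example~\ref{ex:3-3}, Example~\ref{ex:10} or Example~\ref{ex-4tori} (Proposition~\ref{prop-vertex}). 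What your route buys is brevity and contact with classical lattice theory (citing the known values $\gamma_3=\sqrt[3]{2}$, $\gamma_4=\sqrt{2}$ would make the upper bound immediate); what the paper's route buys is self-containedness within its matrix-data formalism and an explicit determination of all extremal configurations without importing Voronoi theory.

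Two caveats on your argument. First, ``a maximizer is eutactic'' is not literally a first-order condition, since the Hermite invariant is a minimum of linear functionals and hence non-smooth; you need either Voronoi's theorem (extreme $\Rightarrow$ perfect and eutactic) or a direct separating-hyperplane perturbation argument, and the latter only yields weak eutaxy with coefficients $a_j^2\geq 0$ — which, fortunately, is still enough to build the $\lambda_1$-minimal immersion, as the paper's own Example~\ref{ex:7-1} (with one vanishing coefficient $c_3^2=0$) shows. Second, your middle paragraph proposing to re-derive $\gamma_3$ and $\gamma_4$ by ``case analysis over the normal forms of Theorem~\ref{thm-gen}'' is the vaguest step and is in fact superseded by your eutaxy-plus-classification argument: Theorem~\ref{thm-gen} only dichotomizes lattices into prime ones and one exception, and the genuine work of bounding the invariant is exactly what the paper's vertex analysis and the shortest-vector classification of Sections~\ref{sec-lattice}--\ref{sec-4} supply.
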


Inspired by this theorem, it is natural to consider the Berger's problem on conformally flat $3$-tori and $4$-tori: whether $4 \sqrt[3]{2} \pi ^2$ and $4 \sqrt{2} \pi ^2$ are respectively the upper bounds of $\lambda_1(g)V(g)^{\frac{2}{n}}$ on the topological $3$-torus and $4$-torus among all smooth conformally flat metrics.  

In \cite{Sou-Ili3}, El Soufi and Ilias exhibited a class of flat $n$-tori for which the endowed flat metric maximizes $\lambda_1(g)V(g)^{\frac{2}{n}}$ on its conformal class (see Corollary 3.1 in their paper). Combining their result with our work (Theorem~1 and Theorem~\ref{thm-gen}), we can partially  solve the above problem.  
\begin{newthe}\label{thm2} 
Suppose $g$ is a smooth Riemannian metric on the the topological $n$-torus. If $g$ is conformal equivalent to a flat metric whose first eigenspace is of dimension no less than $2n$, then    
$\lambda_1(g)V(g)^{\frac{2}{n}}\leq 4 \sqrt[3]{2} \pi ^2$ when  $n=3$,
and $\lambda_1(g)V(g)^{\frac{2}{n}}\leq 4 \sqrt{\pi ^2}$
when $n=4$. 
\end{newthe}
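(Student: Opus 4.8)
The plan is to reduce Theorem~\ref{thm2} to Theorem~1 by invoking the conformal-volume machinery of El Soufi--Ilias together with their explicit class of conformally maximal flat tori. First I would recall the key input: by the theory of conformal volume (developed in \cite{Li-Yau} and \cite{Sou-Ili}), for any conformal class $[g]$ on a closed manifold $M^n$ one has the bound $\lambda_1(g)V(g)^{\frac{2}{n}}\leq n\,V_c(M,[g])^{\frac{2}{n}}$, where $V_c$ is the conformal volume, and equality in the supremum is realized precisely by a $\lambda_1$-minimal immersion if one exists in $[g]$. The crucial point quoted from Corollary~3.1 of \cite{Sou-Ili3} is that for a suitable class of flat $n$-tori the endowed flat metric itself maximizes $\lambda_1(g)V(g)^{\frac{2}{n}}$ over its entire conformal class. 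So the strategy is: given a smooth metric $g$ conformal to a flat metric $g_0$, I would first argue that $g_0$ lies in the El Soufi--Ilias maximal class, whence $\lambda_1(g)V(g)^{\frac{2}{n}}\leq \lambda_1(g_0)V(g_0)^{\frac{2}{n}}$, and then apply Theorem~1 to the flat metric $g_0$ to conclude the numerical bound.

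The second step is to identify exactly when the hypothesis of Corollary~3.1 of \cite{Sou-Ili3} is satisfied, and this is where the dimension assumption on the first eigenspace enters. Their sufficient condition for a flat torus to be conformally maximal is phrased in terms of the first eigenfunctions admitting enough independent ones to give an immersion into a sphere that realizes the conformal volume --- concretely, that the flat torus is $\lambda_1$-minimally immersible, which forces the multiplicity of $\lambda_1$ to be large. I would make precise that the assumption ``first eigenspace of dimension no less than $2n$'' guarantees that $g_0$ is one of the flat tori in their maximal class: for an $n$-torus $\mathbb{R}^n/\Lambda$, the first eigenspace is spanned by $\{e^{2\pi i\langle v^*,u\rangle}\}$ over the shortest dual lattice vectors $\pm v^*$, so dimension $\geq 2n$ means there are at least $n$ pairs of shortest dual vectors, which is exactly the condition ensuring a $\lambda_1$-minimal immersion into a sphere exists (cf.\ Takahashi's theorem \cite{Taka} and the variational characterization Theorem~\ref{thm-variation}). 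I would verify that this multiplicity condition matches the hypotheses of Corollary~3.1 of \cite{Sou-Ili3}, so that the conformal maximality applies to $g_0$.

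Once $g_0$ is known to be conformally maximal, the proof splits cleanly by dimension. For $n=3$, Theorem~1(1) gives $\lambda_1(g_0)V(g_0)^{\frac{2}{3}}\leq 4\sqrt[3]{2}\,\pi^2$ over all flat metrics, and combining the two inequalities yields $\lambda_1(g)V(g)^{\frac{2}{3}}\leq 4\sqrt[3]{2}\,\pi^2$. For $n=4$, Theorem~1(2) gives $\lambda_1(g_0)V(g_0)^{\frac{1}{2}}\leq 4\sqrt{2}\,\pi^2$, and the same chaining yields the stated bound (the displayed constant $4\sqrt{\pi^2}$ in the statement should read $4\sqrt{2}\,\pi^2$). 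The logical skeleton is therefore: conformal maximality of $g_0$ pushes the problem from the arbitrary smooth metric $g$ to the flat metric $g_0$, and Theorem~1 closes the estimate at the flat level.

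The main obstacle I anticipate is the precise matching in the second step: one must confirm that the quantitative hypothesis on the eigenspace dimension ($\geq 2n$) is genuinely equivalent to, or at least sufficient for, membership in the maximal class of Corollary~3.1 of \cite{Sou-Ili3}, rather than merely necessary. This requires unpacking what their corollary assumes --- typically the existence of a $\lambda_1$-minimal (equivalently, the flat metric being a minimum of conformal volume in its class) --- and checking that the lattice-theoretic count of shortest dual vectors supplied by the dimension hypothesis produces exactly such an immersion via the variational construction. If, as I expect, the El Soufi--Ilias condition is precisely that the flat torus admit a $\lambda_1$-minimal immersion into a sphere (which is automatic once the first eigenspace has dimension $\geq 2n$, providing enough coordinate eigenfunctions to form an isometric immersion after Takahashi's normalization), then the argument goes through with no further lattice analysis beyond what Theorem~1 already establishes.
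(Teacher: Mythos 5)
Your overall route is exactly the paper's: the entire proof given in Section~\ref{sec-Berge} consists of quoting Corollary~3.1 of El Soufi--Ilias \cite{Sou-Ili3} (a flat metric whose first eigenspace has dimension at least $2n$ maximizes $\lambda_1(g)V(g)^{\frac{2}{n}}$ in its conformal class) and chaining this with Theorem~\ref{thm1}, precisely as you propose; you are also right that the constant $4\sqrt{\pi^2}$ in the statement is a misprint for $4\sqrt{2}\,\pi^2$.

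There is, however, one genuinely false claim in your second step: that first-eigenspace dimension $\geq 2n$ is ``exactly the condition ensuring a $\lambda_1$-minimal immersion into a sphere exists,'' and that such an immersion is ``automatic'' after Takahashi's normalization. Takahashi's theorem requires the eigenmap to be \emph{isometric}, and for a flat torus this is the nontrivial constraint \eqref{eq:flat}, namely $\sum_j c_j^2 A_j^tA_j=\frac{1}{n}Q^{-1}$, which fails for most lattices having $N\geq n$ pairs of shortest dual vectors. Concretely, for $n=3$ let $Q_t$ have unit diagonal and all off-diagonal entries equal to $-t$ with $t\in(0,\tfrac{1}{3})$: the shortest dual vectors are exactly the three pairs $\pm e_i$, so the first eigenspace has dimension $6=2n$ and the hypothesis of the theorem holds, yet \eqref{eq:flat} would force $Q_t^{-1}$ to be diagonal, which fails for every $t\neq 0$. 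This gives a continuum of flat tori satisfying the hypothesis but admitting no $\lambda_1$-minimal immersion, consistent with Theorem~\ref{thm:3}, which says only five $\lambda_1$-minimal conformally flat $3$-tori exist. Your proof survives only because the hypothesis of Corollary~3.1 of \cite{Sou-Ili3} is the eigenspace-dimension condition itself, not the existence of a $\lambda_1$-minimal immersion; indeed it must apply to tori such as the family $Q_t$ above, so it cannot be equivalent to $\lambda_1$-minimality (its maximality conclusion rests on an eigenmap by first eigenfunctions, which the exponentials always furnish, rather than on an isometric minimal immersion realizing the conformal volume). The correct execution is therefore to cite the corollary verbatim with the multiplicity hypothesis and delete the claimed equivalence; had the corollary actually required a $\lambda_1$-minimal immersion, your reduction would break down exactly on the tori your ``main obstacle'' paragraph worries about.
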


The paper is organized as follows. In Section~\ref{sec3}, we firstly recall the  basic spectral theory of flat tori,  and then discuss the homogeneity of minimal flat tori in spheres. Section~\ref{sec-variation} is devoted to presenting our basic setup on homogeneous minimal flat tori, as well as the variational characterizations obtained for them. New examples of $\lambda_1$-minimal flat $3$-tori and $4$-tori are constructed in Section~\ref{sec-examples}. We devote Section~\ref{sec-lattice} to investigate the shortest vectors of lattices, where a coarse classification to lattices of rank no more than $4$ is given. The classification of $\lambda_1$-minimal immersions of conformally flat $3$-tori and $4$-tori are obtained in Section~\ref{sec-4}. 
A class of $\lambda_1$-minimal flat $n$-tori is presented in Section~\ref{sec-high} 
as an application of our construction method in higher dimensions. Finally, Section~\ref{sec-Berge} is devoted to discuss Berger's problem on conformally flat $3$-tori and $4$-tori, where Theorem \ref{thm1} and Theorem \ref{thm2} are proved.

\section{On isometric minimal immersions of 
flat tori}\label{sec3}
In this section, we will firstly recall the basic theory of flat tori. Then a sufficient condition for minimal flat tori in spheres to be homogeneous will be given.
\vspace{3mm}
\subsection{Flat tori and lattices}
It is well known that a flat torus $T^n$ of dimension $n$ can be described as
$$T^n=\mathbb{R}^n/{\Lambda_n},$$
where $\Lambda_n$ is a lattice of rank $n$ on $\mathbb{R}^n$. Set $L_n$ to be the generator matrix of $\Lambda_n$, which means $\Lambda_n$ can be generated by row vectors of $L_n$. Two tori $T^n=\mathbb{R}^n/{\Lambda_n}$ and $\widetilde{T}^n=\mathbb{R}^n/{\widetilde{\Lambda}_n}$ are isometric if and only if $\Lambda_n$ and $\widetilde{\Lambda}_n$ are isometric, i.e., there exists an orthogonal matrix $O$ and an unimodular matrix $U\in SL(n,\mathbb{Z})$, such that $L_n=U\,\widetilde{L}_n\, O$, where $L_n$  (w.r.t. $\widetilde{L}_n$) is a generator matrix 
of lattice $\Lambda_n$ (w.r.t. $\widetilde{\Lambda}_n$).
It follows that the moduli space of flat $n$-tori  is
$$SL(n,\mathbb{Z})\setminus GL(n,\mathbb{R})\,/\,O(n).$$

The dual lattice of $\Lambda_n$ is defined to be a lattice $\Lambda_n^{*}$, whose generator matrix ${L}_n^*$ satisfies
${L}_n({L}_n^*)^t=I_n.$
The spectrum of $T^n=\mathbb{R}^n/{\Lambda_n}$ is 
$$\mathrm{Spec}(T^n)=\Big{\{}4\pi^2|\xi|^2\,\Big{|}\,\xi\in \Lambda_n^*\Big{\}},$$
and $e^{2\pi\langle\xi,u \rangle i}$ is an eigenfunction corresponding to the eigenvalue $4\pi^2|\xi|^2$, where
$u=(u_1,u_2,\cdots,u_n)$
is the 
coordinates of $\mathbb{R}^n$, such that the flat metric on $\mathbb{R}^n$ ($T^n$) can be written as $du_1^2+d u_2^2+\cdots+d u_n^2$. 
\subsection{Minimal homogeneous flat tori in spheres}

Let $T^n=\mathbb{R}^n/{\Lambda_n}$ be a flat torus. Assume $n$ is an eigenvalue of this torus, whose eigenspace is of dimension $2N$. It follows that there are exactly $N$ distinct lattice vectors  (up to $\pm1$) having the length $\frac{n}{4\pi^2}$ in the dual lattice $\Lambda_n^*$, which  are denoted by
$$\xi_1,\xi_2,\cdots,\xi_N.$$
By the theorem of Takahashi, any minimal isometric immersion of $T^n$ in spheres can be expressed as follows:
\beq\label{eq-X}
x=\bgm\Q_1&\Q_2&\cdots&\Q_N\edm A:T^n=\mathbb{R}^n/{\Lambda_n} \longrightarrow  \mathbb{S}^{2N-1},
\eeq
where $\Q_r=\bgm\cos\q_r&\sin\q_r\edm,$ $\q_r=、2\pi\langle\xi_r,u\rangle$ for $~1\leq r\leq N$, and $A$ is a $2N\times 2N$ matrix. 
Write
$$AA^t=\bgm A_{11}& A_{12}&\cdots& A_{1N}\\
 A_{21}& A_{22}&\cdots& A_{2N}\\
\vdots&\vdots&\ddots&\vdots\\
 A_{N1}& A_{N2}&\cdots& A_{NN}\edm,~~~ A_{rs}=\bgm A_{rs}^{11}& A_{rs}^{12}\\
 A_{rs}^{21}& A_{rs}^{22}\edm,$$
we have the following conclusions.

\begin{lemma}\label{lem-theta} 
	 $\Big{\{}\q_r\pm\q_s\,\Big{|}\, 1\leq r\neq s\leq N\Big{\}}\cap\Big{\{}0, \pm2\q_j\,\Big{|}\, 1\leq j\leq N\Big{\}}=\varnothing.$ 
\end{lemma}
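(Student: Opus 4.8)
The plan is to translate the stated disjointness of two sets of functions into a statement about the dual lattice vectors $\xi_1,\dots,\xi_N$, and then settle it with a one-line norm computation. First I would observe that for fixed $v,w\in\mathbb{R}^n$ the linear forms $2\pi\langle v,u\rangle$ and $2\pi\langle w,u\rangle$ agree for all $u$ if and only if $v=w$. Since $\theta_r\pm\theta_s=2\pi\langle\xi_r\pm\xi_s,u\rangle$ and $\pm2\theta_j=2\pi\langle\pm2\xi_j,u\rangle$, the asserted intersection is empty precisely when, for all $r\neq s$ and all $j$,
$$
\xi_r\pm\xi_s\neq 0\quad\text{and}\quad \xi_r\pm\xi_s\neq\pm2\xi_j.
$$
The crucial structural input, which I would record at the outset, is that all the chosen dual vectors share a common length: $|\xi_i|^2=\tfrac{n}{4\pi^2}=:\ell^2$ for every $i$, since each $\xi_i$ realizes the eigenvalue $n=4\pi^2|\xi_i|^2$.

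For the part involving $0$, I would simply invoke that $\xi_1,\dots,\xi_N$ are pairwise distinct up to sign, being the $N$ representatives of the shortest dual vectors modulo $\pm1$. Hence $\xi_s\neq\pm\xi_r$ whenever $r\neq s$, which gives $\xi_r\pm\xi_s\neq 0$ immediately.

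The part involving $\pm2\xi_j$ is where the common length enters. Suppose, for contradiction, that $\xi_r+\xi_s=\pm2\xi_j$ for some $r\neq s$. Taking norms and using $|\xi_j|=\ell$ gives $|\xi_r+\xi_s|^2=4\ell^2$; expanding the left-hand side as $2\ell^2+2\langle\xi_r,\xi_s\rangle$ forces $\langle\xi_r,\xi_s\rangle=\ell^2=|\xi_r|\,|\xi_s|$, so the Cauchy--Schwarz inequality is saturated and $\xi_r=\xi_s$, contradicting distinctness. The case $\xi_r-\xi_s=\pm2\xi_j$ is identical: the norm condition now reads $|\xi_r-\xi_s|^2=4\ell^2$, forcing $\langle\xi_r,\xi_s\rangle=-\ell^2$ and hence $\xi_r=-\xi_s$, again a contradiction. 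Note that the argument never uses which $\xi_j$ appears; only the length $|2\xi_j|=2\ell$ matters, so the subcases $j=r$ or $j=s$ require no separate treatment.

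I do not expect a genuine obstacle here: once the problem is phrased in terms of the vectors $\xi_i$, everything follows from the single fact that these vectors have equal length, together with the strictness of Cauchy--Schwarz. The only point needing a little care is the reduction in the first step---checking that equality of the trigonometric frequencies, as they arise in the expansion of the entries of $AA^t$, is the intended reading of the set-theoretic statement---but this is routine and carries no real difficulty.
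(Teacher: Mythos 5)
Your proof is correct and is essentially the paper's argument: the paper disposes of the lemma with the single inequality $0<|\xi_r\pm\xi_s|<|2\xi_j|$ for $r\neq s$, whose two sides are exactly your distinctness-up-to-sign observation and your saturated Cauchy--Schwarz computation (the latter being just the equality case of the triangle inequality, unpacked). The only cosmetic slip is calling the $\xi_i$ ``shortest'' vectors---in this lemma they are merely the lattice vectors of common length realizing the eigenvalue $n$---but your argument uses only equal lengths and pairwise distinctness modulo $\pm 1$, so nothing is affected.
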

	\begin{proof}
		This follows from the fact that
		$0<|\xi_r\pm\xi_s|<|2\xi_j|$ for $r\neq s$.
	\end{proof}

\begin{lemma}\label{lem-alprr} For every $r$, $1\leq r\leq N$, we have
	$$ A_{rr}=\bgm a_r&\\&a_r\edm.$$
\end{lemma}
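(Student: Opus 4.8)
The plan is to exploit the single fact that the image of $x$ lies in the unit sphere, i.e.\ $|x|^2\equiv 1$, and to read off the claim from the Fourier expansion of this identity. Writing $\Theta=\bgm\Q_1&\cdots&\Q_N\edm$ so that $x=\Theta A$, the sphere condition becomes
\[
\Theta\,(AA^t)\,\Theta^t=\sum_{r,s=1}^N \Q_r A_{rs}\Q_s^t\equiv 1 .
\]
First I would expand each scalar $\Q_r A_{rs}\Q_s^t$ using the product-to-sum formulas. The point is that this turns the whole sum into a trigonometric polynomial whose frequencies lie in $\{0\}\cup\{\pm2\xi_j\}\cup\{\pm(\xi_r\pm\xi_s):r\neq s\}$: the diagonal blocks $A_{rr}$ produce the constant together with $\cos2\q_r$ and $\sin2\q_r$ (frequency $\pm2\xi_r$), while the off-diagonal blocks produce $\cos(\q_r\pm\q_s)$ and $\sin(\q_r\pm\q_s)$ (frequencies $\pm(\xi_r\pm\xi_s)$). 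Note that only the sphere condition is used here, not minimality or the isometric property.

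The key step is to isolate the frequency $\pm2\xi_r$. By Lemma~\ref{lem-theta} no off-diagonal combination $\xi_a\pm\xi_b$ (with $a\neq b$) equals $\pm2\xi_r$, and since the $\xi_j$ are pairwise distinct up to sign we also have $2\xi_s\neq\pm2\xi_r$ for $s\neq r$; hence the only contribution to this frequency comes from the single block $A_{rr}$. Because the characters $e^{2\pi i\langle\eta,u\rangle}$ attached to distinct $\eta$ are linearly independent on $T^n$, both the $\cos2\q_r$-coefficient and the $\sin2\q_r$-coefficient of $\Theta(AA^t)\Theta^t$ must vanish. A direct computation of $\Q_r A_{rr}\Q_r^t$ gives these coefficients as $\tfrac12(A_{rr}^{11}-A_{rr}^{22})$ and $\tfrac12(A_{rr}^{12}+A_{rr}^{21})$ respectively, so
\[
A_{rr}^{11}=A_{rr}^{22},\qquad A_{rr}^{12}+A_{rr}^{21}=0 .
\]

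Finally I would combine this with the symmetry of $AA^t$: since $AA^t$ is symmetric, each diagonal block $A_{rr}$ is symmetric, i.e.\ $A_{rr}^{12}=A_{rr}^{21}$. Together with $A_{rr}^{12}+A_{rr}^{21}=0$ this forces $A_{rr}^{12}=A_{rr}^{21}=0$, and with $A_{rr}^{11}=A_{rr}^{22}=:a_r$ we obtain $A_{rr}=a_r I_2$, as claimed. I do not expect a genuine obstacle here: the only subtle ingredient is the guarantee that the frequency $2\xi_r$ is not accidentally hit by an off-diagonal term, and that is exactly what Lemma~\ref{lem-theta} supplies; the remainder is routine trigonometric bookkeeping together with the linear independence of characters on the torus.
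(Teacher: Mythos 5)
Your proof is correct and follows essentially the same route as the paper: both expand the sphere condition $|x|^2\equiv 1$ into a trigonometric polynomial, invoke Lemma~\ref{lem-theta} (together with $\theta_r\not\equiv 0$ and the distinctness of the $\xi_j$ up to sign) to isolate the frequency $\pm 2\xi_r$, and combine the resulting vanishing of $A_{rr}^{11}-A_{rr}^{22}$ and $A_{rr}^{12}+A_{rr}^{21}$ with the symmetry $A_{rr}^{12}=A_{rr}^{21}$ of $AA^t$. No gap; your write-up is if anything more explicit than the paper's about why no off-diagonal term can contribute to the frequency $2\xi_r$.
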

\begin{proof}
	By definition, we have $ A_{rr}^{12}= A_{rr}^{21}$.
From $|x|=1$, we can obtain that
	\begin{equation*}
		\begin{split}
			\frac{1}{2}=&\sum_{q}[( A_{qq}^{11}- A_{qq}^{22})\cos(2\q_q)+( A_{qq}^{11}+ A_{qq}^{22})+( A_{qq}^{12}+ A_{qq}^{21})\sin(2\q_q)
			+\\
			&\sum_{r\neq s}[( A_{rs}^{11}- A_{rs}^{22})\cos(\q_r+\q_s)+( A_{rs}^{11}+ A_{rs}^{22})\cos(\q_r-\q_s)+\\&~~~~~~
			( A_{rs}^{12}+ A_{rs}^{21})\sin(\q_r+\q_s)
			+( A_{rs}^{21}- A_{rs}^{12})\sin(\q_r-\q_s)].
	\end{split}\end{equation*}
	Note that for all $1\leq r\leq N$, $\theta_r\not\equiv0$. So it follows from the Lemma~\ref{lem-theta}  that $ A_{rr}^{11}- A_{rr}^{22}=0,~ A_{rr}^{12}+ A_{rr}^{21}=0,$
	which complete the proof.
\end{proof}
	Since $x$ is an isometric immersion, we have
	\beq\label{eq-XX}
	\langle\frac{\partial x}{\partial u_k},\frac{\partial x}{\partial u_l}\rangle=\delta_{kl},~~~1\leq k,l\leq n,
	\eeq
	where $u=(u_1, u_2, \cdots, u_n)$ is the coordinates of $\mathbb{R}^n$. Using the expression \eqref{eq-X} of $x$, \eqref{eq-XX} can be rewritten as below:
	$$\sum_{r}\xi_{rk}\xi_{rl}\Q_r\bgm0&1\\-1&0\edm A_{rr}\bgm0&-1\\1&0\edm\Q_r^t+\sum_{r<s}(\xi_{rk}\xi_{sl}+\xi_{sk}\xi_{rl})\Q_r\bgm0&1\\-1&0\edm A_{rs}\bgm0&-1\\1&0\edm\Q_s^t=\frac{\delta_{kl}}{4\pi^2},$$
    where $\xi_{rk}$ is the $k$-th coordinates of $\xi_r$, and we have used the fact that
    $$\Q_r\bgm0&1\\-1&0\edm A_{rs}\bgm0&-1\\1&0\edm\Q_s^t=\Q_s\bgm0&1\\-1&0\edm A_{sr}\bgm0&-1\\1&0\edm\Q_r^t,$$
    which can be verified easily.
	It follows from Lemma~\ref{lem-alprr} that the first term in the left hand side of the above equation is constant. Hence for all $1\leq k,l\leq n$, we have
	\beq\label{eq-alprs}
	\begin{split}
		0=\sum_{r<s}(\xi_{rk}\xi_{sl}+\xi_{sk}\xi_{rl})[&( A_{rs}^{11}- A_{rs}^{22})\cos(\q_r+\q_s)+( A_{rs}^{11}+ A_{rs}^{22})\cos(\q_r-\q_s)\\+
		&( A_{rs}^{12}+ A_{rs}^{21})\sin(\q_r+\q_s)
		+( A_{rs}^{21}- A_{rs}^{12})\sin(\q_r-\q_s)].
	\end{split}
	\eeq
 Define $\mathcal{E}=\{\xi_r\pm\xi_s,1\leq r<s\leq n\}$, we call the set of pairs  
 \[\{(\xi_{r_1},\xi_{s_1}), \cdots,(\xi_{r_p},\xi_{s_p}), (\xi_{r_{p+1}}, -\xi_{s_{p+1}}), \cdots, (\xi_{r_q}, -\xi_{s_q})\}\]
 $\eta$-set if
    $$\xi_{r_1}+\xi_{s_1}=\cdots=\xi_{r_p}+\xi_{s_p}=\xi_{r_{p+1}}-\xi_{s_{p+1}}=\cdots=\xi_{r_q}-\xi_{s_q}=\eta\in \mathcal{E}.$$
    Using $|\xi_{r_j}|=|\xi_{s_j}|$ we have
    $$\langle \eta, \eta\rangle=|\xi_{r_j}|^2+|\xi_{s_j}|^2\pm 2\langle\xi_{r_j},\xi_{s_j}\rangle=2\langle \xi_{r_j},\eta\rangle>0,\quad 1\leq j\leq q.$$
    It is straightforward to verify that $\pm\xi_{r_1},\cdots,\pm\xi_{r_q}, \pm\xi_{s_1},\cdots,\pm\xi_{s_q}$ are distinct with each other.

    Denote by  $\xi_{r_{j}}\odot\xi_{s_{j}}$ the symmetric product of $\xi_{r_{j}}$ and $\xi_{s_j}$, we have the following lemma.
\begin{lemma}\label{lem-homo}
	Let $x:T^n=\mathbb{R}^n/{\Lambda_n} \longrightarrow  \mathbb{S}^{ m}$ be a linearly full minimal flat torus. If for any $\eta\in\mathcal{E}$, the $\eta$-set forms a linearly independent set of symmetric products
 $\{
\xi_{r_1}\odot\xi_{s_1}, \xi_{r_2}\odot\xi_{s_2}, \cdots,\xi_{r_q}\odot\xi_{s_q}\}$,
	then $ m$ is odd and $x$ is homogeneous.
\end{lemma}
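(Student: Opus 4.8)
The plan is to treat the isometry identity \eqref{eq-alprs} as an equality of functions on the torus, holding for every pair $(k,l)$ and every $u$, and to decouple it by Fourier frequency. The functions $\cos(2\pi\langle\eta,u\rangle)$ and $\sin(2\pi\langle\eta,u\rangle)$ attached to distinct frequencies $\eta\in\mathcal{E}$ are linearly independent, so \eqref{eq-alprs} splits into one scalar identity in $(k,l)$ for the $\cos$-part and one for the $\sin$-part of each $\eta$. The key observation is that the prefactor satisfies $\xi_{rk}\xi_{sl}+\xi_{sk}\xi_{rl}=2(\xi_r\odot\xi_s)_{kl}$, so letting $k,l$ range turns each decoupled identity into a single linear relation $\sum_j c_j\,(\xi_{r_j}\odot\xi_{s_j})=0$ among the symmetric products indexed by the $\eta$-set, where each coefficient $c_j$ is a prescribed entry-combination of the block $A_{r_js_j}$.

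Next I would invoke the hypothesis. Since for every $\eta\in\mathcal{E}$ the symmetric products of the $\eta$-set are linearly independent (and each $\xi_r\odot\xi_s\neq0$ because distinct shortest vectors are non-parallel), every coefficient $c_j$ must vanish. Reading these off for a fixed off-diagonal pair $(r,s)$: applying the hypothesis to $\eta=\xi_r+\xi_s$, the $\cos$- and $\sin$-relations give $A_{rs}^{11}=A_{rs}^{22}$ and $A_{rs}^{12}=-A_{rs}^{21}$; applying it to $\eta'=\xi_r-\xi_s$, where the same pair enters as a difference pair, gives $A_{rs}^{11}=-A_{rs}^{22}$ and $A_{rs}^{12}=A_{rs}^{21}$. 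Because $\xi_s\neq0$ forces $\xi_r+\xi_s\neq\pm(\xi_r-\xi_s)$, the two frequencies are genuinely distinct and both sets of constraints apply, so all four entries vanish and $A_{rs}=0$. Together with Lemma~\ref{lem-alprr}, which already gives $A_{rr}=a_rI_2$, this shows $AA^t=\mathrm{diag}(a_1I_2,\dots,a_NI_2)$ with $a_r\geq0$.

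From the block-diagonal form both conclusions follow quickly. Linear fullness forces $\mathrm{rank}(AA^t)=m+1$, and this rank equals $2\cdot\#\{r:a_r\neq0\}$, so $m+1$ is even and $m$ is odd. Moreover, two matrices with the same Gram matrix differ by a right orthogonal factor, so after an orthogonal change of coordinates in the target $x$ is congruent to the product immersion $u\mapsto\big(\sqrt{a_r}\cos\theta_r,\sqrt{a_r}\sin\theta_r\big)_{a_r\neq0}$. On this model the translation $u\mapsto u+u_0$ acts as the block rotation $\mathrm{diag}\big(R(2\pi\langle\xi_r,u_0\rangle)\big)\in SO(m+1)$, i.e.\ by an ambient isometry, and these isometries act transitively on the image; hence $x$ is homogeneous.

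The \emph{main obstacle} I anticipate is the Fourier bookkeeping in the first step: one must verify exactly which of the four terms in \eqref{eq-alprs}, and which pairs $(r,s)$, feed a given $\cos(2\pi\langle\eta,u\rangle)$ or $\sin(2\pi\langle\eta,u\rangle)$, keeping track that $\cos$ is even while $\sin$ is odd (so $\pm\eta$ must be matched with the correct sign) and that coincidences such as $\xi_r+\xi_s=\xi_a\pm\xi_b$, or resonances like $\xi_r+\xi_s=\pm\xi_j$, are correctly absorbed into the same $\eta$-set rather than producing spurious cross-terms. Here the auxiliary fact $\theta_r\pm\theta_s\not\equiv0,\pm2\theta_j$ from Lemma~\ref{lem-theta} is precisely what guarantees that these degree-two frequencies never collide with the constant term or with the $2\theta_j$-frequencies of the diagonal blocks, so that the decoupling is clean; once this is justified, the linear-independence hypothesis makes the remainder essentially mechanical.
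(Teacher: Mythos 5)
Your proposal is correct and follows essentially the same route as the paper's proof: decoupling the isometry identity \eqref{eq-alprs} frequency by frequency, recognizing the coefficient of each $\cos(2\pi\langle\eta,u\rangle)$ and $\sin(2\pi\langle\eta,u\rangle)$ as a linear relation $\sum_j t_{r_js_j}\,\xi_{r_j}\odot\xi_{s_j}=0$ among the symmetric products of the $\eta$-set, and invoking the hypothesis to conclude $A_{rs}=0$ for $r\neq s$, exactly as in \eqref{eq-line}. The only cosmetic difference is in the final normalization — you use a polar-type factorization (``same Gram matrix differ by a right orthogonal factor'') where the paper runs a QR decomposition of $A^t$ — but both yield the same diagonal model $\Theta\,\mathrm{diag}(\sqrt{a_1},\sqrt{a_1},\dots)$, the parity of $m$ from the even rank of $AA^t$, and homogeneity via the transitive block-rotation action.
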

\begin{proof}
    Set
    $$t_{r_{a}s_a}\triangleq A_{r_{a}s_{a}}^{11}- A_{r_{a}s_{a}}^{22},~1\leq a\leq p,~~~t_{r_{b}s_b}\triangleq A_{r_{b}s_{b}}^{11}+ A_{r_{b}s_{b}}^{22},~p+1\leq b\leq q.$$
    By the linear independence of trigonometric functions, it follows from \eqref{eq-alprs} that $\{t_{r_{1}s_{1}},\cdots,t_{r_{q}s_{q}}\}$ satisfy the following linear equations:
    \beq\label{eq-line}
    (\xi_{r_{1}j}\xi_{s_{1}k}+\xi_{r_{1}k}\xi_{s_{1}j})t_{r_{1}s_{1}}+(\xi_{r_{2}j}\xi_{s_{2}k}+\xi_{r_{2}k}\xi_{s_{2}j})t_{r_{2}s_{2}}+\cdots+(\xi_{r_{q}j}\xi_{s_{q}k}+\xi_{r_{q}k}\xi_{s_{q}j})t_{r_{q}s_{q}}=0,~~1\leq j,k\leq n.
    \eeq

    Note that \eqref{eq-line} is equivalent to
    $$t_{r_{1}s_{1}}\xi_{r_{1}}\odot\xi_{s_{1}}+t_{r_{2}s_{2}}\xi_{r_{2}}\odot\xi_{s_{2}}+\cdots+t_{r_{q}s_{q}}\xi_{r_{q}}\odot\xi_{s_{q}}=0.$$
    It implies $t_{r_{j}s_{j}}=0$ for all $1\leq j\leq m$, since these symmetric products are linearly independent. Similarly, the other coefficients also vanish in \eqref{eq-alprs} and we have $ A_{rs}=0~ (r\not= s)$.

    By embedding $\mathbb{S}^{ m}$ into $\mathbb{S}^{2N-1}$, we can assume the immersion $x$ has the form as given in \eqref{eq-X}.
	From Lemma~\ref{lem-alprr} and $ A_{rs}=0~ (r\not= s)$,  the matrix $A$ in the expression \eqref{eq-X} satisfies
	$$
	AA^t=\text{diag}\Big\{a_1, a_1, a_2, a_2, \cdots, a_N, a_N\Big\}
	,
	$$
	with
	$$a_1+a_2+\cdots+a_N=1,~~a_1 \geq a_2\geq\cdots\geq a_N\geq 0,~1\leq j\leq N.$$
	
	Consider the QR decomposition of $A^t$, there exists an upper triangular matrix $L$ such that
	$$
	L^tL=\text{diag}\Big\{a_1, a_1, a_2, a_2, \cdots, a_N, a_N\Big\}
	,
	$$
	which implies $L$ must be diagonal, i.e.,
	$$L=\text{diag}\Big{\{}\sqrt{a_1},\sqrt{a_1},\sqrt{a_2},\sqrt{a_2},\cdots\cdots,\sqrt{a_N},\sqrt{a_N}\,\Big{\}}.$$
	Hence, up to an orthogonal transformation, $x$ can be expressed as below:
	$$x=\bgm\Q_1&\Q_2&\cdots&\Q_N\edm \text{diag}\Big{\{}\sqrt{a_1},\sqrt{a_1},\sqrt{a_2},\sqrt{a_2},\cdots,\sqrt{a_N},\sqrt{a_N}\,\Big{\}}.$$
	Since $x$ is linearly full, we can obtain that $ m$ is odd, and $x$ is the orbit of a torus group acting at	$$(\sqrt{a_1},\sqrt{a_1},\sqrt{a_2},\sqrt{a_2},\cdots,\sqrt{a_\frac{ m+1}{2}},\sqrt{a_\frac{ m+1}{2}}).$$
\end{proof}

\begin{definition}\label{def-unimo}

A finite set $V$ of rank $k$ in $\mathbb{R}^n$ is called to satisfy the unimodular condition, if all the $k$-dimensional parallelepipeds spanned in $V$ have a same volume.
\end{definition}
\begin{remark}
Suppose $V$ is a finite set of rank $k$ and satisfies the unimodular condition, then for $\{v_1, v_2, \cdots, v_{l}\}$ and  $\{w_1, w_2, \cdots, w_{l}\}$, which are two arbitrary collections of $l$ vectors in $V$ with $l\leq k$, there must have
$$v_1\wedge v_2\wedge \cdots \wedge v_{l}=\pm w_1\wedge w_2\wedge \cdots \wedge w_{l}$$
when $\mathrm{Span}_\mathbb{R}\{v_1, v_2, \cdots, v_{l}\}= \mathrm{Span}_\mathbb{R}\{w_1, w_2, \cdots, w_{l}\}$.
\end{remark}
\begin{lemma}\label{lem:unimo}
	If $\{\xi_1,\xi_2, \cdots, \xi_N\}$ satisfies the unimodular condition, then for any $\eta\in\mathcal{E}$, the $\eta$-set forms a linearly independent set $\{\xi_{r_1}\odot\xi_{s_1}, \xi_{r_2}\odot\xi_{s_2}, \cdots,\xi_{r_q}\odot\xi_{s_q}\}$.
\end{lemma}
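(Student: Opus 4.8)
The plan is to peel each pair in the $\eta$-set down to a single "transverse" vector, reduce the desired independence of symmetric products to the linear independence of those vectors, and finally squeeze that independence out of the unimodular condition by a volume comparison.

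\emph{Step 1 (normal form of a pair).} Fix $\eta\in\mathcal{E}$ and a pair in the $\eta$-set, say $\xi_{r_j}\pm\xi_{s_j}=\eta$; I would introduce the complementary combination $w_j:=\xi_{r_j}\mp\xi_{s_j}$. Writing $R^2$ for the common squared length of the shortest vectors, the identity $\langle w_j,\eta\rangle=|\xi_{r_j}|^2-|\xi_{s_j}|^2=0$ shows $w_j\perp\eta$, and $|\eta|^2+|w_j|^2=2|\xi_{r_j}|^2+2|\xi_{s_j}|^2=4R^2$ shows $|w_j|^2=4R^2-|\eta|^2=:\rho^2$ is the same for every $j$; the $\pm w_j$ are pairwise distinct because the $\pm\xi_{r_j},\pm\xi_{s_j}$ are. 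Since $\xi_{r_j}=\tfrac12(\eta+w_j)$ and $\xi_{s_j}=\pm\tfrac12(\eta-w_j)$, polarization gives $\xi_{r_j}\odot\xi_{s_j}=\pm\tfrac14(\eta\odot\eta-w_j\odot w_j)$ and $\xi_{r_j}\wedge\xi_{s_j}=\pm\tfrac12\,\eta\wedge w_j$. So the whole $\eta$-set is encoded by the equal-length vectors $w_1,\dots,w_q\in\eta^\perp$, and the planes $\mathrm{span}(\xi_{r_j},\xi_{s_j})$ form a pencil through the axis $\mathbb{R}\eta$.

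\emph{Step 2 (reduction to independence of the $w_j$).} Given a relation $\sum_j t_j\,\xi_{r_j}\odot\xi_{s_j}=0$, absorbing the signs into new coefficients $c_j$ turns it into $(\sum_j c_j)\,\eta\odot\eta=\sum_j c_j\,w_j\odot w_j$. Evaluating both symmetric $2$-tensors on $(\eta,\eta)$ and using $w_j\perp\eta$ annihilates the right-hand side, so $\sum_j c_j=0$ and then $\sum_j c_j\,w_j\odot w_j=0$. Because the squares of linearly independent vectors are linearly independent in $\mathrm{Sym}^2$, it is enough to prove that $w_1,\dots,w_q$ are linearly independent; equivalently, that the decomposable $2$-forms $\eta\wedge w_j$ are independent. (Note this will also show automatically that an $\eta$-set has at most $n-1$ pairs.)

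\emph{Step 3 (using the unimodular condition).} To show the $w_j$ carry no linear relation I would argue by contradiction: take a minimal relation, a circuit $w_{j_1},\dots,w_{j_m}$ spanning an $(m-1)$-dimensional $U\subseteq\eta^\perp$. Then the $2m$ shortest vectors $\pm\xi_{r_{j_a}},\pm\xi_{s_{j_a}}$ all lie in the $m$-dimensional space $\widetilde U=\mathbb{R}\eta\oplus U$. Choosing coordinates with $\eta$ on one axis, for each index one may use $\xi_{r_{j_a}}$ or $\xi_{s_{j_a}}$, i.e.\ flip $w_{j_a}\mapsto-w_{j_a}$; a short determinant computation shows the resulting $m$-volume of this basis of $\widetilde U$ is, up to a nonzero factor independent of the signs, the $(m-1)$-dimensional volume of the simplex with vertices $(\pm w_{j_1},\dots,\pm w_{j_m})$. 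Since $m=\dim\widetilde U\le\operatorname{rank}\{\xi_i\}$, the Remark following Definition~\ref{def-unimo} forces all $2^m$ of these volumes to coincide. Comparing single sign-flips then yields a homogeneous trigonometric system in the mutual angles of the $w_{j_a}$ on the sphere of radius $\rho$ in $U$, whose only solution makes two of them coincide up to sign, contradicting their distinctness. Hence no circuit exists, the $w_j$ are independent, and by Step~2 the symmetric products are independent.

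The main obstacle is precisely the endgame of Step~3: converting the many volume equalities handed over by the unimodular condition into a genuine contradiction. The case $m=3$ is already convincing—the three single-flip identities combine to force $\sin(\phi_a-\phi_b)=0$ for each pair, i.e.\ $w_{j_a}=\pm w_{j_b}$—and the general circuit should yield to the same sign-flip comparison; writing this uniformly in $m$, and separating the degenerate affine case $\sum_a d_a=0$ (where the chosen vectors may fail to span $\widetilde U$) from the generic one, is the delicate part that requires care.
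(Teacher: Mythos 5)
Your Steps 1 and 2 are correct, and your target is in fact exactly equivalent to the paper's key claim: since $\xi_{r_j}=\frac12(\eta+w_j)$ with $w_j\perp\eta$, linear independence of $w_1,\dots,w_q$ is the same statement as independence of $\eta,\xi_{r_1},\dots,\xi_{r_q}$, which is what the paper proves. From there the routes genuinely diverge. The paper argues by contradiction on a coordinate-wise dependence $\xi_{r_t}=\sum_j c_j\xi_{r_j}$ (or $\eta=\sum_j c_j\xi_{r_j}$): unimodularity, applied to wedges in which each $\xi_{s_j}=\pm(\eta-\xi_{r_j})$ is swapped in one at a time, forces every $c_j$ and $c_j-1$ into $\{0,\pm1\}$, while pairing with $\eta$ and using $\langle\xi_{r_j},\eta\rangle=\frac12\langle\eta,\eta\rangle$ gives $\sum_j c_j=1$ (resp.\ $2$); this pins the dependence down to $\xi_{r_t}=\xi_{r_j}$ (resp.\ $\eta=\xi_{r_1}+\xi_{r_2}$, i.e.\ $\xi_{r_2}=\pm\xi_{s_2}$), contradicting distinctness. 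You instead convert unimodularity into an equal-volume constraint over sign patterns on a circuit. Your version buys a cleaner passage from vector independence to independence of the symmetric products (the polarization identity $\xi_{r_j}\odot\xi_{s_j}=\pm\frac14(\eta\odot\eta-w_j\odot w_j)$ is slicker than the paper's basis extension) and yields $q\le n-1$ for free; the paper's version only ever compares two wedges at a time and so needs less bookkeeping.

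The one unfinished point is the endgame of Step 3, which you flag yourself: the trigonometric formulation is only carried out for $m=3$ and does not literally make sense in higher rank. But it closes uniformly in $m$, in two lines, once phrased in circuit coefficients instead of angles. Write the minimal relation as $\sum_a d_a w_{j_a}=0$; replacing $w_{j_a}$ by $-w_{j_a}$ (i.e.\ swapping the roles of $\xi_{r_{j_a}}$ and $\xi_{s_{j_a}}$) you may take all $d_a>0$, and $m\ge3$ since $m=2$ would force $w_{j_1}=\pm w_{j_2}$ (equal lengths $\rho>0$), against the distinctness you established in Step 1. Your determinant computation says
\[
\bigl|\,v_1\wedge\cdots\wedge v_m\bigr|=\kappa\Bigl|\sum_{a=1}^m\epsilon_a d_a\Bigr|,\qquad v_a=\tfrac12\bigl(\eta+\epsilon_a w_{j_a}\bigr),
\]
with $\kappa>0$ independent of $\epsilon$; here $\kappa\neq0$ because every proper subset of a circuit is linearly independent, so the $(m-1)$-fold wedges omitting one $w_{j_a}$ are nonzero and proportional to the $d_a$. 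Put $S=\sum_a d_a>0$. The all-plus pattern is nondegenerate, so by the Remark following Definition~\ref{def-unimo} every nondegenerate pattern has volume $\kappa S$, while degenerate patterns span less than $\widetilde U$ and impose no constraint — this dissolves the "degenerate affine case" you worried about. Now a single flip at $a$ gives $|S-2d_a|\in\{0,S\}$; the value $S$ would force $d_a\in\{0,S\}$, both impossible (the $d_b$ are positive and, for $m\ge2$, cannot have $\sum_{b\ne a}d_b=0$), so $d_a=S/2$ for every $a$, whence $S=mS/2$ and $m=2$, a contradiction. Thus only the $m$ single-flip comparisons are needed, with no trigonometry and no case division, and your proof is complete.
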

\begin{proof}

    We claim that $\eta, \xi_{r_1}, \xi_{r_2}, \cdots, \xi_{r_q}$ are linearly independent.
    Then 
    by extending $\{\eta, \xi_{r_1}, \xi_{r_2}, \cdots, \xi_{r_q}\}$ to a basis of $\mathbb{R}^n$, it is easy to see that
   $$
       \xi_{r_1}\odot\xi_{r_1},~~ \xi_{r_2}\odot\xi_{r_2},~ \cdots,~~\xi_{r_q}\odot\xi_{r_q},~~\xi_{r_1}\odot\eta,~~\xi_{r_2}\odot\eta,~ \cdots,~~\xi_{r_q}\odot\eta
   $$
   are linearly independent. 
   Combining this with $\xi_{s_j}=\pm(\eta-\xi_{r_j})$, we can derive that
    \begin{equation*}
    \xi_{r_1}\odot\xi_{s_1},~~ \xi_{r_2}\odot\xi_{s_2},~ \cdots,~~\xi_{r_q}\odot\xi_{s_q}
    \end{equation*}
    are linearly independent.
    
    Now it suffices to prove the above claim. We prove it by contradictions. First we extend $\eta$ to a
    maximal linearly independent subset in $\{\eta, \xi_{r_1}, \xi_{r_2}, \cdots, \xi_{r_q}\}$, which can be assumed  $\{\eta, \xi_{r_1}, \xi_{r_2}, \cdots, \xi_{r_{t-1}}\}$ with $t<q+1$. Then $\eta, \xi_{r_1}, \xi_{r_2}, \cdots, \xi_{r_{t-1}}, \xi_{r_{t}}$ must be linearly dependent. Now it is left to discuss two cases.

    The first case is that $\xi_{r_1}, \xi_{r_2}, \cdots, \xi_{r_{t-1}}, \xi_{r_t}$ are linearly dependent. 
    Let
    \beq
    \label{eq-case1}
    \xi_{r_t}=c_1\xi_{r_1}+c_2\xi_{r_2}+\cdots\cdots+c_{t-1}\xi_{r_{t-1}}.
    \eeq
    It follows from the unimodular condition that these coefficients all take values in $\{0,\pm 1\}$. Taking inner product of \eqref{eq-case1} with $\eta$, we can obtain
     \beq
    \label{eq-sum1}
    \sum_{j=1}^{t-1} c_j=1.
     \eeq
    Note that \[
    \xi_{s_t}\wedge\xi_{r_1}\wedge\cdots\wedge\xi_{r_{t-1}}=\pm\,\eta\wedge\xi_{r_1}\wedge\cdots\wedge\xi_{r_{t-1}}\neq 0.
    \]
    For any $1\leq j\leq t-1$, using $\xi_{s_j}=\pm(\eta-\xi_{r_j})$, we have
    $$\xi_{s_t}\wedge\xi_{r_1}\wedge\cdots\wedge\xi_{r_{j-1}}\wedge\xi_{s_j}\wedge\xi_{r_{j+1}}\wedge\cdots\wedge\xi_{r_{t-1}}=\pm(c_j-1)\xi_{s_{t}}\wedge\xi_{r_1}\wedge\cdots\wedge\xi_{r_{t-1}}.$$
    From the unimodular condition again we get
    $$c_j-1\in\{0,\pm1\},~~1\leq j\leq t-1,$$
    which implies $c_j\in\{0,1\}$ for all $1\leq j\leq t-1$. Combining this with \eqref{eq-sum1}, we derive that only one $c_j$ is nonzero and equals $1$. It follows that $\xi_{r_t}=\xi_{r_j}$, contradicting with the fact that $\pm\xi_{r_1},\cdots,\pm\xi_{r_q}, \pm\xi_{s_1},\cdots,\pm\xi_{s_q}$ are distinct with each other.

    The second case is that 
    $\xi_{r_1}, \xi_{r_2}, \cdots, \xi_{r_t}$ are linearly independent. 
    Then we can assume
    \beq
    \label{eq-case2}
    \eta=c_1\xi_{r_1}+c_2\xi_{r_2}+\cdots\cdots+c_t\xi_{r_t}.
    \eeq
    Taking inner product of \eqref{eq-case2} with $\eta$, we can obtain
     \beq
    \label{eq-sum2}
    \sum_{j=1}^t c_j=2.
     \eeq
    On the other hand, using $\xi_{s_j}=\pm(\eta-\xi_{r_j})$, we have $$\xi_{s_j}=\pm\left(c_1\xi_{r_1}+\cdots+c_{j-1}\xi_{r_{j-1}}+(c_j-1)\xi_{r_j}+c_j\xi_{r_{j+1}}+\cdots+c_t\xi_{r_t}\right),~~1\leq j\leq t.$$
    It follows from the unimodular condition that
    $$c_j, c_j-1\in\{0,\pm1\},~~1\leq j\leq t,$$
    which implies $c_j\in\{0,1\}$ for all $1\leq j\leq t$. Combining this with \eqref{eq-sum2}, we derive that only two $c_j$ are nonzero and equal $1$, which can be assumed to be $c_1$ and $c_2$. It follows that $\eta=\xi_{r_1}+\xi_{r_2}$. So we have $\xi_{r_2}=\pm\xi_{s_2}$ also   contradicting with the fact that $\pm\xi_{r_1},\cdots,\pm\xi_{r_q}, \pm\xi_{s_1},\cdots,\pm\xi_{s_q}$ are distinct.
\end{proof}
By Lemma \ref{lem-homo} and \ref{lem:unimo} we immediately obtain the following proposition. 
\begin{proposition}
\label{prop-homo}
	Let $x:T^n=\mathbb{R}^n/{\Lambda_n} \longrightarrow  \mathbb{S}^{ m}$ be a linearly full minimal flat torus. If 
	$\{\xi_1,\xi_2, \cdots, \xi_N\}$ satisfies the unimodular condition, then $ m$ is odd and $x$ is homogeneous.
\end{proposition}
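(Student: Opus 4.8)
The plan is to obtain the proposition as an immediate composition of the two preceding lemmas: Lemma~\ref{lem:unimo} converts the unimodular condition into exactly the linear-independence hypothesis demanded by Lemma~\ref{lem-homo}, after which Lemma~\ref{lem-homo} delivers both conclusions at once. So strictly speaking the proposition is a corollary, and the work of proving it has already been done in the two lemmas; my proposal is mainly to spell out the chaining and identify where the genuine difficulty sits.

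Concretely, I would first apply Lemma~\ref{lem:unimo} to the set $\{\xi_1,\xi_2,\ldots,\xi_N\}$ of shortest dual lattice vectors. Under the unimodular condition this lemma guarantees that, for every $\eta\in\mathcal{E}$, the associated $\eta$-set gives rise to a linearly independent family of symmetric products $\{\xi_{r_1}\odot\xi_{s_1},\ldots,\xi_{r_q}\odot\xi_{s_q}\}$. This is precisely the hypothesis of Lemma~\ref{lem-homo}. I would then feed this into Lemma~\ref{lem-homo}: its proof shows that the isometry system \eqref{eq-alprs}, split by the linear independence of the trigonometric functions into the blocks \eqref{eq-line}, forces every off-diagonal coefficient to vanish, so $A_{rs}=0$ for $r\neq s$. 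Combined with Lemma~\ref{lem-alprr}, which already pins each $A_{rr}$ to a scalar $a_r$ on the diagonal, this makes $AA^t$ diagonal with equal consecutive entries; the Cholesky/QR step then exhibits $x$, up to an orthogonal congruence, as a single torus orbit, whence $x$ is homogeneous, and linear fullness forces $m$ to be odd. This yields the stated conclusion.

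Since the assembly is routine, the substance lies entirely in the two lemmas. The one step I would expect to be delicate—and which is already dispatched inside Lemma~\ref{lem:unimo}—is the central claim that $\eta,\xi_{r_1},\ldots,\xi_{r_q}$ are linearly independent. Its proof is a two-case contradiction argument: the unimodular condition constrains all expansion coefficients to lie in $\{0,\pm1\}$, while pairing any purported linear relation with $\eta$ (using the normalization $2\langle\xi_{r_j},\eta\rangle=\langle\eta,\eta\rangle$) pins the coefficient sums to $1$ or $2$. Tracking how replacing some $\xi_{r_j}$ by $\xi_{s_j}=\pm(\eta-\xi_{r_j})$ alters the relevant wedge products, and again invoking the unimodular condition on those volumes, one forces an equality $\xi_{r_t}=\xi_{r_j}$ or $\eta=\xi_{r_1}+\xi_{r_2}$, both contradicting the distinctness of $\pm\xi_{r_1},\ldots,\pm\xi_{s_q}$. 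This distinctness-and-normalization bookkeeping is where I would concentrate the effort; once it is in place, the passage to the independence of the symmetric products (via extending $\{\eta,\xi_{r_1},\ldots,\xi_{r_q}\}$ to a basis) and the final composition of the lemmas are both straightforward.
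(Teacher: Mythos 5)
Your proposal coincides with the paper's proof: the paper deduces Proposition~\ref{prop-homo} immediately by chaining Lemma~\ref{lem:unimo} (the unimodular condition yields the linear independence of the symmetric products $\xi_{r_j}\odot\xi_{s_j}$ for each $\eta$-set) with Lemma~\ref{lem-homo}, exactly as you describe. Your identification of the genuinely delicate step—the two-case contradiction argument establishing the independence of $\eta,\xi_{r_1},\ldots,\xi_{r_q}$ inside Lemma~\ref{lem:unimo}—also matches where the paper places the real work.
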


\begin{remark}\label{rk-homog}
In Section~\ref{sec-4}, we will show 
that all $\lambda_1$-minimal flat tori of dimension no more than $4$ 
are homogeneous. In fact all of them satisfy the unimodular condition with only one exception.
\end{remark}

\section{Variational characterizations of homogeneous minimal flat tori}\label{sec-variation}
In this section, we give two variational characterizations for homogeneous minimal flat $n$-tori in spheres, from which two construction approaches can be derived. The problems appeared in these two approaches, as well as the further construction to $\lambda_1$-minimal immersions are also discussed.

Let $x: T^n=\mathbb{R}^n/\Lambda_n\rightarrow \mathbb{S}^m$ be a 
homogeneous minimal isometric immersions, with the metric  given by
$$\frac{4\pi^2}{n}(du_1^2+du_2^2+\cdots+du_n^2).$$
Suppose $\{\xi_j\}_{j=1}^{N}$ are all lattice vectors (up to $\pm1$) in $\Lambda_n^{*}$ of length $1$.
From the homogeneity of $x$, we can assume $m=2N-1$ 
and write $x$ 
as follows:
\beq\label{eq-x}
x=(c_1e^{i\q_1},c_2e^{i\q_2},\cdots\cdots,c_Ne^{i\q_N}),
\eeq
where $\q_j=2\pi\langle\xi_j, u\rangle,~1\leq j\leq N$. 
Then we have
\beq\label{eq-flat}
\bgm
\xi_1^t&\xi_2^t&\cdots&\xi_N^t
\edm
\bgm
c_1^2&&&\\
&c_2^2&&\\
&&\ddots&\\
&&&c_N^2
\edm
\bgm
\xi_1\\
\xi_2\\
\vdots\\
\xi_N
\edm=
\frac{1}{n}I_n.
\eeq

Assume $\{\eta_1,\eta_2,\cdots,\eta_n\}$ is a generator of $\Lambda_n^{*}$. Then there exist integers $a_{j_k}$ such that
$$\xi_j=a_{j_1}\eta_1+a_{j_2}\eta_2+\cdots\cdots+a_{j_n}\eta_n,\quad 1\leq j\leq N.$$
We denote
$$A_j=(a_{j_1},a_{j_2},\cdots\cdots,a_{j_n}),\quad Y^t=(A_1^t,\cdots,A_N^t), 
\quad 1\leq j\leq N,$$
and use $Y$ also representing the set of its row vectors if no confusion caused.

Moreover, we set
$$Q=\bgm
\eta_1\\
\eta_2\\
\vdots\\
\eta_n
\edm
\bgm
\eta_1^t&\eta_2^t&\cdots&\eta_n^t
\edm,
$$
which is a positive-definite matrix, called Gram matrix of $\Lambda^*_n$. It is well-known that the volume of $x$ equals $\frac{2^n\pi^{n}}{\sqrt{n^n\det(Q)}}.$
\begin{remark}\label{rk-notation}
The minimal immersion $x$ given in \eqref{eq-x} is uniquely determined by the following data set
$$\{Y ,~ Q,~ (c_1^2, c_2^2, \cdots, c_N^2)\},$$
which is called the {\bf \em matrix data} of $x$, and will be used to present examples in sec~\ref{sec-examples}.
\end{remark}
It is obvious that the condition $|\xi_j|=1$ is equivalent to
\beq\label{eq-AQA}
A_jQA_j^t=1,
\eeq
i.e.,
$A_j$ lies on the hyper-ellipsoid $\mathcal{Q}$ determined by
$$\bgm
x_1&x_2&\cdots&x_n
\edm
Q\bgm
x_1\\x_2\\\vdots\\x_n
\edm=1.
$$
Furthermore, it is straightforward to verify that the flat condition \eqref{eq-flat} is equivalent to
\begin{equation}\label{eq:flat}
c_1^2 A_1^tA_1+c_2^2A_2^tA_2+\cdots\cdots+c_N^2A_N^tA_N=\frac{1}{n}Q^{-1}.
\end{equation}

\subsection{Two variational characterizations}\label{subsec-vara}
Consider the space $S(n)$ of $n\times n$ symmetric matrices over $\mathbb{R}$, which is a $\frac{n(n+1)}{2}$-dimensional Euclidean space endowed with inner product:
$$\langle S_1, S_2\rangle=
\operatorname{tr}(S_1S_2),\quad S_1, S_2\in S(n).$$
For any given vector $v\in \mathbb{R}^n$, $\Pi_a(v):=\{M|\langle v^tv , M\rangle=vMv^t=a,a\in \mathbb{R}\}$ is an affine hyperplane dividing $S(n)$ into two half spaces:
$$S^+_a(v)=\{M|vMv^t\geq a\}, ~~~S^-_a(v)=\{M|vMv^t\leq a\}.$$
Let $\Sigma$ be the set of semi-positive definite matrices. Then it is well known that
$\Sigma
=\cap_{v\in \mathbb{R}^n} S^+_0(v)$
is a convex cone in $S(n)$ with the set of positive definite matrices as its interior, which is denoted  by $\Sigma_+$. By our definition, we have $Q^{-1}\in \Sigma_+$, and $ A_j^t A_j\in \Sigma, 1\leq j\leq N$.

Given a subset $X\subset\mathbb{Z}^n$, let $C_X$  be the convex hull spanned by $A^tA$ for all $A\in X$, and $V_X$  be the  affine subspace  $\cap_{A\in X}\Pi_{{1}}(A)\subset S(n)$.
Moreover, we also consider the smooth linear submanifold $W_X= V_X\cap \Sigma_+$ (see Figure~\ref{fig:WX}), whose geometric meaning is the set of all hyper-ellipsoids passing through $X$.
\begin{figure}[H]
	\centering
	\includegraphics[width=0.5\textwidth]{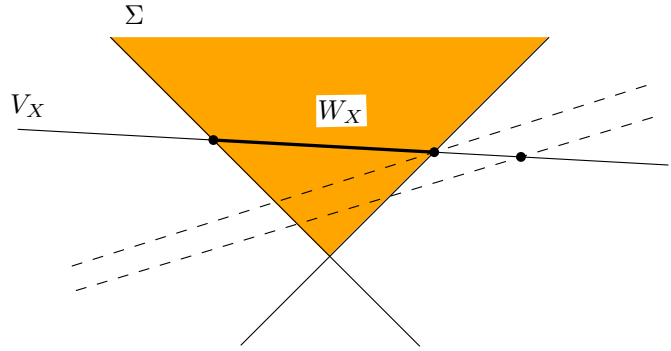}
	\caption{$W_X$ in $\Sigma_+$}\label{fig:WX}
\end{figure}

With respect to these notations, the condition \eqref{eq-AQA} is equivalent to $Q\in W_Y$,
and \eqref{eq:flat} is equivalent to
$\frac{Q^{-1}}{n}\in C_Y$.

\begin{theorem}\label{thm-variation}
Let $x: T^n=\mathbb{R}^n/\Lambda_n\rightarrow \mathbb{S}^{2N-1}$ be an isometric homogeneous immersion. If $x$ is minimal, then $Q$ is a critical point of the  determinant function restricted on $W_Y$, 
where $Y$ is the set of integer vectors determined by $x$, and $Q$ the Gram matrix of $\Lambda_n^*$ under some chosen generator.

Conversely, given a set $X$ of integer vectors, if $Q$ is a critical point of the determinant function restricted on $W_X$ and $\frac{Q^{-1}}{n}$ lies in the convex hull $C_X$, then the torus $\mathbb{R}^n/\Lambda_n$ determined by $Q^{-1}$ (as the Gram matrix of $\Lambda_n$) admits an isometric minimal immersion in $\mathbb{S}^{2N-1}$.
\end{theorem}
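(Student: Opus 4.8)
The plan is to recognize Theorem~\ref{thm-variation} as a Lagrange-multiplier statement on the affine slice $W_Y = V_Y \cap \Sigma_+$. First I would establish the forward direction. Given that $x$ is a minimal immersion, the flat condition \eqref{eq:flat}, namely $\sum_j c_j^2 A_j^t A_j = \frac{1}{n}Q^{-1}$ with all $c_j^2 \geq 0$ (and by homogeneity $\sum c_j^2 = 1$), must hold simultaneously with $Q \in W_Y$. The key computation is the first variation of $\log\det$ on the symmetric matrices: for a curve $Q(t)$ in $W_Y$ with $Q(0)=Q$ and $\dot Q(0) = S$, one has $\frac{d}{dt}\log\det Q(t)\big|_{t=0} = \operatorname{tr}(Q^{-1}S) = \langle Q^{-1}, S\rangle$. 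Thus $Q$ is critical for $\det$ (equivalently $\log\det$) on $W_Y$ precisely when $Q^{-1}$ is orthogonal to the tangent space $T_Q W_Y$.

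\smallskip

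\noindent\textbf{Identifying the tangent space.} The affine space $V_Y = \bigcap_{A \in Y}\Pi_1(A)$ has tangent space $T_Q W_Y = \{S \in S(n) : A_j S A_j^t = \langle A_j^t A_j, S\rangle = 0 \text{ for all } j\}$, i.e.\ the orthogonal complement of $\operatorname{Span}\{A_j^t A_j\}$ inside $S(n)$. Therefore $Q^{-1} \perp T_Q W_Y$ is equivalent to $Q^{-1} \in \operatorname{Span}_{\mathbb{R}}\{A_1^t A_1, \dots, A_N^t A_N\}$. But the flat condition \eqref{eq:flat} exhibits $\frac{1}{n}Q^{-1}$ as exactly such a linear combination (with coefficients $c_j^2$), so the criticality follows immediately. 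This dispatches the forward direction; the only care needed is that $W_Y$ is a genuine smooth manifold near $Q$, which holds since $Q \in \Sigma_+$ is interior, guaranteeing the variations stay in $W_Y$.

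\smallskip

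\noindent\textbf{The converse.} For the converse I would run the argument backwards, and here the hypotheses split the logic cleanly. Criticality of $\det$ on $W_X$ gives, by the same first-variation computation, that $Q^{-1} \in \operatorname{Span}_{\mathbb{R}}\{A^t A : A \in X\}$, so we may write $\frac{1}{n}Q^{-1} = \sum_{A} \mu_A\, A^t A$ for some \emph{real} coefficients $\mu_A$. The role of the second hypothesis, $\frac{1}{n}Q^{-1} \in C_X$ (the convex hull), is precisely to upgrade these real coefficients to a \emph{nonnegative} combination with total mass governing $|x|^2 = 1$: membership in the convex hull means we can take $\mu_A = c_A^2 \geq 0$ with $\sum_A c_A^2$ the appropriate constant. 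Setting these $c_A^2$ as the squared coefficients in \eqref{eq-x} then reconstructs an immersion $x$ satisfying both $A Q A^t = 1$ for all $A\in X$ (from $Q \in W_X$, giving $|\xi_A|=1$, i.e.\ $x$ maps into the unit sphere and each frequency has the correct length) and the flat/isometry condition \eqref{eq:flat}. By Takahashi's theorem, the resulting map is an isometric minimal immersion of the torus $\mathbb{R}^n/\Lambda_n$ with $Q^{-1}$ as Gram matrix of $\Lambda_n$.

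\smallskip

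\noindent\textbf{Main obstacle.} I expect the delicate point to be the interplay between the two hypotheses in the converse: criticality alone only yields a real (possibly signed) representation of $Q^{-1}$ in the span of the $A^t A$, and one must verify that convex-hull membership forces the coefficients to be simultaneously nonnegative \emph{and} compatible with the normalization $\sum c_A^2 = 1$ coming from $x$ landing on $S^{2N-1}$. Equivalently, one checks that the supporting-hyperplane description of $C_X$ at an interior-facing point matches the linear span condition, so that no sign obstruction arises. A secondary technical point is ensuring that the $A \in X$ actually realize as shortest dual-lattice vectors (so that the constructed $x$ is built from genuine first eigenfunctions when one later specializes to the $\lambda_1$ setting), but for the bare minimality statement of this theorem only the length-$1$ normalization $AQA^t=1$ is required, which is automatic from $Q \in W_X$.
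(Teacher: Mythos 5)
Your proposal is correct and follows essentially the same route as the paper's proof: the first-variation formula $\tfrac{d}{dt}\det\gamma(t)\big|_{t=0}=\det(Q)\,\langle Q^{-1},\gamma'(0)\rangle$, the identification of $\mathrm{Span}\{A^tA \mid A\in Y\}$ as the normal space of $W_Y$ at $Q$, and the observation that minimality of the homogeneous immersion is equivalent to $Q\in W_Y$ together with $\tfrac{1}{n}Q^{-1}\in C_Y$ (with the converse reconstructing $x$ from the convex-hull coefficients $c_j^2=y_j$ via Takahashi). Your worry about a sign obstruction in the converse is moot, exactly as you suspect: convex-hull membership hands you nonnegative coefficients directly, and the normalization $\sum_j c_j^2=1$ is automatic since $\langle Q^{-1}/n, Q\rangle=1$ while $A_jQA_j^t=1$ for $Q\in W_X$.
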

\begin{proof}
Let $\gamma(t)$ be a smooth curve in $W_Y$ passing through $Q$ at $t=0$. Set $f(t)\triangleq\det{\gamma(t)}$, 
it is easy to verify that
$$f^{'}(0)=f(0)\langle Q^{-1}, \gamma'(0)\rangle.$$
As an isometric homogeneous immersion, $x$ is minimal if and only if
$Q\in W_Y$, $Q^{-1}\in C_Y$.
The conclusion follows from that $ \mathrm{Span}\{ A^tA ~|~A\in Y\}$ is the normal space of $W_Y$ at $\gamma(0)=Q$.
\end{proof}
\begin{remark}\label{rk-diag}
We point out that if $Q$ is a critical point of the determinant function restricted on $W_X$, then $\frac{Q^{-1}}{n}$ lies in $\mathrm{Span}\{A^tA\, |\, A\in X\}$ automatically. In fact let $Q+tS$ be an arbitrary segment in $W_X$, 
then it follows from $\langle Q^{-1},S\rangle$=0 that
$$\langle \frac{Q^{-1}}{n},Q+tS\rangle=1,$$
which implies $\frac{Q^{-1}}{n}\in \mathrm{Span}\{A^tA\, |\, A\in X\}$.

As a result, if all the vectors $A_i$ in $X$ can be arranged as row vectors to form a block diagonal matrix, then the critical point $Q$ is also block diagonal.
\end{remark}
Although the following conclusion is well known, we give a proof here for completeness.
\begin{lemma}\label{lem-det}
The function $\ln\circ\det$ restricted on $\Sigma_+$ is strictly concave.
\end{lemma}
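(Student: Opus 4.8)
The plan is to reduce strict concavity to a one-dimensional statement along line segments and then compute a second derivative. Since $\Sigma_+$ is convex (being the interior of the convex cone $\Sigma$ described above), for any two distinct points $M_0, M_1\in\Sigma_+$ the whole segment $M(t)=M_0+t(M_1-M_0)$, $t\in[0,1]$, lies in $\Sigma_+$, so every $M(t)$ is positive definite and in particular invertible. Writing $S=M_1-M_0$, a nonzero symmetric matrix, and $g(t)\triangleq\ln\det M(t)$, it suffices to show $g''(t)<0$ for all $t\in(0,1)$; strict concavity of $\ln\circ\det$ then follows at once.

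First I would compute $g'(t)$. By Jacobi's formula for the derivative of the determinant --- the same computation already used in the proof of Theorem~\ref{thm-variation}, where $f'(0)=f(0)\langle Q^{-1},\gamma'(0)\rangle$ --- one has $\frac{d}{dt}\det M(t)=\det M(t)\,\operatorname{tr}(M(t)^{-1}S)$, and hence $g'(t)=\operatorname{tr}(M(t)^{-1}S)=\langle M(t)^{-1},S\rangle$.

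Next, differentiating once more and using the standard identity $\frac{d}{dt}M(t)^{-1}=-M(t)^{-1}SM(t)^{-1}$, I obtain
\[
g''(t)=-\operatorname{tr}\big(M(t)^{-1}SM(t)^{-1}S\big).
\]
To see that this is strictly negative, let $M(t)^{-1/2}$ denote the symmetric positive-definite square root of $M(t)^{-1}$ and set $B=M(t)^{-1/2}SM(t)^{-1/2}$, which is symmetric. By the cyclicity of the trace, $\operatorname{tr}(M(t)^{-1}SM(t)^{-1}S)=\operatorname{tr}(B^2)=\sum_{i,j}B_{ij}^2=\|B\|_F^2\geq 0$, with equality if and only if $B=0$. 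Since $M(t)^{-1/2}$ is invertible and $S\neq 0$, we have $B\neq 0$, so $g''(t)=-\|B\|_F^2<0$.

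There is no serious obstacle here; the only points requiring a little care are the convexity of the domain (so that the segment remains in $\Sigma_+$ and $M(t)$ stays invertible throughout), and the passage to the symmetric square root, which rewrites the second derivative as a genuine sum of squares. This last step is exactly what upgrades the inequality $g''\leq 0$ (ordinary concavity) to the strict inequality $g''<0$ (strict concavity), and it is the reason the nondegeneracy hypothesis $S\neq 0$ is used.
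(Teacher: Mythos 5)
Your proof is correct and follows essentially the same route as the paper: restrict to a line with direction $S\neq 0$, compute $\frac{d^2}{dt^2}\ln\det = -\operatorname{tr}(M^{-1}SM^{-1}S)$, and use the symmetric square root of $M^{-1}$ together with cyclicity of the trace to rewrite this as minus a Frobenius norm squared, which is strictly negative. The paper's proof uses exactly this square-root trick (with $H^2=P^{-1}$ and $f''(0)=-|HSH|^2<0$); your version merely spells out the reduction to segments more explicitly.
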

\begin{proof}
Given a positive definite matrix $P\in \Sigma_+$, Let $\gamma(t)$ be a smooth curve in $\Sigma_+$ with $\gamma'(0)=S\in S(n)\backslash \{0\}$.
Let $f(t)=\ln\circ\det(\gamma(t))$, then we have $f'(t)=\operatorname{tr}\left(\gamma(t)^{-1}S\right)$ and
\[
f''(0)
=-\operatorname{tr}\left(P^{-1}SP^{-1}S\right)=-\operatorname{tr}\left(HSHHSH\right)=-|HSH|^2< 0,
\]
where $H$ is a square root of the positive definite matrix $P^{-1}$ (i.e., $H^2=P^{-1}$), and we have used the fact that $S\in S(n)$.
\end{proof}
Therefore, for any given subset $X\subset \mathbb{Z}^n$, if $W_X\neq \varnothing$, the determinant function $\det$ has only one critical point in $W_X$, which is the maximal point. This implies the following result about uniqueness.

\begin{corollary}\label{cor-unique}
Let $x: T^n=\mathbb{R}^n/\Lambda_n\rightarrow \mathbb{S}^{2N-1}$ and $\tilde x: \widetilde{T}^n=\mathbb{R}^n/\widetilde{\Lambda}_n\rightarrow \mathbb{S}^{2N-1}$ be two isometric homogeneous minimal immersions, if after choosing suitable generator respectively, $x$ and $\tilde x$ 
share the same subset $Y\subset \mathbb{Z}^n$, 
then $T^n$ and $\widetilde{T}^n$ are isometric. Moreover, $x$ is congruent to $\tilde x$ if $\{A^tA\,|\, A\in Y\}$ are linearly independent.
\end{corollary}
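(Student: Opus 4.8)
The plan is to prove Corollary~\ref{cor-unique} as a direct consequence of Theorem~\ref{thm-variation} together with the strict concavity of $\ln\circ\det$ established in Lemma~\ref{lem-det}. The key observation is that the subset $Y\subset\mathbb{Z}^n$ shared by $x$ and $\tilde x$ determines both the affine submanifold $W_Y=V_Y\cap\Sigma_+$ and the convex hull $C_Y$ purely combinatorially, without reference to the particular torus. Thus both $Q$ (the Gram matrix of $\Lambda_n^*$) and $\widetilde Q$ (that of $\widetilde\Lambda_n^*$) are critical points of $\det$ restricted to the \emph{same} manifold $W_Y$.

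First I would invoke the minimality half of Theorem~\ref{thm-variation}: since $x$ is an isometric homogeneous minimal immersion with integer-vector set $Y$, its Gram matrix $Q$ is a critical point of $\det$ on $W_Y$; likewise $\widetilde Q$ is a critical point of $\det$ on the same $W_Y$. Next I would apply Lemma~\ref{lem-det}. Because $\ln$ is strictly increasing, a point maximizes $\det$ on $W_Y$ if and only if it maximizes $\ln\circ\det$ there, and since $W_Y\subset\Sigma_+$ is the intersection of a convex (affine) set with the convex cone $\Sigma_+$, it is convex; the restriction of the strictly concave function $\ln\circ\det$ to a convex set has at most one critical point, which is its unique maximizer. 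Hence $Q=\widetilde Q$. Translating back, the lattices $\Lambda_n^*$ and $\widetilde\Lambda_n^*$ have identical Gram matrices under the chosen generators, so $\Lambda_n$ and $\widetilde\Lambda_n$ are isometric, and therefore $T^n$ and $\widetilde T^n$ are isometric.

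For the second assertion, once $Q=\widetilde Q$ the two immersions are
$$x=(c_1e^{i\q_1},\ldots,c_Ne^{i\q_N}),\qquad \tilde x=(\tilde c_1e^{i\q_1},\ldots,\tilde c_Ne^{i\q_N}),$$
built from the \emph{same} angles $\q_j=2\pi\langle\xi_j,u\rangle$ (the $\xi_j$ being recovered identically from $Y$ and the common $Q$). The flat condition \eqref{eq:flat} forces both coefficient vectors to satisfy
$$\textstyle\sum_j c_j^2\,A_j^tA_j=\tfrac{1}{n}Q^{-1}=\sum_j \tilde c_j^2\,A_j^tA_j,$$
so $\sum_j(c_j^2-\tilde c_j^2)A_j^tA_j=0$. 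Under the hypothesis that $\{A^tA\mid A\in Y\}$ is linearly independent, this yields $c_j^2=\tilde c_j^2$ for every $j$, whence $c_j=\tilde c_j$ (both nonnegative). Thus $x$ and $\tilde x$ coincide up to the obvious rotations in each $e^{i\q_j}$-factor, which are orthogonal transformations of the ambient $\mathbb{S}^{2N-1}$, giving congruence.

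The main obstacle, and the step requiring the most care, is the first assertion's passage from ``same $Y$'' to ``same critical manifold and hence same $Q$.'' One must be careful that $W_Y$ depends only on $Y$ and not on which torus produced it, and that $\Sigma_+$ is open so that the critical-point condition is the genuine interior maximization to which strict concavity applies; the possibility $W_Y=\varnothing$ is excluded here since both $Q,\widetilde Q\in W_Y$. Everything else is a routine application of the uniqueness of the maximizer of a strictly concave function on a convex set, already packaged in the remark following Lemma~\ref{lem-det}.
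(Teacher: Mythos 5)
Your proposal is correct and follows essentially the same route as the paper: the paper derives this corollary precisely from Theorem~\ref{thm-variation} together with Lemma~\ref{lem-det}, noting that since $\ln\circ\det$ is strictly concave on the convex set $W_Y$ (which is determined by $Y$ alone), the critical point of $\det$ on $W_Y$ is unique, forcing $Q=\widetilde Q$; the congruence statement then follows from the flat condition \eqref{eq:flat} and the linear independence of $\{A^tA\,|\,A\in Y\}$, exactly as you argue. Your explicit attention to the relative openness of $W_Y$ in $V_Y$ (so that a critical point is the genuine maximizer) and to the nonemptiness of $W_Y$ makes precise what the paper leaves implicit.
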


It seems that we can use the following approach to construct a homogeneous minimal immersion of flat torus. Choose a subset $X\subset\mathbb{Z}^n$, determine whether $W_X$ is not empty, and then discriminate whether the maximal point $Q$ of $\det$ restricted on $W_X$ lies in the 
the convex hull $C_X$.

Note that in general the dimension of $V_X$ is $n(n+1)/2-\sharp(X)$, which implies $W_X$ could be empty if $X$ involves too many vectors. Even if $V_X$ exists, it also could have no intersection with $\Sigma_+$. For example, we will get a degenerated matrix when
\[
Q=\begin{pmatrix}
	1&-1/2&-1/2\\-1/2&1&-1/2\\-1/2&-1/2&1
\end{pmatrix},\quad X=\begin{pmatrix}
1&&&1&&1\\&1&&1&1&\\&&1&&1&1
\end{pmatrix}.
\]
It seems to be a challenge to obtain a general method by which one can construct the desirable integer set $X$ efficiently. 

Next we give an alternative method to determine homogeneous minimal immersions of flat tori, by use of a variational characterization of $Q^{-1}$ (Compare Theorem \ref{thm-variation}). 
\begin{theorem}\label{thm-vari}
Let $x: T^n=\mathbb{R}^n/\Lambda_n\rightarrow \mathbb{S}^{2N-1}$ be an isometric homogeneous immersion, where $N$ is the half dimension of the eigenspace of $T^n$ corresponding to $n$. If $x$ is minimal and linearly full, then $Q^{-1}$ lies in the interior of $C_Y$, and is a critical point of the  determinant function restricted on $C_Y$.

Conversely, given a finite set $X$ of integer vectors such that $C_X\cap \Sigma_+\neq \varnothing$, if $P\in \overset{\circ}{C_X}$ is a critical point of the determinant function restricted on $C_X$, then the torus $\mathbb{R}^n/\Lambda_n$ determined by $nP$ (as the Gram matrix of $\Lambda_n$) admits an isometric minimal immersion in some $\mathbb{S}^{2N-1}$.
\end{theorem}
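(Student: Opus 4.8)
The plan is to deduce both directions from the two algebraic conditions already isolated in this subsection: the incidence condition \eqref{eq-AQA}, $A_jQA_j^t=1$, and the flat condition \eqref{eq:flat}, $\sum_{j}c_j^2A_j^tA_j=\frac1nQ^{-1}$ (together with $\sum_j c_j^2=1$, and $c_j^2>0$ precisely when $x$ is linearly full). The single computational fact underlying everything is that, for the trace inner product on $S(n)$, the gradient of $\det$ at an invertible $M$ is $\det(M)\,M^{-1}$; hence a point $M$ in the relative interior of a convex set $C\subset S(n)$ is a critical point of $\det|_{C}$ if and only if $M^{-1}$ is orthogonal to the tangent space of the affine hull of $C$, the latter being $\mathrm{Span}\{A_i^tA_i-A_j^tA_j\}$ for $C=C_Y$.

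For the forward direction I would first read off from \eqref{eq:flat} and $\sum_j c_j^2=1$ that $\frac1nQ^{-1}=\sum_j c_j^2A_j^tA_j$ is a convex combination of the generators of $C_Y$, so it lies in $C_Y$; linear fullness forces every $c_j^2>0$, making this a strictly positive combination and hence placing $\frac1nQ^{-1}$ (the point the statement records as $Q^{-1}$, in the convention where $\Lambda_n$ corresponds to $nP$) in the relative interior $\overset{\circ}{C_Y}$. For criticality, the gradient of $\det$ at $M=\frac1nQ^{-1}$ is proportional to $M^{-1}=nQ$, and \eqref{eq-AQA} gives $\langle Q,A_i^tA_i\rangle=A_iQA_i^t=1$ for every $i$; therefore $\langle Q,\,A_i^tA_i-A_j^tA_j\rangle=0$, so $Q$ (hence the gradient) is orthogonal to the whole tangent space and $\frac1nQ^{-1}$ is critical. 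This is the precise dual of Theorem~\ref{thm-variation}, where $Q$ itself was critical for $\det$ on $W_Y$.

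For the converse I would reverse these implications starting from the given critical point $P\in\overset{\circ}{C_X}$. First I must check $P$ is positive definite: since $C_X\cap\Sigma_+\neq\varnothing$, $\det$ attains a strictly positive maximum on the compact set $C_X$, and by the strict concavity of $\ln\circ\det$ on $\Sigma_+$ (Lemma~\ref{lem-det}) its unique interior critical point there is that maximizer, so the given $P$ must lie in $\Sigma_+$. Membership in $\overset{\circ}{C_X}$ then yields a strictly positive representation $P=\sum_j c_j^2A_j^tA_j$ with $\sum_j c_j^2=1$, and criticality gives $\langle P^{-1},A_i^tA_i\rangle=A_iP^{-1}A_i^t$ constant, say $\kappa$; pairing $P^{-1}$ with $P=\sum_j c_j^2A_j^tA_j$ and using $\mathrm{tr}(P^{-1}P)=n$ with $\sum_j c_j^2=1$ forces $\kappa=n$. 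Setting $Q=\frac1nP^{-1}$ reconstructs both \eqref{eq-AQA} ($A_iQA_i^t=1$) and \eqref{eq:flat} ($\sum_j c_j^2A_j^tA_j=\frac1nQ^{-1}=P$). Taking $\Lambda_n$ whose dual $\Lambda_n^*$ has Gram matrix $Q$ (so $\Lambda_n$ has Gram matrix $Q^{-1}=nP$) and letting $\xi_j=A_j$, the condition $A_jQA_j^t=1$ says $|\xi_j|=1$; the homogeneous map $x=(c_1e^{i\theta_1},\dots,c_Ne^{i\theta_N})$ of \eqref{eq-x}, read with the metric $\frac{4\pi^2}{n}\sum_i du_i^2$ so that each coordinate function has Laplace eigenvalue $n$, is minimal by Takahashi's theorem \cite{Taka} and isometric by \eqref{eq:flat}, giving the desired immersion in $\mathbb{S}^{2N-1}$ with $N=\sharp(X)$.

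The routine gradient algebra aside, the two genuinely delicate points are, first, guaranteeing that the interior critical point $P$ is actually positive definite — so that $P^{-1}$, and hence $Q$, exist — for which the strict concavity of $\ln\circ\det$ in Lemma~\ref{lem-det} is indispensable in ruling out a critical point on the positive-semidefinite boundary where $\det$ degenerates; and second, promoting relative-interior membership to a convex representation with all coefficients $c_j^2>0$, which is exactly what makes the reconstructed immersion use all $N$ frequencies and be linearly full. I expect the positive-definiteness step to demand the most care.
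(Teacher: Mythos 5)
Your proposal is correct and takes essentially the same route as the paper: the paper's proof handles only the converse, using exactly your line segments $\gamma_j(t)=P+t(A_j^tA_j-A_1^tA_1)$ to deduce that $\langle P^{-1},A_j^tA_j\rangle$ is independent of $j$, then pairing $P^{-1}$ with $P=\sum_j y_jA_j^tA_j$ and $\langle P^{-1},P\rangle=n$, $\sum_j y_j=1$ to pin the constant to $n$ (so $\tfrac{P^{-1}}{n}\in W_X$), while the forward direction is left implicit precisely as the consequence of \eqref{eq-AQA}, \eqref{eq:flat} and $c_j^2>0$ that you spell out, including the $Q^{-1}$ versus $\tfrac{Q^{-1}}{n}$ normalization. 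One small repair: your concavity appeal does not by itself exclude a singular critical point in $\overset{\circ}{C_X}$ (when $\mathrm{rank}\,P\le n-2$ the gradient of $\det$, the adjugate, vanishes, so such a point is automatically critical); the clean argument — which the paper silently omits by writing $P^{-1}$ without comment — is that $P\in \overset{\circ}{C_X}$ gives all $y_j>0$, so $Pv=0$ would force $vA_j^tA_jv^t=0$, i.e.\ $A_jv^t=0$ for every $j$, making every matrix of $C_X$ annihilate $v$ and contradicting $C_X\cap\Sigma_+\neq\varnothing$.
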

\begin{proof}

Suppose $X=\{A_1, A_2, \cdots, A_k\}$, and $P=\sum_{j=1}^k y_j A_j^tA_j$. It follows from  $P\in \overset{\circ}{C_X}$ that $y_j> 0$ for all $1\leq j\leq k$.
We only need to prove that $\frac{P^{-1}}{n}\in W_X$, i.e.,
\beq
\langle \frac{P^{-1}}{n}, A_j^tA_j\rangle=1,~~~1\leq j\leq k.
\eeq
For any given $2\leq j\leq k$, consider the line segment
$$\gamma_j(t)=P+t(A_j^tA_j-A_1^tA_1).$$
It is obvious that for sufficient small $t$, $\gamma_j(t)$ lies in $C_X$. So $P$ is a critical point of $\det(\gamma_j(t))$, which implies that $\langle P^{-1}, \gamma_j'(0)\rangle=0$. Therefore we have
$$\langle P^{-1}, A_j^tA_j\rangle=\langle P^{-1}, A_1^tA_1\rangle,~~~2\leq j\leq k.$$
Note that $\sum_{j=1}^k y_j=1$, it follows that
$$n=\langle P^{-1},P \rangle=\sum_{j=1}^k y_j\langle P^{-1}, A_j^tA_j\rangle=\big(\sum_{j=1}^k y_j\big)\langle P^{-1}, A_1^tA_1\rangle=\langle P^{-1}, A_1^tA_1\rangle,$$
which completes the proof.
\end{proof}
\begin{remark} The above theorem provides another approach to construct minimal flat tori. Choose a finite set $X$ of integer vectors such that $C_X\cap \Sigma_+\neq \varnothing$ (see Figure~\ref{fig:CX}). Calculate the the maximal point $P$ of $\det$ on $C_X$. If $P\in \overset{\circ}{C_X}$, then the matrix data $\{\frac{P^{-1}}{n}, X\}$ provides a minimal flat $n$-torus. Otherwise, we choose $C_{X'}$ to be the face of $C_X$ so that $P\in \overset{\circ}C_{X'}$ (such face could have high codimension and the existance is due to the compactness of $C_X$), then $\{\frac{P^{-1}}{n}, X'\}$ provides a minimal flat $n$-torus.

This remark can be seen as a generalization of Bryant's characterization to minimal flat $2$-tori in spheres, see Proposition 3.3 in \cite{Bryant}.

\end{remark}
\begin{figure}[htb]
	\centering
	\includegraphics[width=0.33\textwidth]{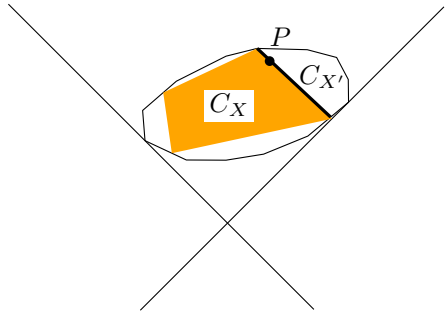}
	\caption{$C_X$ and its faces}\label{fig:CX}
\end{figure}

\subsection{Homogeneous $\lambda_1$-minimal flat tori}
To determine homogeneous $\lambda_1$-minimal immersions of flat tori, one has to solve the following problem. 
\vskip 0.2cm
\noindent
{\bf Problem.}
{\em Firstly choose a subset $X\subset\mathbb{Z}^n$, such that in $W_X$ there is  a maximal point $Q$ of $\det$, and $
Q^{-1}/n\in \overset{\circ}{C_X}$. Then  discriminate in the interior of the hyper-ellipsoid 
determined by $Q$, whether there exist other nonzero integer vectors, i.e., discriminate whether $vQv^t\geq 1$ holds for all $v\in \mathbb{Z}^n\backslash \{0\}$.}
\vskip 0.2cm

In $\Sigma_{+}$, the infinite constraints
\beq\label{eq-constraints}
vMv^t\geq 1~ \text{for~ all~} v\in \mathbb{Z}^n\backslash \{0\}
\eeq
define a non-compact convex domain $\Omega$. 
So $X$ can determine a homogeneous $\lambda_1$-minimal immersion of some flat torus if and only if $Q\in \partial \Omega$. 
Given a positive definite matrix $Q$, 
it seems that to verify whether $Q\in \partial \Omega$, i.e., satisfies \eqref{eq-constraints}, 
infers an infinite process. However, due to Minkowski's reduction theory of lattice (see \cite{Siegel} for reference), only finite inequalities need to be checked. This can also be seen from the following simple theorem.
\begin{theorem}
\label{thm:general}
For a given $n\times n$ positive definite matrix $Q$ with diagonal entries $1$, there exist $n$ integers $a_k(Q)>0$ $(1\leq k\leq n)$ such that $vQv^t\geq 1$ for all $v\in\mathbb{Z}^n\backslash \{0\}$ if and only if it holds for all integer vectors in
$$S\triangleq\big{\{}v=(v_1,\cdots,v_n)\in \mathbb{Z}^n \, \big{|}\, |v_k|< a_k(Q), 1\leq k\leq n\big{\}}.$$
\end{theorem}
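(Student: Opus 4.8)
The forward implication is trivial: if $vQv^t\ge 1$ holds for every nonzero $v\in\mathbb{Z}^n$, it holds in particular for the nonzero vectors of $S$. So the whole content lies in the converse, and the plan is to produce explicit integers $a_k(Q)$ with the property that any integer vector $v$ lying \emph{outside} the box $S$ automatically satisfies $vQv^t>1$. Once such bounds are in hand the converse is immediate: assuming $vQv^t\ge 1$ throughout $S$, an arbitrary nonzero $v\in\mathbb{Z}^n$ either lies in $S$ (handled by hypothesis) or has $|v_k|\ge a_k(Q)$ for some $k$, in which case the bound forces $vQv^t>1\ge 1$.

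To find the $a_k(Q)$ I would exploit the fact that positive definiteness of $Q$ makes the sublevel set $\mathcal{D}=\{x\in\mathbb{R}^n: xQx^t\le 1\}$ a compact ellipsoid, hence bounded in each coordinate direction; the relevant quantity is its width $w_k:=\max\{|x_k|: xQx^t\le 1\}$ along the $k$-th axis. Writing $y=Q^{1/2}x^t$, so that $xQx^t=|y|^2$ and $x_k=e_kQ^{-1/2}y=(Q^{-1/2}e_k^t)^t y$, the Cauchy--Schwarz inequality gives $|x_k|\le |Q^{-1/2}e_k^t|=\sqrt{e_kQ^{-1}e_k^t}=\sqrt{(Q^{-1})_{kk}}$, with equality attained for a suitable $x$; thus $w_k=\sqrt{(Q^{-1})_{kk}}$. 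I would then set $a_k(Q)=\lfloor\sqrt{(Q^{-1})_{kk}}\rfloor+1$, a positive integer strictly larger than $w_k$.

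With this choice the converse follows at once: if $|v_k|\ge a_k(Q)>w_k$ for some $k$, then $v\notin\mathcal{D}$, i.e. $vQv^t>1$, which is exactly the bound needed for vectors outside $S$. The normalization $q_{kk}=1$ plays a confirming role through the standard inequality $(Q^{-1})_{kk}\,q_{kk}\ge 1$ for positive definite matrices, which yields $(Q^{-1})_{kk}\ge 1$, so each $a_k(Q)\ge 2$ and in particular $e_k\in S$, consistent with the basis vectors lying on the boundary ellipsoid $e_kQe_k^t=1$. I do not expect a genuine obstacle: the statement is a finiteness/compactness assertion, and the only point requiring care is producing sharp \emph{coordinate-wise} bounds rather than a single crude radius coming from the smallest eigenvalue of $Q$ --- and this is precisely what the computation $w_k=\sqrt{(Q^{-1})_{kk}}$ supplies.
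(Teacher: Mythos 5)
Your proof is correct and takes essentially the same route as the paper's: the paper bounds $|\alpha|\geq |c_k|\,d_k$, where $d_k$ is the distance from $\xi_k$ to the hyperplane spanned by the remaining basis vectors, and your ellipsoid half-width $w_k=\sqrt{(Q^{-1})_{kk}}$ is exactly $d_k^{-1}$ (the paper's least-squares expression for $d_k^2$ is the Schur-complement identity $d_k^2=1/(Q^{-1})_{kk}$). Accordingly your choice $a_k(Q)=\lfloor\sqrt{(Q^{-1})_{kk}}\rfloor+1$ coincides with the paper's integer determined by $a_k(Q)-1\leq d_k^{-1}<a_k(Q)$, so the two arguments are the same coordinate-wise estimate phrased in dual (ellipsoid-width versus lattice-distance) language.
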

\begin{proof}
    Let $\{\xi_i\}$ be a basis of $\mathbb{R}^n$ with Gram matrix $Q$, $\Pi_k$ the $(n-1)$-dimensional subspace spanned by $\xi_1,\cdots,\xi_{k-1},\xi_{k+1},\cdots,\xi_n$. So the distance of $\xi_k$ to $\Pi_k$ is a fixed value $d_k>0$, which can be determined from the entries $a_{ij}$ of $Q$ by using the least square method, such as
    \begin{equation}\label{eq:dn}
    d_n^2=|\xi_n|^2-|\xi_n^\top|^2=1-|\xi_n^\top|^2=1-(a_{1n},\cdots,a_{(n-1)n})(a_{ij})^{-1}_{1\leq i,j\leq n-1}(a_{1n},\cdots,a_{(n-1)n})^t,
\end{equation}
where $\xi_n^\top$ is the orthogonal projection of $\xi_n$ into $\Pi_n$.

    Suppose $a_k(Q)$ is the integer such that $a_k(Q)-1 \leq d_k^{-1} < a_k(Q)$, then we will get $|\alpha|\geq |c_k|d_k>1$ for any $\alpha=c_1\xi_1+\cdots+c_n\xi_n$
    when some $|c_k|\geq a_k(Q)$.
\end{proof}
Although the criterion given in the above theorem only infers finite steps, it still requests a lot of computations. For $n\leq 6$, we give an alternative criterion 
which comes from Minkowski's reduction theory (see \cite{Ryshkov73}). For the case $n>6$, we do not find such an explicit description 
in the literature so far.  Applied to our case, the criterion is stated as below.

\begin{theorem}
\label{thm:special}	
Suppose $Q$ is a $n\times n$ positive definite matrix with diagonal entries $1$, and $n\leq 6$. Then 
$vQv^t\geq 1$ holds for all $v\in\mathbb{Z}^n\backslash \{0\}$, if and only if, it holds for those integer vectors $v$,
whose entries take values from the first $n$ integers in each rows below  with arbitrary sign (``$+$" or ``$-$"):
	\[
	\begin{array}{llllll}
		1&1&0&0&0&0\\
		1&1&1&0&0&0\\
		1&1&1&1&0&0\\
		1&1&1&1&1&0\\
		1&1&1&1&2&0\\
		1&1&1&1&1&1\\
		1&1&1&1&1&2\\
		1&1&1&1&2&2\\
		1&1&1&1&2&3\, .\\
	\end{array}
	\]
\end{theorem}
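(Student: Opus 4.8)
\textbf{Proof strategy for Theorem \ref{thm:special}.}

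The plan is to reduce the infinite family of inequalities $vQv^t \geq 1$ ($v \in \mathbb{Z}^n \setminus \{0\}$) to the finite list displayed in the statement by invoking Minkowski's reduction theory of positive definite quadratic forms, following Ryshkov \cite{Ryshkov73}. The key observation is that the condition ``$vQv^t \geq 1$ for all nonzero integer $v$'' asserts precisely that $1$ is a (lower bound for the) \emph{arithmetic minimum} of the quadratic form associated to $Q$; since the diagonal entries of $Q$ all equal $1$, the form already attains the value $1$ on each standard basis vector $e_k$, so the question is whether any nonzero integer vector can make the form \emph{strictly smaller} than $1$. Thus I must identify which integer vectors can possibly be the minimal (shortest) vectors of a lattice whose Gram matrix is $Q$.

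First I would recall the setup of Minkowski reduction: a positive definite form is Minkowski-reduced if, roughly, each successive basis vector $\xi_k$ is as short as possible among lattice vectors completing $\xi_1,\dots,\xi_{k-1}$ to a basis. Ryshkov's theorem characterizes, for each dimension $n \leq 6$, the finite set of integer vectors that can occur as a shortest vector relative to a Minkowski-reduced form — these are exactly the ``relevant'' or ``extreme'' directions, and they are enumerated by the coefficient patterns in the displayed table (with all sign choices). Because the property of being a shortest vector is invariant under the action of $GL(n,\mathbb{Z})$, and because every positive definite $Q$ with unit diagonal can be brought by a unimodular change of basis to a Minkowski-reduced form preserving the normalization, it suffices to verify the inequality on this finite catalogue. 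The core of the argument is therefore: any potential shortest vector $v$ (one for which $vQv^t < |e_k|^2 = 1$ could conceivably hold) must be $GL(n,\mathbb{Z})$-equivalent to one of the listed vectors, so checking the list checks everything.

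More concretely, I would argue the two directions separately. The ``only if'' direction is trivial, since the listed vectors are particular elements of $\mathbb{Z}^n \setminus \{0\}$. For the ``if'' direction, suppose $vQv^t \geq 1$ holds for every $v$ in the table but fails for some $w \in \mathbb{Z}^n \setminus \{0\}$, so $wQw^t < 1$. Such $w$ would be strictly shorter than the basis vectors, hence (after reduction) must appear among the candidate shortest vectors classified by Ryshkov; that classification is exactly what the table encodes. This yields a contradiction, completing the proof. The delicate point is matching Ryshkov's abstract list of shortest-vector coefficient patterns to the explicit rows in the table and confirming that the normalization ``diagonal entries $= 1$'' is compatible with Minkowski reduction — i.e. that one may always assume the reduced form still has the relevant coordinate vectors among the shortest.

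The main obstacle I expect is precisely this bookkeeping: Ryshkov's theory is stated for reduced forms, whereas the hypothesis only normalizes the diagonal. One must verify that restricting attention to forms with unit diagonal, together with the freedom to relabel and sign-flip basis vectors, exactly reproduces the enumerated patterns, and that no reduction step is needed beyond what the table already accounts for. In practice this is where the cited reduction theory does the heavy lifting, so the ``proof'' is more an invocation and translation of \cite{Ryshkov73} into the present normalization than an independent derivation; the honest statement is that the finiteness and the explicit list are \emph{imported} from Minkowski--Ryshkov reduction theory rather than reproved here.
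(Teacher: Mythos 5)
Your overall strategy --- importing the finite list from Minkowski--Ryshkov reduction theory --- is exactly what the paper does: the paper offers no independent derivation of this theorem, presenting it as a criterion ``which comes from Minkowski's reduction theory'' with a citation to \cite{Ryshkov73}. So at the level of where the content comes from, you and the paper coincide. However, the mechanism you sketch for the ``if'' direction contains a genuine gap, and it is precisely the point you flag but leave open. You propose to bring $Q$ to Minkowski-reduced form by a unimodular change of basis ``preserving the normalization,'' and then to locate a hypothetical vector $w$ with $wQw^t<1$ among Ryshkov's candidate minimal vectors. Two things go wrong. First, a unimodular reduction of a unit-diagonal form need not have unit diagonal: if the arithmetic minimum of the lattice were $<1$, the reduced form would have first diagonal entry $<1$, so ``preserving the normalization'' presupposes the conclusion and the argument is circular. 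Second, the hypothesis of the theorem verifies the tabulated vectors in the \emph{original} coordinates; after a unimodular change of basis the tabulated coordinate patterns name different lattice vectors, so the inequalities you have checked no longer line up with Ryshkov's list in the reduced coordinates, and the contradiction you want does not materialize. (Also, your phrase that a short $w$ ``must be $GL(n,\mathbb{Z})$-equivalent to one of the listed vectors'' is vacuous as stated, since every primitive integer vector is $GL(n,\mathbb{Z})$-equivalent to $e_1$; the classification only constrains coordinates \emph{relative to a reduced basis}.)

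The missing observation that makes the citation work, with no reduction step at all, is this: Ryshkov's theorem asserts that for $n\le 6$ the Minkowski reduction domain is cut out by the finitely many inequalities $vQv^t\ge Q_{kk}$ with $v$ ranging over the tabulated patterns (plus sign and ordering normalizations). Since all diagonal entries of $Q$ equal $1$, the ordering conditions $Q_{11}\le\cdots\le Q_{nn}$ hold trivially, the sign conditions can be arranged by flips $e_i\mapsto -e_i$, which preserve both the unit diagonal and the tabulated list (all signs are allowed in it), and the hypothesis ``$vQv^t\ge 1$ for all tabulated $v$'' then says precisely that $Q$ already lies in the reduction domain in the given coordinates. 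The first defining property of a Minkowski-reduced form --- that $Q_{11}$ equals the arithmetic minimum $\min_{v\in\mathbb{Z}^n\setminus\{0\}}vQv^t$ --- immediately yields $vQv^t\ge Q_{11}=1$ for every nonzero integer $v$. In short, the hypothesis certifies that $Q$ is \emph{already} reduced rather than requiring you to reduce it; with that one-line bridge your appeal to \cite{Ryshkov73} closes, and without it the sketch as written fails.
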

\section{Examples of $\lambda_1$-minimal flat tori of dimension $3$ and $4$}\label{sec-examples}
In this section, we present a series of $\lambda_1$-minimal immersions for flat tori $T^3=\mathbb{R}^3/\Lambda_3$ and $T^4=\mathbb{R}^4/\Lambda_4$, which are all homogeneous. Instead of giving the explicit petty immersions
, we use the matrix data 
$$\{Y ,~ Q,~ (c_1^2, c_2^2, \cdots, c_N^2)\}$$
introduced in the last section (see Remark~\ref{rk-notation}). Recall that $Q$ is a Gram matrix of the lattice $\Lambda_n^*$ under some chosen generator, which can determine a flat torus $T^n=\mathbb{R}^n/\Lambda_n$; $N$ denotes the number of distinct shortest lattice vectors of $\Lambda^*_n$ up to $\pm1$, 
i.e., the half dimension of the first eigenspace of flat torus $T^n=\mathbb{R}^n/\Lambda_n$; $Y$ is a $n\times N$ matrix taking these vectors as row vectors; 
$c_i^2$ involves the information of immersion. 

First of all, given two $\lambda_1$-minimal tori $f_i:T^{n_i}\to \mathbb{S}^{m_i} (i=1,2)$, one can construct a new $\lambda_1$-minimal $(n_1+n_2)$-torus by the following direct product (see  \cite{CH}, \cite{TZ} and \cite{2Wang})
\beq\label{eq-prod}
f=\left(\sqrt{\frac{n_1}{n_1+n_2}}f_1,\sqrt{\frac{n_2}{n_1+n_2}}f_2\right):T^{n_1}\times T^{n_2}\to \mathbb{S}^{m_1+m_2+1}.
\eeq
In the sequel, $\lambda_1$-minimal torus constructed in such way is called {\em reducible}, and those non-product ones are called {\em irreducible}.
\begin{remark}\label{rem:reducible}
A reducible $\lambda_1$-minimal flat torus constructed from two homogeneous and $\lambda_1$-minimal tori is homogeneous too. For such torus, by definition, suitable generator of the corresponding dual lattice can be chosen such that 
the matrix $Y$ is block diagonal. Then it follows from Remark~\ref{rk-diag} that 
the Gram matrix $Q$ is also block diagonal. 
Conversely, a $\lambda_1$-minimal flat torus given by diagonal data set is a reducible $\lambda_1$-minimal flat torus.
\end{remark}
\begin{example}\label{ex:3-prod}
As stated in the introduction, the Clifford torus and equilateral torus are two (only two) 
$\lambda_1$-minimal flat $2$-torus, 
whose matrix data are 
$$Q=\left(
\begin{array}{cc}
 1 & 0 \\
0 & 1  
\end{array}
\right),~~~
Y=\left(
\begin{array}{ccc}
 1 & 0  \\
 0 & 1 
\end{array}
\right),~~~ (c_1^2,\, c_2^2)=\left(\frac{1}{2},\, \frac{1}{2}\right),
$$
$$Q=\left(
\begin{array}{ccc}
 1 & -\frac{1}{2} \\
 -\frac{1}{2} & 1 
\end{array}
\right),~~~
Y^t=\left(
\begin{array}{cccc}
 1 & 0 & 1 \\
 0 & 1 & 1
\end{array}
\right),~~~ (c_1^2,\, c_2^2,\, c_3^2)=\left(\frac{1}{3},\, \frac{1}{3},\,\frac{1}{3}\right).
$$

Taking one from these two torus, considering the direct product (as in \eqref{eq-prod}) of it with the unite circle in $\mathbb{R}^2$, we can obtain two examples of $\lambda_1$-minimal flat $3$-torus in $\mathbb{S}^5$ and $\mathbb{S}^7$, whose volume is respectively $\frac{8\sqrt 3}{9}\pi^3$ and $\frac{16}{9}\pi^3$. Their matrix data can be stated as follows:
$$Q=\left(
\begin{array}{ccc}
 1 & 0 &0 \\
0 & 1 &0 \\
0 &0 & 1 \\
\end{array}
\right),~~~
Y=\left(
\begin{array}{ccc}
 1 & 0 & 0 \\
 0 & 1 & 0 \\
 0 & 0 & 1 \\
\end{array}
\right),~~~ (c_1^2,\, c_2^2,\, c_3^2)=\left(\frac{1}{3},\, \frac{1}{3},\, \frac{1}{3}\right),
$$
$$Q=\left(
\begin{array}{ccc}
 1 & -\frac{1}{2} & 0 \\
 -\frac{1}{2} & 1 & 0 \\
 0 & 0 & 1 \\
\end{array}
\right),~~~
Y^t=\left(
\begin{array}{cccc}
 1 & 0 & 1&0 \\
 0 & 1 & 1&0 \\
  0 & 0 & 0&1 \\
\end{array}
\right),~~~ (c_1^2,\, c_2^2,\, c_3^2,\, c_4^2)=\left(\frac{2}{9},\, \frac{2}{9},\,\frac{2}{9},\, \frac{1}{3}\right).
$$
\end{example}

Next we give some data, 
in which 
neither $Q$ nor $Y$ is block diagonal.  These data were obtained originally by applying our variational characterization, 
where some tedious but 
routine computation is involved, and we omit it here. Alternatively,  
one can verify directly that all these data fit \eqref{eq-AQA} and \eqref{eq:flat}, and satisfy the criterion given in 
Theorem \ref{thm:special}, 
hence provide irreducible examples of $\lambda_1$-minimal flat tori.
\begin{example}\label{ex:3-1}
	N=$4$. The following matrix data
	$$Q=\left(
	\begin{array}{ccc}
		1 & -\frac{1}{3} & -\frac{1}{3} \\
		-\frac{1}{3} & 1 & -\frac{1}{3} \\
		-\frac{1}{3} & -\frac{1}{3} & 1 \\
	\end{array}
	\right),~~~
	Y^t=\left(
	\begin{array}{cccc}
		1 & 0 & 0&1 \\
		0 & 1 & 0 &1\\
		0 & 0 & 1 &1\\
	\end{array}
	\right)
	,~~~(c_1^2,\, c_2^2,\, c_3^2,\, c_4^2)=\left(\frac{1}{4},\, \frac{1}{4},\,\frac{1}{4},\, \frac{1}{4}\right)
	$$
	gives a $\lambda_1$-minimal flat $3$-torus in $\mathbb{S}^7$, which  is irreducible, linearly full and has volume $2\pi^3$.   
\end{example}
\begin{example}\label{ex:3-2}
	N=$5$. The following matrix data  
	$$Q=\left(
	\begin{array}{ccc}
		1 & -\frac{1}{2} & -\frac{1}{4} \\
		-\frac{1}{2} & 1 & -\frac{1}{4} \\
		-\frac{1}{4} & -\frac{1}{4} & 1 \\
	\end{array}
	\right),~~~
	Y^t=\left(
	\begin{array}{ccccc}
		1 & 0 & 1&0&1 \\
		0 & 1 & 1&0&1\\
		0 & 0 & 0 &1&1\\
	\end{array}
	\right)
	,~~~(c_1^2,\, c_2^2,\, c_3^2,\, c_4^2,\, c_5^2)=\left(\frac{2}{9},\, \frac{2}{9},\,\frac{2}{9},\, \frac{1}{9},\,\frac{2}{9}\right)
	$$
	gives a $\lambda_1$-minimal flat $3$-torus in $\mathbb{S}^9$, which  is irreducible, linearly full and has volume $\frac{32\sqrt 3}{27}\pi^3$.
\end{example}
\begin{example}\label{ex:3-3}
	N=$6$. The following matrix data  
	$$Q=\left(
	\begin{array}{ccc}
		1 & -\frac{1}{2} & 0 \\
		-\frac{1}{2} & 1 & -\frac{1}{2} \\
		0 & -\frac{1}{2} & 1 \\
	\end{array}
	\right),~
	Y^t=\left(
	\begin{array}{cccccc}
		1 & 0 & 1&0&0&1 \\
		0 & 1 & 1&0&1&1\\
		0 & 0 & 0 &1&1&1\\
	\end{array}
	\right)
	,~(c_1^2,\, c_2^2,\, c_3^2,\, c_4^2,\, c_5^2,\, c_6^2)=\left(\frac{1}{6},\, \frac{1}{6},\,\frac{1}{6},\, \frac{1}{6},\,\frac{1}{6},\,\frac{1}{6}\right).
	$$
	gives a $\lambda_1$-minimal flat $3$-torus in $\mathbb{S}^{11}$, which  is irreducible, linearly full and has volume $\frac{8\sqrt 6}{9}\pi^3$.
\end{example}
We will prove in the next section that Example~\ref{ex:3-prod} $\sim$ Example~\ref{ex:3-3} enumerate all examples of $\lambda_1$-minimal flat $3$-tori in spheres.
\begin{example}\label{ex:4-prod}
Taking the direct products (as in \eqref{eq-prod}) of two  $\lambda_1$-minimal flat $2$-tori, 
one can obtain three reducible   $\lambda_1$-minimal flat $4$-tori in $\mathbb{S}^7$, $\mathbb{S}^9$ and $\mathbb{S}^{11}$, they are all linearly full with volumes $\pi^4$, $\frac{3\sqrt 3}{4}\pi^4$ and $\frac{4}{3}\pi^4$, respectively.

Taking the direct products (as in \eqref{eq-prod}) of one $\lambda_1$-minimal flat $3$-tori given in Example~\ref{ex:3-1} $\sim$ Example~\ref{ex:3-3} with the unite circle in $\mathbb{R}^2$, we can obtain another three  reducible   $\lambda_1$-minimal flat $4$-tori in $\mathbb{S}^9$, $\mathbb{S}^{11}$ and $\mathbb{S}^{13}$, they are all linearly full with volumes  $\frac{2\sqrt 3}{3}\pi^4$, $\frac{4}{3}\pi^4$ and $\sqrt 2\pi^4$, respectively.

For brevity, we omit the data set of these $6$ reducible $\lambda_1$-minimal flat $4$-tori.
\end{example}
\begin{example}\label{ex:5-1}
	$N=5$. The following matrix data 
	$$Q=\left(
	\begin{array}{cccc}
		1 & -\frac{1}{4} & -\frac{1}{4} & -\frac{1}{4} \\
		-\frac{1}{4} & 1 & -\frac{1}{4} & -\frac{1}{4} \\
		-\frac{1}{4} & -\frac{1}{4} & 1 & -\frac{1}{4} \\
		-\frac{1}{4} & -\frac{1}{4} & -\frac{1}{4} & 1 \\
	\end{array}
	\right),~~~
	Y^t=\left(
	\begin{array}{ccccc}
		1 & 0 & 0 & 0 & 1 \\
		0 & 1 & 0 & 0 & 1 \\
		0 & 0 & 1 & 0 & 1 \\
		0 & 0 & 0 & 1 & 1 \\
	\end{array}
	\right)
	,$$$$(c_1^2,\, c_2^2,\, c_3^2,\, c_4^2,\,c_5^2)=\left(\frac{1}{5},\, \frac{1}{5},\,\frac{1}{5},\, \frac{1}{5},\,\frac{1}{5}\right)
	$$
    gives a $\lambda_1$-minimal flat $4$-torus in $\mathbb{S}^{9}$, which is irreducible, linearly full and has volume $\frac{16\sqrt 5}{25}\pi^4$.
\end{example}
\begin{example}\label{ex:6-1}
	$N=6$. The following matrix data 
	$$Q=\left(
	\begin{array}{cccc}
		1 & -\frac{1}{2} & -\frac{1}{6} & -\frac{1}{6} \\
		-\frac{1}{2} & 1 & -\frac{1}{6} & -\frac{1}{6} \\
		-\frac{1}{6} & -\frac{1}{6} & 1 & -\frac{1}{3} \\
		-\frac{1}{6} & -\frac{1}{6} & -\frac{1}{3} & 1 \\
	\end{array}
	\right)
	,~~~
	Y^t=\left(
	\begin{array}{cccccc}
		1 & 0 & 1 & 0 & 0 & 1 \\
		0 & 1 & 1 & 0 & 0 & 1 \\
		0 & 0 & 0 & 1 & 0 & 1 \\
		0 & 0 & 0 & 0 & 1 & 1 \\
	\end{array}
	\right)
	,$$$$(c_1^2,\, c_2^2,\, c_3^2,\, c_4^2,\, c_5^2,\, c_6^2)=\left(\frac{1}{6},\, \frac{1}{6},\,\frac{3}{16},\, \frac{5}{48},\,\frac{3}{16},\,\frac{3}{16}\right)
	$$
	gives a $\lambda_1$-minimal flat $4$-torus in $\mathbb{S}^{11}$, which is irreducible, linearly full and has volume $\frac{3}{2}\pi^4$.
\end{example}
\begin{example}\label{ex:7-1}
	$N=7$. The following matrix data 
	$$Q=\left(
	\begin{array}{cccc}
		1 & -\frac{1}{2} & -\frac{1}{4} & -\frac{1}{4} \\
		-\frac{1}{2} & 1 & -\frac{1}{4} & -\frac{1}{4} \\
		-\frac{1}{4} & -\frac{1}{4} & 1 & \frac{1}{4} \\
		-\frac{1}{4} & -\frac{1}{4} & \frac{1}{4} & 1 \\
	\end{array}
	\right)
	,~~~
	Y^t=\left(
	\begin{array}{ccccccc}
		1 & 0 & 1 & 0 & 1 & 0 & 1 \\
		0 & 1 & 1 & 0 & 1 & 0 & 1 \\
		0 & 0 & 0 & 1 & 1 & 0 & 0 \\
		0 & 0 & 0 & 0 & 0 & 1 & 1 \\
	\end{array}
	\right)
	,$$$$(c_1^2,\, c_2^2,\, c_3^2,\, c_4^2,\, c_5^2,\, c_6^2,\, c_7^2)=\left(\frac{1}{6},\, \frac{1}{6},\, 0,\,\frac{1}{6},\,\frac{1}{6},\,\frac{1}{6},\,\frac{1}{6}\right)
	$$
	gives a $\lambda_1$-minimal flat $4$-torus in $\mathbb{S}^{13}$, which is irreducible, and has volume $\frac{8\sqrt 3}{9}\pi^4$. Note that it is not linearly full in $\mathbb{S}^{13}$ but in $\mathbb{S}^{11}$. As far as we know, this is the first example of $\lambda_1$-minimal immersion in the literature that does not span the whole first eigenspace.
\end{example}
\begin{example}\label{ex:7-2}
	$N=7$. The following matrix data 
	$$Q=\left(
	\begin{array}{cccc}
		1 & -\frac{1}{2} & -\frac{1}{4} & -\frac{1}{8} \\
		-\frac{1}{2} & 1 & -\frac{1}{4} & -\frac{1}{8} \\
		-\frac{1}{4} & -\frac{1}{4} & 1 & -\frac{1}{4} \\
		-\frac{1}{8} & -\frac{1}{8} & -\frac{1}{4} & 1 \\
	\end{array}
	\right)
	,~~~
	Y^t=\left(
	\begin{array}{ccccccc}
		1 & 0 & 1 & 0 & 1 & 0 & 1 \\
		0 & 1 & 1 & 0 & 1 & 0 & 1 \\
		0 & 0 & 0 & 1 & 1 & 0 & 1 \\
		0 & 0 & 0 & 0 & 0 & 1 & 1 \\
	\end{array}
	\right)
	,$$$$(c_1^2,\, c_2^2,\, c_3^2,\, c_4^2,\, c_5^2,\, c_6^2,\, c_7^2)=\left(\frac{1}{6},\, \frac{1}{6},\,\frac{1}{6},\, \frac{1}{12},\,\frac{1}{12},\,\frac{1}{6},\,\frac{1}{6}\right)
	$$
	gives a $\lambda_1$-minimal flat $4$-torus in $\mathbb{S}^{13}$, which is irreducible, linearly full and has volume $\frac{8\sqrt 3}{9}\pi^4$.
\end{example}

\begin{example}\label{ex:7-3}
	$N=7$. The following matrix data 
 $$Q=\left (
  \begin {array} {cccc} 1 &\frac {\sqrt {13} - 
                    7} {12} &\frac {\sqrt {13} - 
                    7} {12} &\frac {\sqrt {13} - 
                    4} {6} \\\frac {\sqrt {13} - 
                    7} {12} & 1 &\frac {1 - \sqrt {13}} {6} &\frac{\sqrt {13} - 7} {12} \\
                    \frac {\sqrt {13} - 
                    7} {12} &\frac {1 - \sqrt {13}} {6} & 1 &\frac{\sqrt {13} - 7} {12} \\\frac {\sqrt {13} - 
             4} {6} &\frac {\sqrt {13} - 7} {12} &\frac {\sqrt {13} - 
       7} {12} & 1 \\\end {array} \right)
	,~~~
	Y^t=
 \left(
\begin{array}{ccccccc}
 1 & 0 & 0 & 1 & 0 & 0 & 1 \\
 0 & 1 & 0 & 1 & 0 & 1 & 1 \\
 0 & 0 & 1 & 1 & 0 & 1 & 1 \\
 0 & 0 & 0 & 0 & 1 & 1 & 1 \\
\end{array}
\right)
	,$$$$(c_1^2,\, c_2^2,\, c_3^2,\, c_4^2,\, c_5^2,\, c_6^2,\, c_7^2)=\left(\frac{\sqrt{13}-2}{12},\,\frac{5-\sqrt{13}}{8},\,\frac{5-\sqrt{13}}{8},\,\frac{\sqrt{13}-2}{12},\, \frac{\sqrt{13}-2}{12},\,\frac{\sqrt{13}-2}{12},\,\frac{5-\sqrt{13}}{12}\right)
	$$
	gives a $\lambda_1$-minimal flat $4$-torus in $\mathbb{S}^{13}$, which is irreducible, linearly full and has volume $\frac{\sqrt {26\sqrt{13}-70}}{3}\pi^4$.  
 
	As far as we know, this is the first example of $\lambda_1$-minimal immersion in the literature whose 
	Gram matrix are not rational, comparing to that minimal tori always have rational Gram matrices up to a rescaling(see \cite{Bryant}).
\end{example}
\begin{example}\label{ex:8-1}
	$N=8$. The following matrix data 
	$$Q=\left(
	\begin{array}{cccc}
		1 & -\frac{1}{2} & 0 & -\frac{1}{4} \\
		-\frac{1}{2} & 1 & -\frac{1}{2} & 0 \\
		0 & -\frac{1}{2} & 1 & -\frac{1}{4} \\
		-\frac{1}{4} & 0 & -\frac{1}{4} & 1 \\
	\end{array}
	\right)
	,~~~
	Y^t=\left(
	\begin{array}{cccccccc}
		1 & 0 & 1 & 0 & 0 & 1 & 0 & 1 \\
		0 & 1 & 1 & 0 & 1 & 1 & 0 & 1 \\
		0 & 0 & 0 & 1 & 1 & 1 & 0 & 1 \\
		0 & 0 & 0 & 0 & 0 & 0 & 1 & 1 \\
	\end{array}
	\right)
	,$$
	$$
	(c_1^2,\, c_2^2,\, c_3^2,\, c_4^2,\, c_5^2,\, c_6^2,\, c_7^2,\, c_8^2)=\left(\frac{1}{8},\, \frac{1}{8},\,\frac{1}{8},\, \frac{1}{8},\,\frac{1}{24},\,\frac{1}{8},\,\frac{1}{6},\,\frac{1}{6}\right)
	$$
	gives a $\lambda_1$-minimal flat $4$-torus in $\mathbb{S}^{15}$, which is irreducible, linearly full and has volume $\frac{2\sqrt  6}{3}\pi^4$.
\end{example}

\begin{example}\label{ex:8-2}
	$N=8$. The following matrix data 
	$$Q=\left(
	\begin{array}{cccc}
		1 & -\frac{1}{2} & \frac{\sqrt{3}}{2}-1 & 1-\frac{\sqrt{3}}{2} \\
		-\frac{1}{2} & 1 & \frac{1}{2}-\frac{\sqrt{3}}{2} & \sqrt{3}-2 \\
		\frac{\sqrt{3}}{2}-1 & \frac{1}{2}-\frac{\sqrt{3}}{2} & 1 & \frac{1}{2}-\frac{\sqrt{3}}{2} \\
		1-\frac{\sqrt{3}}{2} & \sqrt{3}-2 & \frac{1}{2}-\frac{\sqrt{3}}{2} & 1 \\
	\end{array}
	\right)
	,~~~
	Y^t=\left(
	\begin{array}{cccccccc}
		1 & 0 & 1 & 0 & 1 & 0 & 0 & 1 \\
		0 & 1 & 1 & 0 & 1 & 0 & 1 & 1 \\
		0 & 0 & 0 & 1 & 1 & 0 & 1 & 1 \\
		0 & 0 & 0 & 0 & 0 & 1 & 1 & 1 \\
	\end{array}
	\right)
	,$$
	$$
	(c_1^2,\; c_2^2,\; c_3^2,\; c_4^2,\; c_5^2,\; c_6^2,\; c_7^2,\; c_8^2)=\left(\frac{3-\sqrt{3}}{12},\; \frac{\sqrt{3}}{12},\;\frac{\sqrt{3}}{12},\; \frac{3-\sqrt{3}}{12},\;\frac{3-\sqrt{3}}{12},\;\frac{3-\sqrt{3}}{12},\;\frac{\sqrt{3}}{12},\;\frac{\sqrt{3}}{12}\right)
	$$
	gives a $\lambda_1$-minimal flat $4$-torus in $\mathbb{S}^{15}$, which is irreducible, linearly full and has volume $\frac{\sqrt {12+8 \sqrt{3}}}{3}\pi^4$.
	This is another example of $\lambda_1$-minimal immersion whose 
	Gram matrix are not rational.
\end{example}

\begin{example}\label{ex:9-1}
	$N=9$. The following matrix data 
	$$Q=\left(
	\begin{array}{cccc}
		1 & -\frac{1}{2} & 0 & -\frac{1}{6} \\
		-\frac{1}{2} & 1 & -\frac{1}{2} & \frac{1}{6} \\
		0 & -\frac{1}{2} & 1 & -\frac{1}{2} \\
		-\frac{1}{6} & \frac{1}{6} & -\frac{1}{2} & 1 \\
	\end{array}
	\right)
	,~~~
	Y^t=\left(
	\begin{array}{ccccccccc}
		1 & 0 & 1 & 0 & 0 & 1 & 0 & 0 & 1 \\
		0 & 1 & 1 & 0 & 1 & 1 & 0 & 0 & 1 \\
		0 & 0 & 0 & 1 & 1 & 1 & 0 & 1 & 1 \\
		0 & 0 & 0 & 0 & 0 & 0 & 1 & 1 & 1 \\
	\end{array}
	\right)
	,$$
	$$(c_1^2,\; c_2^2,\; c_3^2,\; c_4^2,\; c_5^2,\; c_6^2,\; c_7^2,\; c_8^2,\; c_9^2)=\left(\frac{1}{8},\; \frac{1}{8},\;\frac{1}{12},\; \frac{1}{12},\;\frac{1}{8},\;\frac{1}{12},\;\frac{1}{8},\;\frac{1}{8},\;\frac{1}{8}\right)
	$$
	gives a $\lambda_1$-minimal flat $4$-torus in $\mathbb{S}^{17}$, which is irreducible, linearly full and has volume $\frac{16}{9}\pi^4$.
\end{example}

\begin{example}\label{ex:9-2}
	$N=9$.  The following matrix data 
	$$Q=\left(
	\begin{array}{cccc}
		1 & -\frac{1}{2} & -\frac{1}{4} & \frac{1}{4} \\
		-\frac{1}{2} & 1 & -\frac{1}{4} & -\frac{1}{2} \\
		-\frac{1}{4} & -\frac{1}{4} & 1 & -\frac{1}{4} \\
		\frac{1}{4} & -\frac{1}{2} & -\frac{1}{4} & 1 \\
	\end{array}
	\right)
	,~~~
	Y^t=\left(
	\begin{array}{ccccccccc}
		1 & 0 & 1 & 0 & 1 & 0 & 0 & 1 &0\\
		0 & 1 & 1 & 0 & 1 & 0 & 1 & 1 &1\\
		0 & 0 & 0 & 1 & 1 & 0 & 1 & 1 &0\\
		0 & 0 & 0 & 0 & 0 & 1 & 1 & 1 &1\\
	\end{array}
	\right)
	,$$
	$$(c_1^2,\; c_2^2,\; c_3^2,\; c_4^2,\; c_5^2,\; c_6^2,\; c_7^2,\; c_8^2,\; c_9^2)=\left(\frac{1}{9},\; \frac{1}{9},\;\frac{1}{9},\; \frac{1}{9},\;\frac{1}{9},\;\frac{1}{9},\;\frac{1}{9},\;\frac{1}{9},\;\frac{1}{9}\right)
	$$	
	gives a $\lambda_1$-minimal flat $4$-torus in $\mathbb{S}^{17}$, which is irreducible, linearly full and has volume $\sqrt  3\pi^4$.
\end{example}

\begin{example}\label{ex:10}
	$N=10$. The following matrix data 
	$$Q=\left(
	\begin{array}{cccc}
		1 & -\frac{1}{2} & 0 & 0 \\
		-\frac{1}{2} & 1 & -\frac{1}{2} & 0 \\
		0 & -\frac{1}{2} & 1 & -\frac{1}{2} \\
		0 & 0 & -\frac{1}{2} & 1 \\
	\end{array}
	\right)
	,~~~
	Y^t=\left(
	\begin{array}{cccccccccc}
		1 & 0 & 1 & 0 & 0 & 1 & 0 & 0 & 0 & 1 \\
		0 & 1 & 1 & 0 & 1 & 1 & 0 & 0 & 1 & 1 \\
		0 & 0 & 0 & 1 & 1 & 1 & 0 & 1 & 1 & 1 \\
		0 & 0 & 0 & 0 & 0 & 0 & 1 & 1 & 1 & 1 \\
	\end{array}
	\right)
	,$$
	$$(c_1^2,\; c_2^2,\; c_3^2,\; c_4^2,\; c_5^2,\; c_6^2,\; c_7^2,\; c_8^2,\; c_9^2,\;c_{10}^2)=\left(\frac{1}{10},\; \frac{1}{10},\;\frac{1}{10},\; \frac{1}{10},\;\frac{1}{10},\;\frac{1}{10},\;\frac{1}{10},\;\frac{1}{10},\;\frac{1}{10},\;\frac{1}{10}\right)
	$$
	gives a $\lambda_1$-minimal flat $4$-torus in $\mathbb{S}^{19}$, which is irreducible, linearly full and has volume $\frac{4}{\sqrt 5}\pi^4$.
\end{example}
We will prove in the next section that Example~\ref{ex:4-prod} $\sim$ Example~\ref{ex:10}, and Example~\ref{ex-4tori}  enumerate all examples of $\lambda_1$-minimal flat $4$-tori in spheres.
\section{Shortest vectors in lattices}\label{sec-lattice}

As mentioned in the introduction, 
the classification of $\lambda_1$-minimal tori of dimension $3$ and $4$ in spheres relies on deep investigation of shortest lattice vectors for lattices of rank $3$ and $4$. This section is devoted to the discussion of some related properties of lattices, which are of independent  interest.


Let $\Lambda^*_n$ be a lattice of rank $n$,  
$N$ be the number of distinct shortest lattice vectors of $\Lambda^*_n$ up to $\pm1$,  
and $\Xi$ 
be the set of these $N$ shortest vectors, which are denoted by
$$\xi_1, \xi_2, \cdots,\xi_N.$$
For simplicity, in the sequel, these shortest vectors are assumed to be of unit length. 

\begin{definition}
A set of $k+1$ vectors in $\mathbb{R}^n$ is called a 
{\bf \em generic $(k+1)$-tuple} if it is of rank $k$ and any $k$ vectors in it are linearly independent.
\end{definition}
\begin{definition}
    A lattice $\Lambda_n^*$ with $N\geq n$ 
    is called {\bf \em s-reducible} 
    if there is a non-trivial decomposition $\mathbb{R}^n=V_1\oplus V_2$ such that $$
    \Xi=(\Xi\cap V_1)\cup (\Xi\cap V_2).$$ It will be called {\bf\em s-irreducible} otherwise.   
\end{definition}
It is easy to see that 
if $N=n$, or $N=n+1$ and $\Xi$ is not a generic $(n+1)$-tuple, then $\Lambda_n^*$ is s-reducible.
\begin{lemma}\label{lem:irreducible}
For an s-irreducible lattice $\Lambda_n^*$ of rank $n\geq 3$, 
there always exists a generic $k$-tuple in $\Xi$ for some $k\geq 4$.
\end{lemma}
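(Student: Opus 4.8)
The plan is to restate the claim in the language of the linear matroid $M$ on the ground set $\Xi=\{\xi_1,\dots,\xi_N\}$ and reduce it to a classical incidence theorem. Note first that a \emph{generic $k$-tuple} is precisely a circuit of size $k$ in $M$: a dependent set of rank $k-1$ all of whose proper subsets are independent. Since the $\xi_i$ are distinct unit vectors modulo sign, no two are proportional, so $M$ is simple and every circuit has at least three elements. Thus the lemma is equivalent to the assertion that $M$ has a circuit of size at least $4$, and I would argue by contradiction: assume that every circuit of $M$ has size exactly $3$.

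Next I would unwind the hypothesis of s-irreducibility. First, s-irreducibility forces $\Xi$ to span $\mathbb{R}^n$: otherwise one could take $V_1=\operatorname{Span}_{\mathbb{R}}\Xi$ and $V_2$ any complement to exhibit an s-reducing decomposition. Given that $\Xi$ spans, a decomposition $\mathbb{R}^n=V_1\oplus V_2$ with $\Xi=(\Xi\cap V_1)\cup(\Xi\cap V_2)$ is the same thing as a splitting of $M$ as a direct sum of two nonempty matroids; here one checks by a dimension count that $\operatorname{Span}_{\mathbb{R}}(\Xi\cap V_i)=V_i$ for $i=1,2$. Hence s-irreducibility is equivalent to $M$ being a connected matroid. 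By the standard characterization of matroid connectivity, every pair $\xi_i,\xi_j$ then lies in a common circuit, which under the contradiction hypothesis has size $3$: there is a third shortest vector $\xi_k\in\operatorname{Span}_{\mathbb{R}}\{\xi_i,\xi_j\}$ with $\xi_k\notin\mathbb{R}\xi_i\cup\mathbb{R}\xi_j$.

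Finally I would invoke the \emph{Sylvester--Gallai theorem}. Passing to the projective classes $[\xi_i]\in\mathbb{RP}^{n-1}$, the previous step says that the projective line through any two of the points $[\xi_i]$ contains a third. The real Sylvester--Gallai theorem (whose higher-dimensional form follows from the planar case by restricting to the plane spanned by any three non-collinear points) then forces all the $[\xi_i]$ to be collinear, i.e.\ all $\xi_i$ lie in a single $2$-dimensional subspace. This contradicts $\operatorname{rank}\Xi=n\geq 3$, so $M$ must possess a circuit of size $\geq 4$, as required. I expect the main technical care to lie not in the incidence geometry but in the first two reductions: pinning down the exact equivalence between s-irreducibility and matroid connectivity (in particular the role of the spanning condition), and checking that simplicity of $M$ really rules out circuits of size $2$. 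Should one prefer to avoid citing Sylvester--Gallai, the same contradiction can be reached by an elementary extremal argument, choosing a line carrying the maximal number of the points and analysing the points lying off it, which in the real setting forces a near-pencil and hence an s-reducing (disconnecting) decomposition.
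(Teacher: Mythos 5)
Your proof is correct, but it takes a genuinely different route from the paper's. Your two flagged reductions do go through: s-irreducibility forces $\mathrm{Span}_{\mathbb{R}}\Xi=\mathbb{R}^n$, and once $\Xi$ spans, a decomposition $\mathbb{R}^n=V_1\oplus V_2$ with $\Xi=(\Xi\cap V_1)\cup(\Xi\cap V_2)$ is equivalent to a direct-sum splitting of the linear matroid on $\Xi$ (the dimension count $\dim V_1+\dim V_2=n$ forces $\mathrm{Span}_{\mathbb{R}}(\Xi\cap V_i)=V_i$, and $V_1\cap V_2=\{0\}$ makes the rank additive across the two parts), while simplicity holds because the $\xi_i$ are distinct up to sign and of equal length, so circuits of size $\leq 2$ are impossible. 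Whitney's characterization then puts every pair $\xi_i,\xi_j$ in a common circuit, necessarily of size $3$ under your contradiction hypothesis, and Sylvester--Gallai applied to the points $[\xi_i]\in\mathbb{R}P^{n-1}$ forces collinearity, contradicting $\mathrm{rank}\,\Xi=n\geq 3$. The paper argues instead by an explicit construction: fixing $n$ linearly independent vectors of $\Xi$, it notes that under the same contradiction hypothesis every further $\xi_i$ has exactly two nonzero coefficients (its fundamental circuit has size $3$), uses s-irreducibility once to get $N\geq n+2$ with some $\xi_{n+1}=a_1\xi_1+a_2\xi_2$, a second time to get $\xi_{n+2}$ lying in neither $\mathrm{Span}\{\xi_1,\xi_2\}$ nor $\mathrm{Span}\{\xi_3,\cdots,\xi_n\}$, say $\xi_{n+2}=b_1\xi_1+b_2\xi_3$, and then checks directly that $\{\xi_2,\xi_3,\xi_{n+1},\xi_{n+2}\}$ is a generic $4$-tuple. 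The trade-off: the paper's argument is short, self-contained and purely linear-algebraic --- in particular it never uses that the field is $\mathbb{R}$ --- whereas yours imports two classical theorems and, via Sylvester--Gallai, uses realness essentially (that incidence step fails over $\mathbb{C}$ and over finite fields, e.g.\ for the Fano configuration, even though the lemma itself survives there by the paper's argument); in exchange, your formulation isolates the clean general statement behind the lemma, namely that a simple connected matroid of rank $\geq 3$ representable over $\mathbb{R}$ contains a circuit of length $\geq 4$. Note also that your proposed elementary fallback (a maximal-line extremal argument) is essentially a re-proof of Sylvester--Gallai, so it does not change this character.
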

\begin{proof}
 Let  $\{\xi_1,\cdots,\xi_n\}\subset \Xi$ be a generator of $\Lambda^*_n$. Suppose the opposite that 
 there are only generic $3$-tuples 
 in $\Xi$. Then for any $n+1\leq i\leq N$, when we write $\xi_i$ as a linear combination of these generator vectors, there are exactly two non-zero coefficients. From the s-irreducible assumption we know $N\geq n+2$. 
 We can assume $\xi_{n+1}=a_1\xi_1+a_2\xi_2$  with $a_1a_2\not=0$. Consider two subspaces $V_1\triangleq\mathrm{Span}\{\xi_3,\cdots,\xi_n\}$ and $V_2\triangleq \mathrm{Span}\{\xi_1,\xi_2\}$, it follows that there exists at least one vector, say $\xi_{n+2}$, belongs to neither $V_1$ nor $V_2$. 
 We assume  $\xi_{n+2}=b_1\xi_1+b_2\xi_3$ with $b_1b_2\not=0$. Then it is straightforward to verify that  $\{\xi_2,\xi_3,\xi_{n+1},\xi_{n+2}\}$ forms a generic $4$-tuple, which gives us a contradiction.
\end{proof}

\begin{definition}\label{def:prime}
	
	A lattice $\Lambda_n^*$ 
 is called {\bf \em prime} if 
 this lattice can be generated by any $n$ linearly independent vectors in $\Xi$. 
\end{definition}
\begin{remark}\label{rk-pruni}
By definition, for a prime lattice, $\Xi$ always satisfies the unimodular condition defined in Definition~\ref{def-unimo}.
\end{remark}
The following conclusion is easy to obtain.
\begin{lemma}\label{lem:prime}
	Let $\Lambda_n^*$ be a prime lattice generated by 
	$\{\alpha_1,\cdots,\alpha_n\}$. 
	Then for any $\xi\in \Xi$, we have
	\[
	\xi=a_1\alpha_1+\cdots+a_n\alpha_n,\quad a_i\in \{0,\pm 1\}.
	\]
\end{lemma}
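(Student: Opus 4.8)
The plan is to fix a shortest vector $\xi\in\Xi$, express it in the integer coordinates supplied by the generating set $\{\alpha_1,\ldots,\alpha_n\}$, and then bound each coordinate by a wedge-product (volume) comparison against the unimodular condition. First I would note that $\{\alpha_1,\ldots,\alpha_n\}$ is a set of $n$ linearly independent vectors of $\Xi$, so primality (Definition~\ref{def:prime}) guarantees that they generate $\Lambda_n^*$. Since $\xi\in\Xi\subset\Lambda_n^*$, it therefore admits a representation $\xi=a_1\alpha_1+\cdots+a_n\alpha_n$ with all $a_i\in\mathbb{Z}$, and it only remains to show $|a_i|\le 1$ for each $i$.

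Fix an index $i$ with $a_i\neq 0$. I would replace $\alpha_i$ by $\xi$ in the generating set and observe that the tuple $\{\alpha_1,\ldots,\alpha_{i-1},\xi,\alpha_{i+1},\ldots,\alpha_n\}$ is again linearly independent: a nontrivial dependence would place $\xi$ in the span of the remaining $\alpha_j$, contradicting $a_i\neq 0$. Wedging the representation of $\xi$ with the remaining generators, every term $a_j\alpha_j$ with $j\neq i$ vanishes because $\alpha_j$ already occurs, leaving
$$\alpha_1\wedge\cdots\wedge\alpha_{i-1}\wedge\xi\wedge\alpha_{i+1}\wedge\cdots\wedge\alpha_n=a_i\,\alpha_1\wedge\cdots\wedge\alpha_n.$$
Thus the $n$-dimensional volume of the parallelepiped spanned by the replaced tuple equals $|a_i|$ times that spanned by $\{\alpha_1,\ldots,\alpha_n\}$.

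The key step is to invoke the unimodular condition. By Remark~\ref{rk-pruni}, $\Xi$ satisfies the unimodular condition of Definition~\ref{def-unimo}, so all $n$-dimensional parallelepipeds spanned by vectors of $\Xi$ share one common nonzero volume. Both tuples above consist of $n$ vectors from $\Xi$ and have full rank $n$ (hence both span $\mathbb{R}^n$), so their volumes coincide. Comparing with the wedge identity forces $|a_i|=1$. As the index $i$ ranged over all coordinates with $a_i\neq 0$, we conclude $a_i\in\{0,\pm 1\}$ for every $i$, which is the assertion.

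I do not expect a genuine obstacle here; once the unimodular condition is available the lemma is essentially immediate. The only points deserving care are the verification that the replaced tuple is linearly independent, so that it genuinely spans an $n$-dimensional (nondegenerate) parallelepiped to which the unimodular condition applies, and checking that the wedge computation correctly isolates the single coefficient $a_i$.
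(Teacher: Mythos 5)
Your proof is correct. The paper states this lemma without proof (it is introduced as ``easy to obtain''), and your argument is exactly the intended one: replace $\alpha_i$ by $\xi$ when $a_i\neq 0$, note the replaced tuple is linearly independent, and compare volumes via the unimodular condition of Remark~\ref{rk-pruni} (equivalently, primality makes the replaced tuple a generator, so the transition matrix has determinant $a_i=\pm1$); your care in reading the hypothesis as $\alpha_i\in\Xi$ is also warranted, since the statement fails for generators outside $\Xi$ (e.g.\ $\Lambda=\mathbb{Z}^2$ with $\alpha_1=2e_1+e_2$, $\alpha_2=e_1$ gives $e_2=\alpha_1-2\alpha_2$).
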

For lattices of lower rank ($\leq 4$), we obtain a sufficient condition to discriminate whether it is prime. The covering radius in lattice theory will be used, which is defined for a lattice $\Lambda_n^*$ as follows:
$$\mu(\Lambda_n^*)\triangleq \inf\Big{\{}r\,\Big{|}\,\mathbb{R}^n= \cup_{p\in\Lambda_n^*} B^n(p,r)\Big{\}},$$
where $B^n(p,r)$ is the ball centered at $p$ with radius $r$. Any ball with radius larger than $\mu(\Lambda^*_
n)$ must contain a point of $\Lambda_n^*$.
The following well-known estimate (see \cite{Gruber}) will be used to prove our main 
theorem in this section.
\begin{lemma}\label{lem-covr}
Let  $\Lambda_n^*$ be a lattice of rank $n$, if ${\rm rank}(\Xi)=n$ and the shortest vectors of   $\Lambda_n^*$ are of unit length, 
  then
 $\mu(\Lambda_n^*)\leq\frac{{\sqrt{n}}}{{2}}.$
\end{lemma}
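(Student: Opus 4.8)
The plan is to reduce the estimate to a full-rank sublattice generated by unit shortest vectors, and then invoke the standard Gram--Schmidt (``nearest plane'') bound on the covering radius. First I would use the hypothesis ${\rm rank}(\Xi)=n$ to choose $n$ linearly independent vectors $\xi_{1},\dots,\xi_{n}\in\Xi$, each of unit length, and set $L=\mathrm{Span}_{\mathbb{Z}}\{\xi_{1},\dots,\xi_{n}\}\subseteq \Lambda_n^*$. Since $L\subseteq\Lambda_n^*$, for every point $x\in\mathbb{R}^n$ the distance from $x$ to $\Lambda_n^*$ is no larger than its distance to $L$; taking the supremum over $x$ shows that the covering radius is monotone under this inclusion, i.e. $\mu(\Lambda_n^*)\le\mu(L)$. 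Hence it suffices to bound $\mu(L)$ for a lattice admitting a basis of unit vectors.

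The core step is the classical inequality $\mu(L)\le\tfrac12\sqrt{\sum_{k=1}^n|\xi_k^{*}|^2}$, where $\{\xi_1^{*},\dots,\xi_n^{*}\}$ denotes the Gram--Schmidt orthogonalization of $\{\xi_1,\dots,\xi_n\}$, so that $\xi_k^{*}$ is the component of $\xi_k$ orthogonal to $\mathrm{Span}\{\xi_1,\dots,\xi_{k-1}\}$. I would prove this by the nearest-plane recursion: writing $L=\bigcup_{c\in\mathbb{Z}}(c\,\xi_n+L')$ with $L'=\mathrm{Span}_{\mathbb{Z}}\{\xi_1,\dots,\xi_{n-1}\}$, the cosets lie in affine hyperplanes parallel to $\mathrm{Span}\{\xi_1,\dots,\xi_{n-1}\}$ and spaced $|\xi_n^{*}|$ apart along the direction $\xi_n^{*}$. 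Given an arbitrary target $x$, I choose the integer $c$ so that the $\xi_n^{*}$-component of $x-c\,\xi_n$ has length at most $|\xi_n^{*}|/2$, then orthogonally project $x-c\,\xi_n$ onto $\mathrm{Span}\{\xi_1,\dots,\xi_{n-1}\}$ and recurse on $L'$. Because the $n$ contributions accumulated this way lie in mutually orthogonal directions, summing their squares produces a lattice point $v$ with $|x-v|^2\le\tfrac14\sum_{k=1}^n|\xi_k^{*}|^2$, which gives the stated bound on $\mu(L)$.

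Finally, Gram--Schmidt projection never increases length, so $|\xi_k^{*}|\le|\xi_k|=1$ for every $k$, whence $\sum_{k=1}^n|\xi_k^{*}|^2\le n$ and $\mu(L)\le\frac{\sqrt{n}}{2}$. Combining this with $\mu(\Lambda_n^*)\le\mu(L)$ yields $\mu(\Lambda_n^*)\le\frac{\sqrt{n}}{2}$, as claimed. I expect the only genuinely nontrivial step to be the Gram--Schmidt covering bound, where the bookkeeping in the nearest-plane recursion must be arranged so that the residuals add up orthogonally in squares (a naive triangle-inequality estimate only gives the far weaker bound $n/2$). The passage to the sublattice, the monotonicity of $\mu$, and the inequality $|\xi_k^{*}|\le 1$ are all routine.
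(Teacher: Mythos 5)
Your proof is correct. Note that the paper itself does not prove this lemma at all: it is stated as a ``well-known estimate'' with a citation to Gruber's Handbook chapter on the geometry of numbers, so there is no in-paper argument to match against. What you have written is essentially the standard proof that lies behind the cited estimate, and all three steps check out: the reduction to the full-rank sublattice $L=\mathrm{Span}_{\mathbb{Z}}\{\xi_1,\dots,\xi_n\}$ together with the monotonicity $\mu(\Lambda_n^*)\le\mu(L)$ (valid precisely because ${\rm rank}(\Xi)=n$ makes $L$ full rank, so every coset argument applies in all of $\mathbb{R}^n$); the nearest-plane recursion giving $\mu(L)\le\tfrac12\bigl(\sum_{k=1}^n|\xi_k^{*}|^2\bigr)^{1/2}$, where you correctly arrange the residuals to lie along the mutually orthogonal directions $\xi_k^{*}$ so that they add in squares — as you observe, this orthogonality is the one genuinely nontrivial point, since a triangle-inequality estimate would only give $n/2$; and finally $|\xi_k^{*}|\le|\xi_k|=1$, which uses the unit-length hypothesis (the minimality of the $\xi_k$ is not otherwise needed). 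Your self-contained derivation thus buys independence from the external reference at modest cost, and it is consistent with how the lemma is used later in the paper's proof of Theorem~\ref{thm-gen}, where the induction there in effect reruns the $k=1$ step of your recursion with the quantitative threshold $r^2\ge 3/4$.
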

\begin{theorem}\label{thm-gen}
	Suppose $\Lambda_n^*$ is a lattice of rank $n$ with ${\rm rank}(\Xi)$=n. 
	If $n\leq 4$, then either $\Lambda_n^*$ is prime, or it can be generated by 
	the row vectors of
\begin{equation}\label{eq:exception}
		\bgm
	1&0&0&0\\
	0&1&0&0\\
	0&0&1&0\\
	\frac{1}{2}&\frac{1}{2}&\frac{1}{2}&\frac{1}{2}
	\edm.
\end{equation}
\end{theorem}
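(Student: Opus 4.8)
The plan is to prove the dichotomy by a covolume comparison. Fix any $n$ linearly independent shortest vectors $\alpha_1,\dots,\alpha_n\in\Xi$ (these exist since $\mathrm{rank}(\Xi)=n$), let $\Lambda'=\mathbb{Z}\alpha_1+\cdots+\mathbb{Z}\alpha_n$ be the sublattice they generate, and put $m=[\Lambda_n^*:\Lambda']$. On one side, $\mathrm{covol}(\Lambda')=\sqrt{\det(\langle\alpha_i,\alpha_j\rangle)}$, and since each $|\alpha_i|=1$, Hadamard's inequality gives $\mathrm{covol}(\Lambda')\le 1$, with equality exactly when the $\alpha_i$ are orthonormal. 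On the other side, since the minimal norm of $\Lambda_n^*$ is $1$, the classical value of Hermite's constant $\gamma_n$ yields $\mathrm{covol}(\Lambda_n^*)\ge \gamma_n^{-n/2}$, namely $\sqrt3/2,\ 1/\sqrt2,\ 1/2$ for $n=2,3,4$. Dividing, $m=\mathrm{covol}(\Lambda')/\mathrm{covol}(\Lambda_n^*)\le \gamma_n^{n/2}$, which equals $2/\sqrt3,\ \sqrt2,\ 2$ for $n=2,3,4$ respectively.

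For $n\le 3$ we have $\gamma_n^{n/2}<2$, so $m=1$: the chosen vectors generate $\Lambda_n^*$. As the choice of $n$ linearly independent shortest vectors was arbitrary, Definition~\ref{def:prime} shows $\Lambda_n^*$ is prime and the theorem holds. For $n=4$ we only get $m\le 2$. If every choice of four linearly independent shortest vectors yields $m=1$, then again $\Lambda_4^*$ is prime. Otherwise some choice $\alpha_1,\dots,\alpha_4$ gives $m=2$, and then both inequalities above are forced to be equalities: $\mathrm{covol}(\Lambda_4^*)=\tfrac12$ and $\mathrm{covol}(\Lambda')=1$. Equality in Hadamard means $\alpha_1,\dots,\alpha_4$ are orthonormal, so after an orthogonal change of coordinates we may assume $\alpha_i=e_i$, whence $\Lambda'=\mathbb{Z}^4$.

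It then remains to identify the index-$2$ overlattice $\Lambda_4^*\supset\mathbb{Z}^4$. Pick $w\in\Lambda_4^*\setminus\mathbb{Z}^4$; since $m=2$ we have $2w\in\mathbb{Z}^4$, so modulo $\mathbb{Z}^4$ the unique nontrivial coset has a representative $w'=\tfrac12(b_1,b_2,b_3,b_4)$ with $b_i\in\{0,1\}$ and $w'\in\Lambda_4^*\setminus\{0\}$. Then $|w'|^2=\tfrac14\#\{i:b_i=1\}\ge 1$ because the minimal norm is $1$, which forces all four $b_i=1$, i.e. $w'=\tfrac12(1,1,1,1)$. Hence $\Lambda_4^*=\mathbb{Z}^4+\mathbb{Z}\,\tfrac12(1,1,1,1)$, and since $e_4=2w'-e_1-e_2-e_3$, this lattice is generated by the row vectors of \eqref{eq:exception}, as claimed.

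I expect the main obstacle to be the lower bound on $\mathrm{covol}(\Lambda_4^*)$. The elementary packing bound only gives $\mathrm{covol}(\Lambda_4^*)\ge \omega_4/2^4=\pi^2/32$, which permits $m$ up to $3$ and is too weak; one genuinely needs the sharp value $\gamma_4=\sqrt2$ (the optimality of $D_4$). Care is also required to verify that the two inequalities become equalities \emph{simultaneously} in the boundary case $m=2$, since that is exactly what upgrades the estimate into the orthonormality of the $\alpha_i$. A convenient feature of this route is that the Hadamard-equality condition together with the minimal-norm coset computation pins the lattice down to \eqref{eq:exception} directly, so one avoids invoking the full uniqueness of the densest rank-$4$ lattice. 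Note finally that Lemma~\ref{lem-covr} controls the covering radius, hence bounds $\mathrm{covol}$ from \emph{above}, and so is not the tool needed here; the essential input is the density/packing estimate encoded in $\gamma_n$.
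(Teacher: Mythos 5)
Your proposal is correct, and it takes a genuinely different route from the paper. The paper argues by induction on the rank: given any $n$ linearly independent shortest vectors, it extends $n-1$ of them (via the inductive hypothesis) to a basis $\{\eta_1,\xi_2,\dots,\xi_n\}$, normalizes the leading coefficient $a_1$ of $\xi_1$, and, when $a_1\geq 2$, slices the unit ball by the affine hyperplane $\eta_1^{\perp}+\mathrm{Span}\{\xi_2,\dots,\xi_n\}$ to obtain an $(n-1)$-ball of radius $r$ with $r^2\geq \tfrac34$; comparing $r$ with the covering radius bound $\mu\leq\tfrac{\sqrt{n-1}}{2}$ of Lemma~\ref{lem-covr} yields a lattice point shorter than $1$ for $n\leq 3$, and for $n=4$ forces the equality case $r^2=\tfrac34$, from which $a_1=2$, $\xi_1^{\top}=0$, and (via a parallelogram identity) orthonormality of $\{\xi_1,\dots,\xi_4\}$ follow, producing \eqref{eq:exception}. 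You instead run a single global covolume comparison: Hadamard gives $\mathrm{covol}(\Lambda')\leq 1$, Hermite's constant gives $\mathrm{covol}(\Lambda_n^*)\geq\gamma_n^{-n/2}$, so the index satisfies $m\leq\gamma_n^{n/2}$, which is $<2$ for $n\leq3$ and $=2$ at $n=4$; your equality analysis (both bounds attained simultaneously when $m=2$, Hadamard rigidity forcing orthonormality, then the minimal-norm classification of the index-$2$ coset) is carried out correctly and pins down \eqref{eq:exception} without invoking uniqueness of the densest rank-$4$ lattice. The trade-off: the paper's induction is self-contained modulo the elementary covering-radius estimate, whereas your key input $\gamma_4=\sqrt2$ (optimality of $D_4$, Korkine--Zolotareff) is a substantially deeper classical theorem — as you yourself note, the elementary Minkowski bound only gives $m\leq 3$ at $n=4$ (though it does suffice for $n\leq 3$, since it yields $\gamma_3^{3/2}<2$). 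What your route buys is brevity and a transparent explanation of why $n=4$ is the exact threshold where the exceptional lattice appears: it is precisely where $\gamma_n^{n/2}$ first reaches $2$.
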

\begin{proof} It is obviously true for $n=1$. Suppose it is true for $n=k-1$ ($\leq 3$), we will show that in the case of $n=k$ the conclusion is also true. 
Let $\{\xi_1, \xi_2,\cdots,\xi_k\}$ be any given linearly independent shortest vectors in $\Lambda_k^*$. Consider the sublattice $\Lambda_{k-1}^*$ of rank $k-1$ containing $\{\xi_2,\cdots,\xi_k\}$.
Then it follows from the inductive hypothesis that there exists a generator $\{\et_1,\xi_2,\cdots,\xi_k\}$ of $\Lambda_k^*$ such that
	\[
	\begin{pmatrix}
		\xi_1\\
		\xi_2\\\vdots\\
		\xi_k
	\end{pmatrix}=\begin{pmatrix}
	a_{1}&a_{2}&\cdots&a_{k}\\
	&1&&\\
	&&\ddots&\\
	&&&1
\end{pmatrix}\begin{pmatrix}
\et_1\\
\xi_2\\\vdots\\
\xi_k
\end{pmatrix},
	\]
where $a_1, a_2, \cdots, a_k\in \mathbb{Z}$,  we can assume they are all non-negative and $a_1>0$ by changing directions of these vectors. Since ${\et_1+c\xi_2,\xi_2,\cdots,\xi_k}$ $(c\in \mathbb{Z})$ is still a a generator of $\Lambda_k^*$, we get
\[
\xi_1=a_1(\et_1+c\xi_2)+(a_2-ca_1)\xi_2+\cdots+a_k\xi_k.
\]
So suitable value $c$ will make $a_1>a_2-ca_1\geq 0$. Therefore,
 we may assume $a_1>a_i\geq 0$ for $2\leq i\leq k$.

If $a_1=1$ then $a_i=0$ for $2\leq i\leq k$ and we derive that ${\xi_1, \xi_2,\cdots,\xi_k}$ is a generator of $\Lambda_k^*$. If $a_1\geq 2$,
let $\et_1=\eta_1^{\perp}+\eta_1^{\top}$, where $\top$ (w.r.t. $\perp$) 
denotes the orthogonal projection 
onto $\Pi_0\triangleq\mathrm{Span}_\mathbb{R}\{\xi_2,\cdots,\xi_k\}$ (w.r.t. the normal space of $\Pi_0$ 
). So $\xi_1=a_1\eta_1^{\perp}+\xi_1^{\top}$ and
\[0<
|\eta_1^{\perp}|^2=\frac{|\xi_1|^2-|\xi_1^{\top}|^2}{a_1^2}=\frac{1-|\xi_1^{\top}|^2}{a_1^2}\leq \frac{1}{4}.
\]
It follows that the intersection of $k$-ball $B^k(0,1)$ with the hyperplane $\Pi\triangleq\eta_1^\perp+\Pi_0
$ is a $(k-1)$-ball $B^{k-1}(\eta_1^\perp, r)$ with
$$r^2=\frac{a_1^2-1}{a_1^2}+\frac{|\xi_1^\top|^2}{a_1^2}\geq\frac{3}{4}.$$

When $k<4$,  we can see $r$ is larger than the covering radius of $\Lambda_{k-1}^*$. Therefore, in $B^{k-1}(\eta_1^\perp, r)\subset B^k(0,1)$, there exists at least one  point of 
$\Lambda_k^*\cap\Pi$. 
This contradicts with our assumption that $1$ is the shortest length of $\Lambda_k^*$. 
Similarly, in the case of $k=4$, if $r^2>\frac{3}{4}$, we can also obtain a contradiction.

Therefore, when $k=4$, we have $r^2=\frac{3}{4}$. This yields $a_1=2$ and $\xi_1^\top=0$. Moreover, $a_2=a_3=a_4=1$. Otherwise, say $a_4=0$, then
\[
\begin{pmatrix}
	\xi_1\\
	\xi_2\\
	\xi_3
\end{pmatrix}=\begin{pmatrix}
2&1&1\\
&1&\\
&&1
\end{pmatrix}\begin{pmatrix}
\et_1\\
\xi_2\\
\xi_3
\end{pmatrix},
\]
 which implies in the sublattice $\mathrm{Span}_\mathbb{Z}\{\eta_1, \xi_2, \xi_3\}$, 
 the shortest vectors $\xi_1,\xi_2,\xi_3$ can not form a generator,  
  which contradicts with the induction hypothesis. 

Note that $\xi_1^\top=0$ means $\xi_1$ is orthogonal to other $\xi_i$. It follows from $2\et_1=\xi_1-\xi_2-\xi_3-\xi_4$ and $|\eta_1|\geq 1$ that  $|\xi_2+\xi_3+\xi_4|\geq\sqrt{3}$.  Moreover, using
\[
\left|\frac{\xi_1+\xi_2-\xi_3-\xi_4}{2}\right|=|\eta_1+\xi_2|\geq 1,
\]
we have $|\xi_2-\xi_3-\xi_4|\geq \sqrt{3}$. Similarly, $|\xi_2-\xi_3+\xi_4|\geq \sqrt{3}$ and $|\xi_2+\xi_3-\xi_4|\geq \sqrt{3}$. 
Combining these with the following identities,
\[
|\xi_2+\xi_3+\xi_4|^2+|\xi_2-\xi_3-\xi_4|^2+|\xi_2-\xi_3+\xi_4|^2+|\xi_2+\xi_3-\xi_4|^2=4(|\xi_2|^2+|\xi_3|^2+|\xi_4|^2)=12,
\]
we can derive that
\[
|\xi_2+\xi_3+\xi_4|^2=|\xi_2-\xi_3-\xi_4|^2=|\xi_2-\xi_3+\xi_4|^2=|\xi_2+\xi_3-\xi_4|^2=3
\]
which implies $\{\xi_1, \xi_2, \xi_3, \xi_4\}$ forms an orthonormal basis of $\mathbb{R}^4$ and we complete the proof of this theorem.
\end{proof}


Given a lattice $\Lambda_n^*$ of rank $n$, to investigate the set $\Xi$ constituted by all shortest lattice vectors up to $\pm1$, we can consider the intersections of it with all sublattices of rank $n-1$. 
Let 
\begin{equation}\label{eq-mxi}
m(\Xi)
\triangleq\max \Big\{\sharp(\Xi\cap \tilde\Lambda)\,\Big|\,\tilde\Lambda\subset\Lambda_n^* \text{~is a sublattice of rank}~ n-1\Big\},
\end{equation}
and $\Xi'$ be one of the intersection attaining $m(\Xi)$.
Note that there may be more than one such intersections.  

In the rest part of this section, we always assume $\Lambda_n^*$ is a prime lattice of rank $n$. For such lattice, we have $m(\Xi)\geq n-1$. Let $\Xi'$ be a chosen intersection attaining $m(\Xi)$,
we assume that
$$\Xi'=\{\xi_1,\xi_2,\cdots,\xi_{m(\Xi)}\},$$
and $\Lambda_n^{*}$ is generated by $\{\xi_1, \xi_2,\cdots,\xi_{n-1}, \xi_{m(\Xi)+1}\}$.
Then there exist $N$ integer vectors $$\{A_i=(a_{i1}, a_{i2}, \cdots,a_{in})\}_{i=1}^N\subset \mathbb{Z}^n,$$ such that
\beq\label{eq:canonical}
\bgm
\xi_1\\
\xi_2\\
\vdots\\
\xi_N
\edm=\bgm
A_1\\
A_2\\
\vdots\\
A_N
\edm \bgm
\xi_1\\
\xi_2\\
\vdots\\
\xi_{n-1}\\
\xi_{m(\Xi)+1}
\edm.
\eeq
Obviously, $\{A_1,A_2,\cdots,A_{n-1},A_{m(\Xi)+1}\}$ is the standard basis of $\mathbb{R}^n$. It follows from the prime assumption and Lemma~\ref{lem:prime} that $$a_{ji}\in \{0,\pm1\},~~~1\leq j\leq N, 1\leq i\leq n.$$
Moreover, by changing the direction of some vectors in $\Xi$ if necessary, we can assume the last coordinate of $A_i$ equals $0$ for $1\leq i\leq m(\Xi)$, and $1$ for $m(\Xi)+1\leq i\leq N$. 

We will still abuse the notation $Y$ to denote either the set constituted by $A_i$, or the matrix constructed by $A_i$ as in \eqref{eq:canonical}.  
Note that the prime assumption on $\Xi\subset \Lambda_n^*$ is equivalent to say that any $n$ linearly independent vectors in $Y$ 
form a generator of $\mathbb{Z}^n$.
For applications in the next section, we conclude some further properties of prime lattices in the following three lemmas.
\begin{lemma}\label{lem-0}
For a prime lattice, in terms of matrix, 
all minors of $Y$ can only take values in $\{0, \pm1\}$.
\end{lemma}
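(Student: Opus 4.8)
The plan is to show that the integer matrix $Y$ is totally unimodular, i.e.\ that every $k\times k$ minor lies in $\{0,\pm1\}$, by reducing an arbitrary minor to the full-size $n\times n$ minors, where the prime hypothesis applies directly. Two facts recorded in the construction preceding the lemma are the whole engine of the argument, so I would begin by isolating them.

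First, the rows $A_1,\dots,A_{n-1},A_{m(\Xi)+1}$ are exactly the standard basis $e_1,\dots,e_n$ of $\mathbb{R}^n$; in particular every standard basis vector occurs as a row of $Y$. Second, since $\{\xi_1,\dots,\xi_{n-1},\xi_{m(\Xi)+1}\}$ is a generator of $\Lambda_n^*$, the prime hypothesis says precisely that, whenever $\xi_{i_1},\dots,\xi_{i_n}$ are linearly independent, the corresponding rows $A_{i_1},\dots,A_{i_n}$ form the transition matrix between two bases of $\Lambda_n^*$, hence lie in $GL(n,\mathbb{Z})$ and have determinant $\pm1$ (by Lemma~\ref{lem:prime} the entries are already in $\{0,\pm1\}$); if the rows are linearly dependent the determinant is $0$. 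Thus every $n\times n$ minor of $Y$ taken over all $n$ columns already lies in $\{0,\pm1\}$.

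Next, for an arbitrary $k\times k$ minor $M$ with row set $R=\{i_1,\dots,i_k\}$ and column set $C$, I would adjoin to the rows $A_{i_1},\dots,A_{i_k}$ the standard basis rows $e_j$ for $j$ ranging over the complementary columns $C'=\{1,\dots,n\}\setminus C$. These are $n-k$ further rows of $Y$, so the resulting $n\times n$ matrix $\tilde M$ is built entirely from rows of $Y$. Each adjoined row $e_j$ has a single nonzero entry, a $1$ in column $j\in C'$; expanding the determinant of $\tilde M$ successively along these rows deletes all the rows $e_j$ and all the columns of $C'$, leaving exactly the submatrix $M$. Hence $\det\tilde M=\pm\det M$. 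Combining this with the $n\times n$ case gives $\det M=\pm\det\tilde M\in\{0,\pm1\}$, and since $M$ was arbitrary the claim follows.

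The only point needing care is this reduction step, namely verifying that the Laplace expansion along the standard basis rows produces $\pm\det M$ with no stray factors; this is the sole place a sign has to be tracked, and it is routine. I do not anticipate a genuine obstacle, since the prime property has been arranged precisely so that the full-size minors carry all the arithmetic, and the embedding of $M$ into an $n\times n$ minor via standard basis rows transfers that control to every smaller minor. One should merely note that any coincidence between an $A_{i_\ell}$ and an adjoined $e_j$ forces a repeated row and hence $\det\tilde M=0$, which is consistent and requires no separate treatment.
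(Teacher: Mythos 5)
Your proposal is correct and takes essentially the same route as the paper: the paper's proof likewise completes the $k$ chosen rows of the minor with the standard-basis rows of $Y$ indexed by the complementary columns (drawn from $\{1,2,\cdots,n-1,m(\Xi)+1\}$), observes that the resulting $n$-fold wedge has absolute value equal to the minor, and concludes from the prime assumption that any $n$ linearly independent rows of $Y$ must generate $\mathbb{Z}^n$, forcing the value into $\{0,\pm1\}$. Your additional bookkeeping (the Laplace-expansion sign and the repeated-row degeneracy) merely spells out what the paper leaves implicit.
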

\begin{proof}
Suppose the opposite that there is a nonzero minor involving the $i_1\text{-th}, i_2\text{-th}, \cdots, i_k$-th rows and $j_1\text{-th}, j_2\text{-th}, \cdots, j_k$-th columns that is not equal to $\pm 1$, then $\mathbb{Z}^n$ can not be generated by $\{A_{j_1}, A_{j_2}, \cdots,A_{j_k}\}$ and $\{A_{i_{k+1}},A_{i_{k+2}}, \cdots, A_{i_{n}}\}$,  since 
$$|A_{j_1}\wedge\cdots\wedge A_{j_k}\wedge A_{i_{k+1}}\wedge\cdots\wedge  A_{i_n}|>1,$$ where $\{i_{k+1}, \cdots,i_n\}$ is the complementary set of $\{i_1, \cdots, i_k\}$ in $\{1,2,\cdots,n-1,m(\Xi)+1\}$. This gives a contradiction with our prime assumption. \end{proof}
\begin{lemma}\label{lem-1}
For any given $1\leq i\leq n$ and $m(\Xi)+1\leq j,k \leq N$, we have $a_{ji}a_{ki}\geq 0$, where $a_{ji}$ is the $i$-th coordinate of $A_j$.
\end{lemma}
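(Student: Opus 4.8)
The plan is to derive a contradiction from Lemma~\ref{lem-0} by exhibiting a forbidden $2\times 2$ minor of $Y$. First I would record the two facts supplied by the earlier set-up. Since $\Lambda_n^*$ is prime, Lemma~\ref{lem:prime} forces every entry $a_{li}$ to lie in $\{0,\pm1\}$; and by the sign normalization chosen just before Lemma~\ref{lem-0}, each off-plane generator has last coordinate $1$, so that $a_{jn}=a_{kn}=1$ whenever $m(\Xi)+1\le j,k\le N$. These are exactly the two structural ingredients that the conclusion will exploit.

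Next I would dispose of the trivial case $i=n$: here $a_{ji}a_{ki}=1\cdot 1=1\ge 0$, so there is nothing to prove, and I may henceforth assume $1\le i\le n-1$, which in particular guarantees that the columns indexed by $i$ and $n$ are distinct. Suppose now, for contradiction, that $a_{ji}a_{ki}<0$. Because both entries lie in $\{0,\pm1\}$, this forces $\{a_{ji},a_{ki}\}=\{1,-1\}$; after possibly interchanging $j$ and $k$ (which leaves the product unchanged) I take $a_{ji}=1$ and $a_{ki}=-1$.

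The key step is to evaluate the $2\times 2$ minor of $Y$ cut out by rows $j,k$ and columns $i,n$, namely
\[
\det\begin{pmatrix} a_{ji} & a_{jn}\\ a_{ki} & a_{kn}\end{pmatrix}
=a_{ji}a_{kn}-a_{jn}a_{ki}=1\cdot 1-1\cdot(-1)=2.
\]
Since $2\notin\{0,\pm1\}$, this contradicts Lemma~\ref{lem-0}, which asserts that every minor of $Y$ takes values in $\{0,\pm1\}$. Hence $a_{ji}a_{ki}\ge 0$, as claimed. I do not expect any serious obstacle here: the entire content is the observation that opposite signs in a fixed column, placed against the normalized column of $1$'s coming from the last coordinate of the off-plane vectors, force a minor of absolute value $2$, which prime-ness (through Lemma~\ref{lem-0}) forbids. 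The only points that require a little care are treating the degenerate case $i=n$ separately and invoking Lemma~\ref{lem:prime} at the outset so that the entries cannot already be $\pm 2$; everything else is immediate.
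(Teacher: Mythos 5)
Your proof is correct and follows essentially the same route as the paper: both derive the contradiction from the $2\times 2$ minor of $Y$ on rows $j,k$ and columns $i,n$, whose value $\pm 2$ is forbidden (the paper cites the prime assumption directly, while you route it through Lemma~\ref{lem-0}, which encodes the same fact). Your explicit handling of the trivial case $i=n$ and the sign normalization are harmless additions of care, not a different argument.
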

\begin{proof} Assume $a_{j i}a_{k i}<0$. Then from $a_{jn}=a_{kn}=1$ we get the minor
$
\begin{vmatrix}
	a_{j i}&a_{k i}\\
	1&1
\end{vmatrix}=\pm 2,
$
which is a contradiction with our prime assumption.
\end{proof}
For a subset $I\subset \{1,2,\cdots,n\}$, we say a subset $X\subset \mathbb{Z}^n$ has a {\em partial order according to  $I$}, if for any given two vectors $\xi, \eta\in X$, their $i$-th coordinates $\xi_i,\eta_i$ and $j$-th coordinates $\xi_j,\eta_j$ satisfy 
\[
(\xi_i-\eta_i)(\xi_j-\eta_j)\geq 0,\quad i,j\in I.
\]

\begin{lemma}\label{lem-2}
If there is a vector $A_j \in Y$ such that three coordinates $\{a_{j i_{1}}, a_{j i_{2}}, a_{j i_{3}}\}$ of $A_j$ satisfy
$$a_{j i_{1}} a_{j i_{2}}=1,~~~ a_{j i_{3}}=0,$$
then  for any $1\leq r \leq N$, we have
$$a_{r i_1} a_{r i_2}\geq 0,$$
and the subset $Y_{i_3} \triangleq \{ A_k\in Y\, | \, a_{k i_3}=1\}$ has a partial order according to $\{i_1, i_2\}$, so does the subset  $\widehat{Y}_{i_3} \triangleq \{ A_k\in Y\, | \, a_{k i_3}=-1\}$.
\end{lemma}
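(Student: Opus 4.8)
The key ingredient will be Lemma~\ref{lem-0}, which guarantees that every minor of the matrix $Y$ lies in $\{0,\pm 1\}$; combined with the fact (established just above the lemma) that each entry $a_{ji}$ already lies in $\{0,\pm 1\}$, this should reduce the whole statement to extracting a few small minors and reading off sign constraints. The plan is to treat the two assertions separately, using a $2\times 2$ minor for the first and a $3\times 3$ minor for the second, the latter being the place where the hypothesis $a_{j i_3}=0$ enters decisively.

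First I would establish $a_{r i_1}a_{r i_2}\ge 0$ for every $r$. Since $a_{j i_1}a_{j i_2}=1$ with entries in $\{0,\pm1\}$, we must have $a_{j i_1}=a_{j i_2}=1$ or $a_{j i_1}=a_{j i_2}=-1$; in either case the $2\times 2$ minor of $Y$ built from rows $A_j,A_r$ and columns $i_1,i_2$ equals $\pm(a_{r i_1}-a_{r i_2})$. By Lemma~\ref{lem-0} this minor lies in $\{0,\pm1\}$, so $|a_{r i_1}-a_{r i_2}|\le 1$; were $a_{r i_1}$ and $a_{r i_2}$ of opposite sign, their difference would be $2$, which is impossible, and hence the product is nonnegative.

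For the partial-order claim I would first unwind the definition: for $A_k,A_l\in Y_{i_3}$ one must check $(a_{k i_1}-a_{l i_1})(a_{k i_2}-a_{l i_2})\ge 0$. Writing $p=a_{k i_1}-a_{l i_1}$ and $q=a_{k i_2}-a_{l i_2}$, I would form the $3\times 3$ minor of $Y$ on rows $A_j,A_k,A_l$ and columns $i_1,i_2,i_3$. Here the first row is $(1,1,0)$ because $a_{j i_1}=a_{j i_2}=1$ and $a_{j i_3}=0$, while the last entries of the other two rows are both $1$ since $A_k,A_l\in Y_{i_3}$; expanding along the top row collapses the determinant to exactly $q-p$. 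Lemma~\ref{lem-0} then forces $|p-q|\le 1$, and if $p$ and $q$ had opposite signs we would get $|p-q|\ge 2$, a contradiction, so $pq\ge 0$. The case of $\widehat{Y}_{i_3}$ is identical except that the relevant last entries are $-1$, which only changes the minor to $p-q$ and leaves the conclusion unchanged.

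I do not expect a genuine obstacle here: once Lemma~\ref{lem-0} is available, the argument is essentially bookkeeping. The one point requiring care is the cofactor computation showing that the $3\times 3$ minor reduces to $\pm(p-q)$ — it is precisely the combination $a_{j i_1}=a_{j i_2}=1$, $a_{j i_3}=0$ together with $a_{k i_3}=a_{l i_3}$ that makes the cross terms cancel and isolates the difference $p-q$. Choosing a different row or column block would not produce this clean collapse, so selecting the right $3\times 3$ block is the only step where a careless choice would stall the argument.
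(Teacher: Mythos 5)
Your proof is correct and follows essentially the same route as the paper: the identical $2\times 2$ minor on rows $A_j,A_r$ and columns $i_1,i_2$ for the sign claim, and the same $3\times 3$ minor on rows $A_j,A_k,A_l$ and columns $i_1,i_2,i_3$ (the paper writes its transpose), which collapses to $\pm\bigl[(a_{k i_1}-a_{l i_1})-(a_{k i_2}-a_{l i_2})\bigr]$ and is bounded in $\{0,\pm 1\}$ by the prime assumption (Lemma~\ref{lem-0}). The only deviation is harmless: the paper first pins $a_{k i_1}-a_{l i_1},\,a_{k i_2}-a_{l i_2}\in\{0,\pm 1\}$ via two extra $2\times 2$ minors, a step you rightly omit since opposite (nonzero) signs already force the difference to have absolute value at least $2$; just note explicitly that the case $a_{j i_1}=a_{j i_2}=-1$ merely flips the sign of the minor.
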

\begin{proof} Similarly as in the proof of Lemma~\ref{lem-1}, by considering the minor $\begin{vmatrix}
	a_{j i_1}&	a_{r i_1}\\
         a_{j i_2}&	a_{r i_2}
\end{vmatrix}$, we can derive the first conclusion.

Given two arbitrary vectors  $A_k, A_l\in Y_{i_{3}}$, 
we have $a_{k i_{3}}=a_{l i_{3}}=1$, which implies $a_{k i_{1}}a_{l i_{1}}\geq 0$ and $a_{k i_{2}}a_{l i_{2}}\geq 0$ by considering the minors $\begin{vmatrix}
	a_{k i_1}&	a_{l i_1}\\
         1&	1
\end{vmatrix}$ and $\begin{vmatrix}
	a_{k i_2}&	a_{l i_2}\\
         1&	1
\end{vmatrix}$, respectively. Therefore, $a_{k i_{1}}-a_{l i_{1}}$ and $a_{k i_{2}}-a_{l i_{2}}$ all take values in $\{0,\pm1\}$. Consider the minor
\[
\begin{vmatrix}
	a_{j i_{1}}&a_{k i_{1}}&a_{l i_{1}}\\
	a_{j i_{2}}&a_{k i_{2}}&a_{l i_{2}}\\
	0&1&1
\end{vmatrix}=\pm[(a_{k i_{1}}-a_{l i_{1}})-(a_{k i_{2}}-a_{l i_{2}})].
\]
It follows from the prime assumption that $(a_{k i_{1}}-a_{l i_{1}})(a_{k i_{2}}-a_{l i_{2}})\geq 0$. 
Similar discussion applies to $\widehat{Y}_{i_{3}}$.
\end{proof}
\begin{lemma}\label{lem-3}
If $A_1+A_2+\cdots+A_{n-1}+A_{m(\Xi)+1}$ belongs to $Y$, then all the coordinates of any $A_j\in Y$ must be either $\geq 0$ or $\leq 0$. Especially, after changing the directions of some vectors, all entries of $Y$ take values in $\{0, 1\}$.
\end{lemma}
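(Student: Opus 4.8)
The plan is to read the result off from the minor constraint of Lemma~\ref{lem-0} once the special role of the all-ones vector is recognized. First I would recall that, under the normalization fixed just before the lemma, the rows $A_1,\dots,A_{n-1},A_{m(\Xi)+1}$ are precisely the standard basis $e_1,\dots,e_n$ of $\mathbb{R}^n$; hence their sum is the all-ones vector $\mathbf 1=(1,1,\dots,1)$, and the hypothesis is exactly the statement that $\mathbf 1$ is a row of $Y$.

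The key step is then to pair $\mathbf 1$ with an arbitrary row $A_j=(a_{j1},\dots,a_{jn})$ of $Y$. For any two distinct columns $i,k$ the corresponding $2\times 2$ minor of $Y$ is
$$\begin{vmatrix} 1 & 1 \\ a_{ji} & a_{jk}\end{vmatrix}=a_{jk}-a_{ji},$$
which lies in $\{0,\pm1\}$ by Lemma~\ref{lem-0}. Since each entry already belongs to $\{0,\pm1\}$ by Lemma~\ref{lem:prime}, the resulting inequality $|a_{jk}-a_{ji}|\le 1$ rules out the possibility $\{a_{ji},a_{jk}\}=\{1,-1\}$. As $i,k$ are arbitrary, no coordinate equal to $+1$ can coexist with a coordinate equal to $-1$ inside $A_j$; thus the nonzero entries of $A_j$ share a common sign, i.e.\ $A_j\ge 0$ or $A_j\le 0$, which is the first assertion.

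For the final clause I would simply flip the sign of each $A_j$ whose nonzero entries are all $-1$, which amounts to replacing $\xi_j$ by $-\xi_j$ and is permissible because $\Xi$ is defined only up to sign; after these flips every row lies in $\{0,1\}^n$. I expect no serious obstacle here: the whole argument is the single observation that coupling $\mathbf 1$ with any row in a $2\times 2$ minor forces that row to be sign-coherent. The only point needing a moment's care is that the sign-flips respect the earlier normalization of $Y$, and this is automatic—the rows $A_j$ with $j>m(\Xi)$ have last coordinate $1$, so they are already nonnegative and need no flip, while flipping a row $A_j$ with $j\le m(\Xi)$ leaves its last coordinate equal to $0$.
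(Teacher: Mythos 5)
Your proposal is correct and matches the paper's own argument: the paper likewise sets $A_0=A_1+\cdots+A_{n-1}+A_{m(\Xi)+1}$ (the all-ones vector, since these rows form the standard basis) and observes that a row $A_j$ containing both a $+1$ and a $-1$ would produce a $2$-minor of $Y$ equal to $\pm 2$, contradicting the prime assumption (i.e.\ Lemma~\ref{lem-0}), after which sign-flips give entries in $\{0,1\}$. Your closing check that the flips respect the normalization of the last coordinate is a detail the paper leaves implicit, but it is correct and does not change the argument.
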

\begin{proof}
Let $A_0=A_1+A_2+\cdots+A_{n-1}+A_{m(\Xi)+1}$. If there is a vector $A_j\in Y$  
 which doesn't satisfy the conclusion, then one can find a $2$-minor in $A_0$ and $A_j$ taking values other than $\{0, \pm1\}$. 
This gives a contradiction with our prime assumption.
\end{proof}
The following definition will also be used in the next section.  
\begin{definition}
A vector in $\mathbb{Z}^n$ is called $k$-null, if it has exactly $n-k$ nonzero coordinates. Under fixed generator of $\Lambda_n^*$, a lattice vector in $\Lambda_n^*$ is called $k$-null if its coordinate vector is $k$-null.
\end{definition}

\section{Classification of conformally flat and $\lambda_1$-minimal 3-tori and 4-tori}\label{sec-4}
It follows from the works of Montiel-Ros \cite{Mon-Ros} and El Soufi-Ilias \cite{Sou-Ili} that for each conformal structure on compact manifold $(M^n,[g_0])$, there exists at most one metric  $g\in[g_0]$ so that $(M^n,g)$ can be minimally immersed into a sphere 
by the first eigenfunctions. Moreover, if $(M^n,g_0)$ is homogeneous, then such $\lambda_1$-minimal metric must be $g_0$ itself (up to a constant dilation). Note that the flat torus $T^n$ is homogeneous, so we only need to classify all non-congruent $\lambda_1$-minimal immersions of falt $3$-tori and $4$-tori in spheres. 

Let $x: T^n=\mathbb{R}^n/\Lambda_n\rightarrow \mathbb{S}^{2N-1}$ be a $\lambda_1$-minimal flat torus. 
Here we do not assume it is linearly full, and denote by $N$ the number of distinct shortest lattice vectors of $\Lambda^*_n$ up to $\pm1$, 
i.e., the half dimension of the first eigenspace of flat torus $T^n=\mathbb{R}^n/\Lambda_n$. Without loss of generality, we assume this shortest length is 
$1$. It is well known that $N\geq n$ (see Corollary 3.4 in \cite{doCarmo}). Let
$\Xi$ 
be the set of these $N$ shortest vectors, which are denoted by
$\xi_1, \xi_2, \cdots,\xi_N$. Then according to Remark \ref{rem:reducible}, $x$ is reducible if and only if $\Lambda^*_n$ is s-reducible defined as in Section \ref{sec-lattice}.
\subsection{Classification of conformally flat 3-tori}

\begin{theorem}\label{thm:3}
Up to congruence, there are five distinct   $\lambda_1$-minimal immersions of conformally flat $3$-tori in spheres. Two of them are reducible ones given in Example~\ref{ex:3-prod},  
others are irreducible ones given in  Example~\ref{ex:3-1} $\sim$ \ref{ex:3-3}. They are all listed in the Table~\ref{tab:my_tabel3}.
\end{theorem}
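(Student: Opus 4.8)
The plan is to combine the reduction recorded just before the statement with the lattice results of Section~\ref{sec-lattice} and the variational characterization of Section~\ref{sec-variation}. As already noted, it suffices to classify $\lambda_1$-minimal flat $3$-tori $x:\mathbb{R}^3/\Lambda_3\to\mathbb{S}^{2N-1}$ up to congruence, where $N$ is the number of shortest vectors of $\Lambda_3^*$ up to $\pm 1$, normalized to have length $1$. First I would establish homogeneity: the isometric (flat) condition \eqref{eq:flat} forces $\sum c_i^2\xi_i^t\xi_i=\frac{1}{3}Q^{-1}$ to be positive definite, so $\mathrm{rank}(\Xi)=3$; then Theorem~\ref{thm-gen} (there is no exceptional lattice when $n=3$) shows $\Lambda_3^*$ is prime, Remark~\ref{rk-pruni} gives the unimodular condition, and Proposition~\ref{prop-homo} yields that $x$ is homogeneous. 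Hence $x$ is encoded by a matrix datum $\{Y,Q,(c_1^2,\dots,c_N^2)\}$ and one may work entirely with the integer set $Y$.

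Next I would pin down the combinatorial shape of $Y$. Writing $Y$ as in \eqref{eq:canonical}, Lemma~\ref{lem:prime} forces every entry into $\{0,\pm1\}$ and Lemma~\ref{lem-0} forces every minor into $\{0,\pm1\}$. The governing invariant is $m(\Xi)$ from \eqref{eq-mxi}; since a rank-$2$ lattice carries at most three shortest vectors up to sign, $m(\Xi)\in\{2,3\}$. After arranging the first $m(\Xi)$ rows to have vanishing last coordinate and the remaining rows to have last coordinate $1$, the constant-sign constraint of Lemma~\ref{lem-1} and the partial-order constraint of Lemma~\ref{lem-2} reduce the admissible rows, up to the $SL(3,\mathbb{Z})$-action and overall sign, to a set of at most six vectors; in particular $N\le 6$. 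Splitting according to $m(\Xi)\in\{2,3\}$ together with the maximality defining $m(\Xi)$ (when $m(\Xi)=2$ no rank-$2$ sublattice may contain three of the chosen shortest vectors), and running over $N=3,4,5,6$, I would produce a short finite list of candidate sets $Y$ modulo $SL(3,\mathbb{Z})$.

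For each surviving candidate $Y$ I would then recover the metric and test minimality and the $\lambda_1$-property. By Theorem~\ref{thm-variation} the Gram matrix $Q$ is the unique critical point of $\det$ on $W_Y$ (uniqueness via the strict concavity of $\log\circ\det$ in Lemma~\ref{lem-det} and Corollary~\ref{cor-unique}); solving the linear critical-point equations yields $Q$ explicitly, after which one checks $Q\succ 0$ and $Q^{-1}/3\in C_Y$. Finally the $\lambda_1$-condition, that $1$ is genuinely the shortest length, is verified by Theorem~\ref{thm:special}, which for $n=3$ requires only finitely many integer vectors to be tested. Each candidate passing all three tests is exactly one of the five tori of Examples~\ref{ex:3-prod}--\ref{ex:3-3}: the two s-reducible ones ($N=3$, and $N=4$ with $m(\Xi)=3$) are the reducible immersions of Example~\ref{ex:3-prod} by Remark~\ref{rem:reducible}, and the s-irreducible ones ($N=4$ with $m(\Xi)=2$, $N=5$, $N=6$) are Examples~\ref{ex:3-1}--\ref{ex:3-3}.

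The main obstacle I expect is the enumeration step: controlling the $SL(3,\mathbb{Z})$-redundancy so as neither to omit nor to double-count a case, and—more delicately—eliminating combinatorially admissible sets $Y$ that nonetheless fail to produce a genuine $\lambda_1$-minimal torus, either because $W_Y=\varnothing$, because the critical $Q$ slips to the boundary with $Q^{-1}/3\notin \overset{\circ}{C_Y}$, or because the test of Theorem~\ref{thm:special} exhibits a lattice vector of length $<1$ (contradicting $N$ as the shortest-vector count). Keeping the bookkeeping of these exclusions organized, rather than any single hard computation, is where the real work lies.
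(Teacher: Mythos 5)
Your proposal is correct, and its outer layers coincide with the paper's own proof: homogeneity is obtained exactly as in the paper (flatness forces $\mathrm{rank}(\Xi)=3$, Theorem~\ref{thm-gen} gives primeness since the exceptional lattice occurs only in rank $4$, then Remark~\ref{rk-pruni} and Proposition~\ref{prop-homo} apply), and the final congruence statement rests, as in the paper, on Corollary~\ref{cor-unique} together with the linear independence of $\{A^tA \mid A\in Y\}$. Where you genuinely diverge is the combinatorial core. The paper does not stratify by $m(\Xi)$ for $n=3$: it invokes Lemma~\ref{lem:irreducible} to extract a generic $4$-tuple in the irreducible case, normalizes it to $\xi_4=\xi_1+\xi_2+\xi_3$ with $\{\xi_1,\xi_2,\xi_3\}$ a generator, applies Lemma~\ref{lem-3} to force all coordinates into $\{0,1\}$, so that any further shortest vector is one of the three $\xi_i+\xi_j$, and then rules out having all three at once by the single identity $(\xi_1+\xi_2)\wedge(\xi_2+\xi_3)\wedge(\xi_1+\xi_3)=2\,\xi_1\wedge\xi_2\wedge\xi_3$; this yields $N\leq 6$ and the three irreducible normal forms in a few lines. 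Your stratification by $m(\Xi)\in\{2,3\}$ with Lemmas~\ref{lem-1} and~\ref{lem-2} is precisely the machinery the paper reserves for the rank-$4$ classification; it does work for $n=3$ (in the $m(\Xi)=3$ stratum the row $(1,1,0)$ supplies the hypothesis of Lemma~\ref{lem-2}, and the chain/partial-order argument caps the extra rows at three, while $m(\Xi)=2$ caps $N$ at $4$), but it costs you exactly the $SL(3,\mathbb{Z})$-bookkeeping you flag: for instance the $m(\Xi)=3$, $N=5$ candidate $\{e_1,e_2,e_1+e_2,e_3,e_1+e_3\}$ is not a new case but is Example~\ref{ex:3-2} in disguise, since it contains the generic $4$-tuple $\{e_2,\,e_1+e_2,\,e_3,\,e_1+e_3\}$ — an identification the paper's normalization produces automatically. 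Finally, your third pass (recomputing $Q$ via Theorem~\ref{thm-variation}, checking $Q\succ 0$, $Q^{-1}/3\in \overset{\circ}{C_Y}$, and the criterion of Theorem~\ref{thm:special}) is logically sound but redundant: once $Y$ is matched to one of the five listed sets, Corollary~\ref{cor-unique} plus the verifications already recorded in Section~\ref{sec-examples} close the argument, and your worry about admissible $Y$ with $W_Y=\varnothing$ or a boundary critical point never materializes for $n=3$ because the enumeration, run either way, outputs only the five realized sets. In short: same skeleton, a heavier but viable enumeration engine, and one superfluous verification layer; the paper's route buys brevity, yours buys uniformity with the $n=4$ case.
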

\begin{proof}
For $\lambda_1$-minimal flat $3$-torus, it follows from Theorem~\ref{thm-gen} that the dual lattice $\Lambda_3^*$ is always prime. Hence by Remark~\ref{rk-pruni} and Proposition~\ref{prop-homo},  the $\lambda_1$-minimal immersion $x$ is homogeneous. Then it follows from Corollary~\ref{cor-unique} that  
we only need to prove the corresponding integer set $Y$ of $x$ is exactly that given in Example~\ref{ex:3-1} $\sim$ \ref{ex:3-3}, for which 
$\{A^tA\,|\, A\in Y\}$ is of rank $N$ can be easily checked. 

Suppose $x$ is irreducible. It follows from Lemma~\ref{lem:irreducible} that there exists a generic $4$-tuple in $\Xi$, which is assumed to be $\{\xi_1, \xi_2, \xi_3, \xi_4\}$. Moreover, we choose $\{\xi_1, \xi_2, \xi_3\}$ to be a generator of $\Lambda^*_3$ so that $\xi_4=\xi_1+\xi_2+\xi_3$. Then Lemma~\ref{lem-3} implies that all coordinates of lattice vectors in $\Xi$ can be assumed to take values in $\{0,1\}$. Therefore $\Xi\setminus\{\xi_1, \xi_2, \xi_3, \xi_4\}$ is constituted by $1$-null lattice vectors, the number of which is no more than $3$. On the other hand, from
$$(\xi_1+\xi_2)\wedge(\xi_2+\xi_3)\wedge(\xi_1+\xi_3)=2\xi_1\wedge\xi_2\wedge\xi_3,$$
we know this number can not be $3$, which implies $N\leq 6$. If $N=4$, we obtain Example~\ref{ex:3-1}. If $N=5$, we obtain Example \ref{ex:3-2} by making a permutation to $\{\xi_1, \xi_2, \xi_3\}$ such that the only $1$-null lattice vector in $\Xi$ is  $\xi_1+\xi_2$. Similarly, we arrive at Example \ref{ex:3-3} for $N=6$.
\end{proof}

\subsection{Classification of conformally flat 4-tori}
For $\lambda_1$-minimal flat $4$-torus, it follows from Theorem~\ref{thm-gen} that the dual lattice $\Lambda_4^*$ 
is prime with only one exception, which is generated by 
	the row vectors of \eqref{eq:exception}.
	We will firstly discuss the $\lambda_1$-minimal immersion of such exceptional torus.
\begin{lemma}
The $\lambda_1$-minimal isometric immersion of such exceptional torus is homogeneous.
\end{lemma}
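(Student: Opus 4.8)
The plan is to verify the hypothesis of Lemma~\ref{lem-homo} for this particular lattice, which then yields homogeneity immediately. Proposition~\ref{prop-homo} is unavailable here: the vectors $e_1,e_2,e_3,e_4$ span a parallelepiped of volume $1$ while $e_1,e_2,e_3,\tfrac12(e_1+e_2+e_3+e_4)$ span volume $\tfrac12$, so the unimodular condition fails — this is exactly what singles this torus out. First I would list the unit vectors of the lattice $\Lambda_4^*$ generated by the rows of \eqref{eq:exception}: they are the $8$ vectors $\pm e_i$ and the $16$ vectors $\tfrac12(\pm1,\pm1,\pm1,\pm1)$, so $N=12$ (up to scale this is $D_4^*$, consistent with the target $\mathbb{S}^{23}$ of Example~\ref{ex-4tori}). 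Since homogeneity does not depend on the ambient sphere, I may assume $x$ is linearly full after replacing $\mathbb{S}^{2N-1}$ by the linear span of the components of $x$.

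The device that makes the check uniform is a normal form for symmetric products along a fixed $\eta$. If $\xi_r,\xi_s$ are unit vectors with $\xi_r+\xi_s=\eta$, then writing $\xi_r=\tfrac12\eta+\zeta$, $\xi_s=\tfrac12\eta-\zeta$ with $\zeta\perp\eta$ and $|\zeta|^2=1-\tfrac14|\eta|^2$ gives
\[
\xi_r\odot\xi_s=\tfrac14\,\eta\odot\eta-\zeta\odot\zeta .
\]
Thus the $\eta$-set produces the matrices $\tfrac14\eta\odot\eta-\zeta_j\odot\zeta_j$, $1\le j\le q$. In the orthogonal decomposition $\mathrm{Sym}^2(\mathbb{R}^4)=\mathrm{Sym}^2\langle\eta\rangle\oplus\bigl(\langle\eta\rangle\odot\eta^\perp\bigr)\oplus\mathrm{Sym}^2(\eta^\perp)$ the term $\eta\odot\eta$ lies in the first summand and every $\zeta_j\odot\zeta_j$ in the last, so a relation $\sum_j c_j\bigl(\tfrac14\eta\odot\eta-\zeta_j\odot\zeta_j\bigr)=0$ is equivalent to the two conditions $\sum_j c_j=0$ and $\sum_j c_j\,\zeta_j\odot\zeta_j=0$. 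Consequently the $\eta$-set is linearly independent precisely when the homogenized vectors $(\zeta_j\odot\zeta_j,\,1)\in\mathrm{Sym}^2(\eta^\perp)\oplus\mathbb{R}$ are linearly independent.

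Next I would reduce to finitely many $\eta$ using symmetry. The signed permutations $B_4$ preserve $\Lambda_4^*$, permute its unit vectors, and act on symmetric products by $(g\xi_r)\odot(g\xi_s)=g(\xi_r\odot\xi_s)g^t$, so they preserve the independence property. For distinct minimal vectors (mod sign) one has $\langle\xi_r,\xi_s\rangle\in\{0,\pm\tfrac12\}$, hence $|\xi_r\pm\xi_s|^2\in\{1,2,3\}$; sorting the $B_4$-orbits by the multiset of absolute coordinates leaves five representatives of $\eta$: the minimal ones $e_1$ and $\tfrac12(1,1,1,1)$ with $|\eta|^2=1$, the vector $(1,1,0,0)$ with $|\eta|^2=2$, and $(1,1,1,0)$, $\tfrac12(3,1,1,1)$ with $|\eta|^2=3$. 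The last two have $q=1$, so independence is vacuous; for $(1,1,0,0)$ the three $\zeta_j=\tfrac12(1,-1,0,0),\tfrac12(0,0,1,1),\tfrac12(0,0,1,-1)$ are pairwise orthogonal, so the $\zeta_j\odot\zeta_j$ have orthogonal ranges and are independent together with the appended $1$. The decisive case is the two minimal $\eta$, where $q=4$: there the $\zeta_j\odot\zeta_j$ share a constant diagonal block and differ only through an off-diagonal block recorded by the four vectors $(1,1,1),(1,-1,-1),(-1,1,-1),(-1,-1,1)$, so the homogenized independence collapses to the nonvanishing of a $4\times4$ Hadamard determinant (equivalently, for $\eta=\tfrac12(1,1,1,1)$ one reads it off the four distinct diagonal entries). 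Feeding all five verifications into Lemma~\ref{lem-homo} gives that $x$ is homogeneous.

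The only real obstacle is the $q=4$ case: for the other $\eta$ the $\eta$-set is either a singleton or manifestly independent, whereas for a minimal $\eta$ four symmetric products must be shown independent. The $\zeta$-normal form together with the $B_4$-reduction confines this to a single explicit $4\times4$ determinant, so once those two ingredients are in place there is no remaining difficulty; the rest is bookkeeping of the twenty-four minimal vectors.
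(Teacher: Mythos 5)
Your proposal is correct, and its skeleton coincides with the paper's: both arguments verify the hypothesis of Lemma~\ref{lem-homo} for this $D_4^*$-type lattice by a symmetry-reduced case check over $|\eta|^2\in\{1,2,3\}$, with the $q=4$ case of a minimal $\eta$ as the only substantial verification (the paper's proof actually cites ``Lemma~\ref{prop-homo}'', evidently a slip for Lemma~\ref{lem-homo}, and your opening remark that the unimodular condition fails here — parallelepiped volumes $1$ versus $\tfrac12$ — makes explicit why Proposition~\ref{prop-homo} is genuinely unavailable, which the paper leaves implicit). The devices differ in two places. First, the paper organizes the twelve shortest vectors into the three blocks $I_4$, $S$, $S^t$ and uses the order-$3$ orthogonal matrix $S$ to normalize $\eta$, which in particular merges the two norm-$1$ types into the single representative $\eta=\xi_5$; you reduce instead by the signed-permutation group $B_4$, which cannot mix $\pm e_i$ with the half-vectors and so leaves five representatives rather than three cases — slightly more bookkeeping, but each orbit is then completely transparent. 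Second, your normal form $\xi_r\odot\xi_s=\tfrac14\,\eta\odot\eta-\zeta\odot\zeta$ together with the orthogonal decomposition of $\mathrm{Sym}^2(\mathbb{R}^4)$ along $\eta$ has no counterpart in the paper: it converts every independence check into homogenized independence of the $\zeta_j\odot\zeta_j$, so the decisive $q=4$ case becomes one visibly nonsingular Hadamard-type $4\times4$ determinant (or, for $\eta=\tfrac12(1,1,1,1)$, the four distinct diagonal entries), whereas the paper simply asserts the corresponding independence ``by direct computation''. In short, the paper's $S$-symmetry buys brevity; your normal form buys a mechanical, checkable criterion that would apply verbatim to any $\eta$-set, and all of your case enumerations (the five $B_4$-representatives, $q=1$ for the norm-$3$ types, $q=3$ with pairwise orthogonal $\zeta_j$ for $(1,1,0,0)$, $q=4$ for minimal $\eta$) match the lattice.
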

\begin{proof}
It follows from \eqref{eq:exception} that the shortest vectors of $\Lambda_4^*$ are composed of the following column vectors $\xi_i$ and $-\xi_i$,
\begin{equation}\label{eq:nonprime}
\left (
\begin{array}{rrrrrrrrrrrr}
     1&0&0&0&{1}/{2}&{1}/{2}&{1}/{2}&{-1}/{2}&1/2&1/2&1/2&1/2  \\
     0&1&0&0&{1}/{2}&{1}/{2}&{-1}/{2}&{1}/{2}&1/2&1/2&-1/2&-1/2\\
     0&0&1&0&{1}/{2}&{-1}/{2}&{1}/{2}&{1}/{2}&1/2&-1/2&1/2&-1/2\\
     0&0&0&1&{1}/{2}&{-1}/{2}&{-1}/{2}&{-1}/{2}&-1/2&1/2&1/2&-1/2
\end{array}
\right ),
\end{equation}
which can be divided into three blocks: $I_4$, $S$, $S^t$. It is obvious that $S$ is an  orthogonal matrix and $S^3=I_4$.

It suffices to show that the symmetric products from any $\eta$-set are linearly independent, where the conclusion arises according to Lemma  \ref{prop-homo}.

Suppose $\eta=\xi_{r_1}\pm \xi_{s_1}$.  
Either $\{\xi_{r_1},\xi_{s_1}\}$ comes from the same block which can be assumed  $I_4$  and thus $|\eta|^2=2$, or from different blocks which can be assumed  $I_4$ and $S$ so that $|\eta|^2=1$ or $3$. Here we have used the symmetry induced by $S$.

When $|\eta|^2=3$, we may assume the first coordinate of $\eta$ being $3/2$. Then it is easy to verify that there is no other possibilities for $\{\pm\xi_{r_i},\pm \xi_{s_j}\}$ (note that these vectors have to be distinct). Clearly, $\xi_{r_1}\odot\xi_{s_1}$ is linearly independent.

When $|\eta|^2=2$, the other pairs $\{\xi_{r_i},\xi_{s_i}\}$ must also come from a same block, for $\xi_{r_i}$ and $\xi_{s_i}$ have to be orthogonal. We may assume  $\eta=(1,1,0,0)$ such that all pairs $\{\xi_{r_i},\xi_{s_i}\}$ are given as follows,
\[
\eta=\xi_1+\xi_2=\xi_5+\xi_6=\xi_9+\xi_{10}.
\]
Then by direct computation, we obtain that $\xi_1\odot\xi_2$, $\xi_5\odot\xi_6$ and $\xi_9\odot\xi_{10}$ are linearly independent.

When $|\eta|^2=1$, $\eta$ is one of the shortest vectors. We may assume  $\eta=\xi_5$ such that all pairs $\{\xi_{r_i},\xi_{s_i}\}$ are given as follows,
\[
\eta=\xi_1-\xi_{12}=\xi_2+\xi_{11}=\xi_3+\xi_{10}=\xi_4+\xi_9.
\]
Then it is straightforward to verify that $\xi_1\odot\xi_{12}$, $\xi_2\odot\xi_{11}$, $\xi_3\odot\xi_{10}$ and $\xi_4\odot\xi_{9}$ are linearly independent.
\end{proof}
\begin{proposition}\label{prop:except}
For such exceptional torus, 
there is altogether a $2$-parameter family of $\lambda_1$-minimal isometric immersions in $\mathbb{S}^{23}$ up to congruence, given 
as follows (see also Example~\ref{ex-4tori}):
\beq\label{eq-excep}
\begin{aligned}
\!\!\Big(a_1e^{i\pi(u_1+u_2+u_3+u_4)},a_1e^{i\pi(u_1+u_2-u_3-u_4)},a_1e^{i\pi(u_1-u_2+u_3-u_4)},a_1&e^{i\pi(-u_1+u_2+u_3-u_4)},\\
a_2e^{i\pi(u_1+u_2+u_3-u_4)},a_2e^{i\pi(u_1+u_2-u_3+u_4)},a_2e&^{i\pi(u_1-u_2+u_3+u_4)}, a_2e^{i\pi(u_1-u_2-u_3-u_4)},\\ &~a_3e^{2i\pi u_1},a_3e^{2i\pi u_2},a_3e^{2i\pi u_3},a_3e^{2i\pi u_4}\Big),
\end{aligned}
\eeq
where $0\leq a_1\leq a_2\leq a_3$ and $a_1^2+a_2^2+a_3^2=\frac{1}{4}$.
\end{proposition}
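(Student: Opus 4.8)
The plan is to combine the homogeneity just established with the flatness equation of Section~\ref{sec-variation}, reducing everything to one linear-algebra computation, and then to handle congruence through the symmetries of the lattice $\Lambda_4^*=\mathbb{Z}^4+\mathbb{Z}\,\tfrac12(1,1,1,1)$. By the preceding lemma the immersion is homogeneous, so by the discussion around \eqref{eq-x}--\eqref{eq-flat} it can be written as $x=(c_1e^{i\q_1},\dots,c_{12}e^{i\q_{12}})$ with $\q_j=2\pi\langle\xi_j,u\rangle$, where $\xi_1,\dots,\xi_{12}$ are the twelve shortest vectors listed as the columns of \eqref{eq:nonprime}. Setting $t_j:=c_j^2\ge 0$, the only remaining constraint is the flatness condition \eqref{eq-flat}, which here reads
\beq\label{eq-prop-flat}
\sum_{j=1}^{12} t_j\,\xi_j\xi_j^t=\tfrac14 I_4 .
\eeq
Taking the trace of \eqref{eq-prop-flat} and using $|\xi_j|=1$ forces $\sum_j t_j=1$ automatically, so I only need the nonnegative solutions of \eqref{eq-prop-flat}.

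The main step is to determine the full solution set of \eqref{eq-prop-flat}. Consider the linear map $T:\mathbb{R}^{12}\to S(4)$, $t\mapsto\sum_j t_j\xi_j\xi_j^t$, whose target has dimension $10$; I claim $T$ is onto. The four matrices $\xi_j\xi_j^t$ from the $I_4$-block are the rank-one diagonal matrices $e_ie_i^t$ and already span all diagonal matrices, so it suffices that the off-diagonal parts of the eight matrices $\xi_j\xi_j^t$ coming from the $S$- and $S^t$-blocks span the six-dimensional off-diagonal subspace. Attaching to each half-integer vector $\xi_j=\tfrac12(\epsilon_1,\epsilon_2,\epsilon_3,\epsilon_4)$ the vector $(\epsilon_i\epsilon_k)_{1\le i<k\le 4}\in\mathbb{R}^6$ of its off-diagonal products, a short row reduction shows these eight vectors span $\mathbb{R}^6$: they satisfy exactly the two relations $\sum_{S}=\sum_{S^t}=0$ coming from $SS^t=S^tS=I_4$, and a suitable selection of six of them is independent. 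Hence $T$ is surjective, $\ker T$ is $2$-dimensional, and the solution set of \eqref{eq-prop-flat} is a $2$-dimensional affine subspace.

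Next I exhibit an explicit $2$-parameter family of solutions and conclude it is everything. Putting $t_j=a_3^2$ on the $I_4$-block, $a_1^2$ on the $S$-block and $a_2^2$ on the $S^t$-block, the orthogonality $SS^t=S^tS=I_4$ gives $\sum_j t_j\xi_j\xi_j^t=(a_1^2+a_2^2+a_3^2)I_4$, so these are solutions precisely when $a_1^2+a_2^2+a_3^2=\tfrac14$. This block-constant set is a $2$-dimensional affine subset of the $2$-dimensional solution space, hence coincides with it; thus \emph{every} solution is block-constant, and nonnegativity is exactly $a_i^2\ge 0$. Reading the weights off in the order of \eqref{eq:nonprime} reproduces the immersion \eqref{eq-excep}.

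Finally, for the congruence normalization I would use isometries of $\Lambda_4^*$ permuting the three blocks: left multiplication by the orthogonal matrix $S$ (with $S^3=I_4$) cyclically sends $I_4\!\to\!S\!\to\!S^t$, while negating a single coordinate preserves the lattice, fixes the integer block $\pm e_i$ and interchanges the even- and odd-parity half-integer blocks $S,S^t$; together these generate the full symmetric group on the blocks, allowing one to arrange $0\le a_1\le a_2\le a_3$. Conversely, every congruence is an ambient orthogonal map composed with a lattice automorphism, and such an automorphism can only permute the three blocks among themselves, so the unordered triple $\{a_1^2,a_2^2,a_3^2\}$ is a congruence invariant and distinct ordered triples give non-congruent immersions. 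I expect the surjectivity computation to be the real content; the point most in need of care is the last claim, namely that every automorphism of $\Lambda_4^*$ respects the partition of the twelve shortest vectors into the three blocks (this is the triality symmetry of $D_4^*$), since it underlies both the ordering reduction and the non-congruence.
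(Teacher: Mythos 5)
Your proposal is correct and follows essentially the same route as the paper's own proof: homogeneity reduces the problem to the flatness equation $\sum_j c_j^2\,\xi_j^t\xi_j=\tfrac14 I_4$, whose solution set is $2$-dimensional because the span of $\{\xi_j^t\xi_j\}$ has rank $10$, and congruence is handled by lattice automorphisms permuting the three orthonormal blocks $I_4$, $S$, $S^t$, exactly as in the paper. The only difference is one of detail: where the paper asserts the rank-$10$ fact "can be verified directly" and that length- and angle-preservation forces the block permutation, you supply the explicit diagonal/off-diagonal spanning argument (with the two relations $\sum_S=\sum_{S^t}=0$), the affine-dimension count showing block-constant weights exhaust all solutions, and explicit generators (multiplication by $S$ and a coordinate negation) realizing the full symmetric group on the blocks — all sound elaborations of the same proof.
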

\begin{proof}
From \eqref{eq:nonprime} one can see that $\Lambda_4^*$ can be generated by $-\xi_1,-\xi_2,-\xi_3,\xi_5$ whose
 Gram matrix
is
\beq\label{eq-excep1}
\left(
\begin{array}{cccc}
 1 & 0 & 0 & \frac{1}{2} \\
 0 & 1 & 0 & \frac{1}{2} \\
 0 & 0 & 1 & \frac{1}{2} \\
 \frac{1}{2} & \frac{1}{2} & \frac{1}{2} & 1 \\
\end{array}
\right).
\eeq
By a straightforward computation, we can derive that the matrix $Y$ characterizing all shortest lattice vectors up to $\pm1$ 
is given by
$$
Y=\left(
\begin{array}{cccccccccccc}
1&1&0&0& 1&0&0& 1&1&0&0&1\\
1&0&1&0& 0&1&0& 1&0&1&0&1\\
0&1&1&0& 0&0&1& 1&0&0&1&1\\
1&1&1&1& 1&1&1& 1&0&0&0&2\\
\end{array}
\right)^t.
$$
We point out that the order $A_i$ in $Y$ does not coincide with that of $\xi$ in \eqref{eq:nonprime}.
Moreover, for any $c_1^2+c_2^2\leq1/4$,
\beq\label{eq-ci}
\Big(c_2^2,\;c_2^2,\;c_2^2,\;c_2^2,\;\frac{1}{4}-c_1^2-c_2^2,\;\frac{1}{4}-c_1^2-c_2^2,\;\frac{1}{4}-c_1^2-c_2^2,\;\frac{1}{4}-c_1^2-c_2^2,\; c_1^2\;c_1^2,\;c_1^2,\;c_1^2\Big)
\eeq
defines a $\lambda_1$-minimal isometric immersion. In fact, these enumerate all possibilities of the $\lambda_1$-minimal isometric immersion,  since the rank of $\{A^tA\,|\, A\in Y\}$ is $10$, which can be verified directly. Define $ a_1=\sqrt{c_2^2},\;  a_2=\sqrt{\frac{1}{4}-c_1^2-c_2^2},\,a_3=\sqrt{c_1^2}$, then  these immersions can be written down explicitly as \eqref{eq-excep}.

Next, we discuss the congruence of these immersions. Note that an ambient congruence induces an isometry on the flat torus $T^n=\mathbb{R}^n/\Lambda_n$. It is well known that there are two kinds of isometries on $T^n$. One is produced by the translations on $\mathbb{R}^n$. It is obvious that the ambient congruence corresponding to such isometry can not transform one immersion of the form \eqref{eq-excep} to another. The other is induced from the orthogonal transformations on $\mathbb{R}^n$ which  preserve the lattice $\Lambda_n$, and then $\Lambda_n^*$. Since such orthogonal transformations preserve the lengths and angles of lattice vectors, they induce perturbations between the sets $\{\pm I_4\}$, $\{\pm S\}$ and $\{\pm S^2\}$, which further induce perturbations on $\{a_1, a_2, a_3\}$. 
\end{proof}
\begin{remark}\label{rk-1para}
The immersion given in \eqref{eq:nonprime} can be seen as a twist product:
\[x(u)\triangleq\Big(a_1f(uS),\;a_2f(uS^2),\;a_3f(u)\Big)\in \mathbb{S}^7(2a_1)\times\mathbb{S}^7(2a_2)\times \mathbb{S}^7(2a_3)\subset\mathbb{S}^{23},\]
where $u=(u_1,u_2,u_3,u_4)$, $f(u)=
(e^{2i\pi u_1},e^{2i\pi u_2},e^{2i\pi u_3},e^{2i\pi u_4})$, and $S$ is the following orthogonal matrix of order $3$:  
$$\left (
\begin{array}{rrrr}
     {1}/{2}&{1}/{2}&{1}/{2}&{-1}/{2} \\
     {1}/{2}&{1}/{2}&{-1}/{2}&{1}/{2}\\
     {1}/{2}&{-1}/{2}&{1}/{2}&{1}/{2}\\
     {1}/{2}&{-1}/{2}&{-1}/{2}&{-1}/{2}
\end{array}
\right ).$$  
The flat torus involved in $x(u)$ is 
$$\mathbb{R}^4/\mathrm{Span}_{\mathbb{Z}}\{e_1-e_4, e_2-e_4, e_3-e_4, 2e_4\},$$
with the volume 
$2\pi^4$. Note that when $a_3=\frac{1}{2}$, $x(u)$ reduces to the double covering of the Clifford 4-torus with the underlying flat torus 
$$
\mathbb{R}^4/\mathrm{Span}_{\mathbb{Z}}\{e_1, e_2, e_3, e_4\}.
$$
\end{remark}

\begin{theorem}\label{thm-classify4}
Up to congruence, Example~\ref{ex:4-prod} $\sim$ Example~\ref{ex:10},  and Proposition~\ref{prop:except} exhaust all $\lambda_1$-minimal immersions of conformally flat $4$-tori in spheres. They are all listed in the Table~\ref{tab:my_tabel}.
\end{theorem}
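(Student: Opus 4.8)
The plan is to follow the same strategy as in the proof of Theorem~\ref{thm:3}, reducing the classification to a combinatorial determination of the integer set $Y$, but now organized according to the invariant $m(\Xi)$. First, by the result of El Soufi--Ilias we may restrict to flat $4$-tori, and by Theorem~\ref{thm-gen} the dual lattice $\Lambda_4^*$ is either prime or the exceptional lattice generated by the rows of \eqref{eq:exception}. The exceptional case is precisely Proposition~\ref{prop:except}, which yields the $2$-parameter family in $\mathbb{S}^{23}$. So it remains to treat the prime case, where Remark~\ref{rk-pruni} and Proposition~\ref{prop-homo} force the immersion to be homogeneous, and Corollary~\ref{cor-unique} reduces the classification to finding all admissible $Y$ up to the $SL(4,\mathbb{Z})$-action, the associated $Q$ being then uniquely determined.

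Next I would dispose of the reducible case: by Remark~\ref{rem:reducible} the torus is reducible if and only if $\Lambda_4^*$ is s-reducible, in which case $Y$ is block diagonal and the immersion splits as a direct product \eqref{eq-prod} of lower-dimensional $\lambda_1$-minimal flat tori. Using the already complete classifications in dimensions $1$, $2$ and $3$ (Example~\ref{ex:3-prod}~$\sim$~\ref{ex:3-3}), every block-diagonal $Y$ reassembles into exactly one of the six products listed in Example~\ref{ex:4-prod}.

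The heart of the proof is the s-irreducible (hence irreducible) case. Here Lemma~\ref{lem:irreducible} guarantees a generic $4$-tuple in $\Xi$, and I would set up the canonical matrix form \eqref{eq:canonical} relative to a maximal rank-$3$ intersection $\Xi'$ attaining $m(\Xi)$. Since every rank-$3$ prime lattice is classified by Theorem~\ref{thm:3}, the rank-$3$ sublattice carrying $\Xi'$ forces $\Xi'$ into one of a short list of configurations, indexed by $m(\Xi)\in\{3,4,5,6\}$. For each such configuration I would enumerate the vectors of $\Xi$ whose last coordinate is $\pm1$, using Lemmas~\ref{lem-0}--\ref{lem-3} (all minors of $Y$ lie in $\{0,\pm1\}$; sign coherence of the coordinates; the partial-order constraints) to prune the candidates to a finite explicit list. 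Each surviving candidate $Y$ must then be tested: one solves for the unique critical point $Q$ of $\det$ on $W_Y$ (Theorem~\ref{thm-variation}), checks $Q^{-1}/n\in\overset{\circ}{C_Y}$, and finally verifies via the Minkowski-reduction criterion of Theorem~\ref{thm:special} that $Q$ lies on $\partial\Omega$, i.e.\ that no integer vector is strictly shorter---otherwise the immersion fails to be $\lambda_1$. The ten surviving configurations are precisely Examples~\ref{ex:5-1}~$\sim$~\ref{ex:10}.

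The main obstacle is this last enumeration step: the combinatorics in dimension $4$ are considerably richer than in dimension $3$, and the primality constraints alone leave many candidate sets $Y$ that must each be examined individually. The delicate points are (i) ruling out candidates for which $W_Y=\varnothing$ or the critical $Q$ fails to be positive definite, or fails to land in the interior of $C_Y$, and (ii) excluding candidates that admit a valid $Q$ but for which $Q\notin\partial\Omega$, so that shorter lattice vectors appear and the purported $\lambda_1$-condition is violated; this second check is exactly where Theorem~\ref{thm:special} is indispensable. Care is also needed to confirm that the chosen intersection $\Xi'$ genuinely has rank $3$, so that Theorem~\ref{thm:3} applies, and that the linear independence hypothesis of Corollary~\ref{cor-unique} holds, so that each admissible $Y$ corresponds to a single congruence class.
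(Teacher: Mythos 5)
Your overall architecture agrees with the paper's: reduce to flat $4$-tori via El Soufi--Ilias, split off the exceptional lattice of Theorem~\ref{thm-gen} (Proposition~\ref{prop:except}), use primality plus Remark~\ref{rk-pruni} and Proposition~\ref{prop-homo} to get homogeneity, invoke Corollary~\ref{cor-unique} to reduce everything to the determination of $Y$ up to $SL(4,\mathbb{Z})$, and dispose of the reducible case by Remark~\ref{rem:reducible} and Example~\ref{ex:4-prod}. For the irreducible case you organize the enumeration by the type of a maximal intersection $\Xi'$ and the invariant $m(\Xi)$, whereas the published proof first dichotomizes on whether $\Xi$ contains a \emph{generic $5$-tuple}: Lemma~\ref{lem-nge} handles the case without one (producing exactly the lattice of Example~\ref{ex:7-1}), and Lemma~\ref{lem-4} with Remark~\ref{rem:N>7} is the nontrivial step ensuring that $\Xi'$ can be chosen to contain three vectors of a generic $5$-tuple, after which Lemma~\ref{lem-3} normalizes all coordinates to $\{0,1\}$ and the casework runs over $N\leq 7$ and $N\geq 8$ (with $m(\Xi)\in\{5,6\}$). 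Your route is viable, but note that without something playing the role of Lemma~\ref{lem-4} you cannot simultaneously normalize the generator, the signs, and the position of $\Xi'$; this is where the paper invests its real combinatorial work, and your proposal leaves it implicit.

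There is, however, one concrete step in your testing procedure that would fail: you impose $Q^{-1}/n\in\overset{\circ}{C_Y}$ as a necessary condition and propose to rule out candidates for which the critical $Q$ ``fails to land in the interior of $C_Y$.'' Applied literally, this discards Example~\ref{ex:7-1}, contradicting your own claimed conclusion. For that lattice $N=7$, the matrices $\{A^tA \mid A\in Y\}$ are linearly independent, and the unique representation of $Q^{-1}/4$ has $c_3^2=0$, so the point lies on a proper face of $C_Y$, not in its interior; the $\lambda_1$-minimal immersion exists but simply does not use the shortest vector $\xi_3$, which is why it is linearly full only in $\mathbb{S}^{11}$ rather than $\mathbb{S}^{13}$. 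The correct criterion is that of Theorem~\ref{thm-variation} (membership in the convex hull $C_Y$, boundary allowed) or, equivalently, the face-passing procedure described in the remark following Theorem~\ref{thm-vari}: when the maximizer lies on $\overset{\circ}{C}_{Y'}$ for a face $C_{Y'}\subset C_Y$, the data $\{Q^{-1}/n, Y'\}$ still yields a $\lambda_1$-minimal torus, since $Y$ remains the full set of shortest vectors of the lattice determined by $Q$. The interior condition of Theorem~\ref{thm-vari} is tied to the linearly-full hypothesis and cannot be used as an exclusion test in the classification. Once this is repaired (and the $\lambda_1$-verification via Theorem~\ref{thm:special} is kept, as you correctly have it, for the existence direction), your plan matches the paper's proof in substance.
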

\begin{proof}
The exceptional case has been discussed in Proposition~\ref{prop:except}. Next, we assume $\Lambda_4^*$ is prime. It follows from Remark~\ref{rk-pruni} and Proposition~\ref{prop-homo} that the $\lambda_1$-minimal immersion $x$ is homogeneous.  
Combining this with Corollary~\ref{cor-unique},   
we only need to prove the corresponding integer set $Y$ of $x$ is exactly that given in Example~\ref{ex:4-prod} $\sim$ Example~\ref{ex:10}, for which 
$\{A^tA\,|\, A\in Y\}$ is of rank $N$ can be easily checked. 

Note that all reducible ones have be given in Example~\ref{ex:4-prod}.
We will complete the classification of  irreducible ones 
through the following lemmas and propositions involving prime lattices of rank $4$.
\end{proof}
In the rest discussion, when some generator of $\Lambda_4^*$ is chosen, we will identify the lattice vectors with their coordinates with respect to the given generator such that all vectors belong to $\mathbb{Z}^4$.
\begin{lemma}\label{lem-nge}
Suppose $\Lambda_4^*$ is an s-irreducible prime lattice of rank $4$, if there is no generic $5$-tuple in $\Xi$, then $N\leq7$ and $Y$ takes the form as given in Example~\ref{ex:7-1}.
\end{lemma}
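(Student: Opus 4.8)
The plan is to reduce the statement to a finite combinatorial problem about the integer matrix $Y$ and then to exploit the absence of a generic $5$-tuple as the main pruning device. First I would record that, since $\Lambda_4^*$ is s-irreducible of rank $4$, Lemma~\ref{lem:irreducible} produces a generic $k$-tuple in $\Xi$ with $k\geq 4$; as $\mathbb{R}^4$ has dimension $4$ a generic $k$-tuple forces $k\leq 5$, and the hypothesis rules out $k=5$, so there is a generic $4$-tuple but none larger. Spanning a $3$-dimensional subspace $\Pi$ by such a tuple, I would choose a generator of $\Lambda_4^*$ extending a generator $\{\xi_1,\xi_2,\xi_3\}$ of the (automatically prime, since intersecting a prime lattice with the span of some of its shortest vectors stays prime) rank-$3$ sublattice $\Lambda_4^*\cap\Pi$, with the fourth generator having $x_4$-coordinate $1$. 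This puts $Y$ into the normal form of Section~\ref{sec-lattice}: entries and minors lie in $\{0,\pm1\}$ by Lemma~\ref{lem-0}, the in-plane vectors have $x_4=0$, and the remaining (``top'') vectors have $x_4=1$. By Theorem~\ref{thm:3} the in-plane part $\Xi\cap\Pi$, containing a generic $4$-tuple, must be one of the three irreducible rank-$3$ patterns of Examples~\ref{ex:3-1},~\ref{ex:3-2},~\ref{ex:3-3}; after an $SL$-change I may take $\xi_4=\xi_1+\xi_2+\xi_3\in\Xi$, and Lemmas~\ref{lem-1}, \ref{lem-2} and~\ref{lem-3} then normalise all entries of $Y$ into $\{0,1\}$.

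The engine of the proof is a linear-algebraic reformulation of ``no generic $5$-tuple''. I would show that for any two top vectors $v,w$ and any triple $\{a,b,c\}$ of independent shortest vectors in $\Pi$, the difference $v-w\in\Pi$ must have a zero coordinate in the basis $\{a,b,c\}$; otherwise $\{a,b,c,v,w\}$ is a generic $5$-tuple, for all five lie in $\Xi$, they span $\mathbb{R}^4$ because $v\notin\Pi$, and every $4$-subset is independent precisely because $v-w$ has all coordinates nonzero. Concretely, with the in-plane basis $\{e_1,e_2,e_3\}$ this says any two top vectors agree in at least one of the first three coordinates; running the same test through the other in-plane generating triples available in $\Xi\cap\Pi$ yields several simultaneous coordinate constraints on the $\{0,1\}$-vectors $v,w$. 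On the other side, s-irreducibility forbids the splitting $\mathbb{R}^4=\Pi\oplus\langle v\rangle$, so a single top vector is impossible and at least two genuinely interlocking top vectors are required; this is what keeps the configuration from degenerating.

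With these two opposing constraints in hand, the remaining work is a finite case analysis over the in-plane type together with the admissible families of top vectors, using the prime minor conditions of Lemmas~\ref{lem-0}, \ref{lem-1} and~\ref{lem-2} to fix exact entries and to discard patterns that either produce a forbidden generic $5$-tuple or split the lattice. I expect the in-plane type~\ref{ex:3-1} and type~\ref{ex:3-3} cases to be eliminated (the former cannot be made s-irreducible without creating a generic $5$-tuple, the latter already carries enough coplanar shortest vectors to fall under the companion generic-$5$-tuple analysis), leaving type~\ref{ex:3-2} with exactly the two top vectors $e_4$ and $e_1+e_2+e_4$, so that $N=7$ and $Y$ is the matrix of Example~\ref{ex:7-1}; uniqueness up to congruence then follows from Corollary~\ref{cor-unique} once one checks that $\{A^tA\mid A\in Y\}$ has rank $N$. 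The hard part will be the bookkeeping of this enumeration: simultaneously respecting the coordinate-agreement conditions coming from every in-plane basis, the sign and minor restrictions, and s-irreducibility, while verifying that no admissible pattern other than that of Example~\ref{ex:7-1} survives.
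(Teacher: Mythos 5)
Your skeleton is essentially the paper's: use Lemma~\ref{lem:irreducible} to get a generic $4$-tuple, normalize via primality and Lemma~\ref{lem-3} so the tuple becomes $\{\eta_1,\eta_2,\eta_3,\eta_1+\eta_2+\eta_3\}$ with all entries in $\{0,1\}$, invoke s-irreducibility to force a second ``top'' vector besides $\eta_4$, and prune with no-generic-$5$-tuple constraints plus the minor conditions of Lemmas~\ref{lem-0}--\ref{lem-2}. Your ``engine'' --- for two top vectors $v,w$ the in-plane difference $v-w$ must have a vanishing coordinate with respect to every independent triple drawn from $\Xi\cap\Pi$ --- is correct and is a clean systematization of the paper's individual $5$-tuple constructions: it reproduces the exclusion of $2$-null and $0$-null top vectors and, combined with the partial order of Lemma~\ref{lem-2}, the uniqueness of $\eta=\eta_1+\eta_2+\eta_4$. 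Note though that the paper argues in the opposite order: it first pins down $\Xi\setminus\Pi=\{\eta_4,\eta\}$ and only then asks which pairwise sums $\eta_i+\eta_j$ can coexist with $\eta$, which avoids your case split over in-plane types entirely and is why its proof is short.

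There is, however, a concrete false step in your plan: the claim that the in-plane type of Example~\ref{ex:3-1} ``cannot be made s-irreducible without creating a generic $5$-tuple.'' The configuration $\Xi=\{\eta_1,\eta_2,\eta_3,\,\eta_1+\eta_2+\eta_3,\,\eta_4,\,\eta_1+\eta_2+\eta_4\}$ is s-irreducible (any splitting $V_1\oplus V_2$ fails: the relation $\eta_1+\eta_2+\eta_3\in\Xi$ forces the four in-plane vectors into one summand, and $\eta_1+\eta_2+\eta_4$ then drags $\eta_4$ in as well), and it contains no generic $5$-tuple, since the only top difference is $\eta_1+\eta_2$, which has a zero coordinate in every independent triple from $\{\eta_1,\eta_2,\eta_3,\eta_1+\eta_2+\eta_3\}$ --- your own engine confirms this. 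So an honest run of your enumeration produces this $N=6$ subpattern in addition to the full $N=7$ pattern; this is consistent with the lemma, whose conclusion is the containment $N\leq 7$ with $Y$ among the columns of Example~\ref{ex:7-1} (the paper's proof ends with ``$\eta_1+\eta_2$ is the only vector \emph{allowed}''), and it is exactly why Example~\ref{ex:7-1} carries $c_3^2=0$: the vector $(1,1,0,0)$ may be absent from the combinatorial data, and is only forced back onto the ellipsoid later by the critical $Q$. Your assertion of exactly $N=7$ therefore overshoots both the truth and the statement. Two smaller points: Theorem~\ref{thm:3} classifies $\lambda_1$-minimal immersions, so you cannot cite it for the in-plane sublattice; what you need (and may use) is only the combinatorial half of its proof --- a prime rank-$3$ configuration containing a generic $4$-tuple is the tuple plus at most two pairwise sums, by the wedge identity $(\xi_1+\xi_2)\wedge(\xi_2+\xi_3)\wedge(\xi_1+\xi_3)=2\,\xi_1\wedge\xi_2\wedge\xi_3$. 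And your parenthetical claim that $\Lambda_4^*\cap\Pi$ inherits primality is true but needs the one-line argument: three independent in-plane shortest vectors together with one shortest vector outside $\Pi$ (which exists by s-irreducibility) generate $\Lambda_4^*$, and restricting to $\Pi$ kills the fourth coefficient.
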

\begin{proof}
It follows from Lemma~\ref{lem:irreducible} that there always exists a generic $4$-tuple in $\Xi$. Set $\{\eta_1, \eta_2, \eta_3, \eta_4\}$ to be a generator of $\Lambda_4^*$ such that the generic $4$-tuple is given by
$$\{\eta_1, \eta_2, \eta_3, \eta_5\triangleq \eta_1+\eta_2+\eta_3\}.$$
Using Lemma~\ref{lem-3}, we can assume that all the coordinates of lattice vectors in $\Xi\cap \mathrm{Span}_{\mathbb{Z}} \{\eta_1, \eta_2,\eta_3\}$ take values in $\{0,1\}$. 

Since $\Lambda_4^*$ is s-irreducible, there is at least one vector in $\Xi\setminus \mathrm{Span}_{\mathbb{Z}}\{\eta_1, \eta_2, \eta_3\}$ other than $\eta_4$. Let $\eta$ be such vector which can not be $2$-null. Otherwise, 
 $\eta=\eta_j\pm\eta_4$ for some $1\leq j \leq3$ and then $\{\eta_1, \eta_2, \eta_3,\eta_4, \eta_5,\eta\}\setminus \{\eta_j\}$ constitute a generic $5$-tuple. So we can assume $\eta$ is $(a_1, a_2, 0, 1)$ by the symmetry of $\eta_1,\eta_2,\eta_3$.
It follows from Lemma~\ref{lem-2} that $ a_1a_2>0$. Therefore, after changing the direction of $\eta$ and $\eta_4$ if necessary we can assume  $ a_1=a_2=1$. Moreover, using Lemma~\ref{lem-1} and the partial order according to $\{1,2,3\}$ (see Lemma~\ref{lem-2}), we know there exists no other $1$-null vectors in $\Xi\setminus \mathrm{Span}_{\mathbb{Z}}\{\eta_1, \eta_2, \eta_3\}$,  which means
$\Xi\setminus \mathrm{Span}_{\mathbb{Z}}\{\eta_1, \eta_2, \eta_3\}=\{\eta_4, \eta\}.$

If  $\eta_1+\eta_3$ (resp. $\eta_2+\eta_3$) belongs to $\Xi$, then $\{\eta_4, \eta, \eta_5\}$ together with $\{\eta_1+\eta_3, \eta_1\}$ (resp. $\{\eta_2+\eta_3, \eta_2\}$) constitute a generic $5$-tuple. Therefore, besides $\{\eta_1, \eta_2, \eta_3,\eta_5\}$, $\eta_1+\eta_2$ is the only vector may allowed  in $\Xi$ from $\mathrm{Span}_{\mathbb{Z}} \{\eta_1, \eta_2,\eta_3\}$, which completes the proof of this lemma.
\end{proof}
\begin{lemma}\label{lem-4}
Suppose $\Xi$ contains a generic $5$-tuple $X$. Then we can choose $\Xi'$ such that $\sharp(\Xi'\cap X)= 3$.
\end{lemma}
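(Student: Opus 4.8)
The plan is to turn the statement into an explicit combinatorial problem about $\{0,1\}$-vectors and settle it by a counting argument. Since the exceptional lattice \eqref{eq:exception} was already handled in Proposition~\ref{prop:except}, I may assume $\Lambda_4^*$ is prime. First I would normalize the generic $5$-tuple $X$. By Definition~\ref{def:prime} any four linearly independent vectors of $X$, say $\xi_1,\xi_2,\xi_3,\xi_4$, form a generator, and by Lemma~\ref{lem:prime} the fifth vector is $\sum_i \varepsilon_i\xi_i$ with $\varepsilon_i\in\{0,\pm1\}$; genericity forces every $\varepsilon_i\neq0$, so $\varepsilon_i=\pm1$. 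Replacing some $\xi_i$ by $-\xi_i$, the fifth vector becomes $\xi_1+\xi_2+\xi_3+\xi_4$, so the hypothesis of Lemma~\ref{lem-3} is met and I may assume that, in the basis $\{\xi_i\}$, every vector of $\Xi$ has coordinates in $\{0,1\}$. Writing $\xi_i=e_i$ and the fifth vector as $\mathbf 1=(1,1,1,1)$ realizes $X=\{e_1,e_2,e_3,e_4,\mathbf 1\}$ and $\Xi\subseteq\{0,1\}^4$.

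Next I would pin down the target. A rank-$3$ subspace meets $X$ in at most $3$ vectors, since any $4$ of them are linearly independent, so $\sharp(\Xi'\cap X)\le 3$ for every admissible $\Xi'$. A hyperplane $V=b^{\perp}$ meets $X$ in $\sharp\{i:b_i=0\}+[\,\sum_l b_l=0\,]$ vectors, so those meeting $X$ in exactly $3$ are precisely the four coordinate hyperplanes $H_l=\{x_l=0\}$ (for which $\sharp(H_l\cap X)=3$, namely $\{e_i:i\neq l\}$) together with the six hyperplanes $\{x_k=x_l\}$ spanned by $e_i,e_j,\mathbf 1$. Since the coordinate hyperplanes are among these ``good'' hyperplanes, it suffices to prove that no hyperplane meeting $X$ in at most $2$ vectors carries more shortest vectors than the best coordinate hyperplane; the desired $\Xi'$ is then a good hyperplane attaining $m(\Xi)$.

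The quantitative core is a double count over coordinates. With $N=\sharp\Xi$,
\[
\sum_{l=1}^4\sharp(\Xi\cap H_l)=\sum_{x\in\Xi}\bigl(4-|\operatorname{supp}(x)|\bigr)=4N-\sum_{x\in\Xi}|\operatorname{supp}(x)|,
\]
which forces $\max_l\sharp(\Xi\cap H_l)$ to be large once one notes that most vectors of $\Xi$ have small support: the length condition $\langle\xi_i,\xi_j\rangle\ge-\tfrac12$ together with $\sum_{i<j}\langle\xi_i,\xi_j\rangle=-\tfrac32$ (from $|\mathbf 1|=1$) sharply restricts which support-$2$ and support-$3$ vectors can be shortest. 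I would then bound $\sharp(\Xi\cap V)$ from above for an arbitrary hyperplane $V=b^{\perp}$ meeting $X$ in at most $2$ vectors: its shortest vectors are exactly the $\{0,1\}$-vectors of $\Xi$ annihilated by $b$, and the prime-minor constraints of Lemmas~\ref{lem-0}, \ref{lem-1} and \ref{lem-2} impose rigid sign and support patterns on them.

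The main obstacle is precisely this upper bound, since a priori a hyperplane with few vectors of $X$ could still carry many $\{0,1\}$-vectors. I would organize the estimate by the zero pattern of the normal $b$. If some $b_l=0$ then $V\ni e_l$ and the question descends to a rank-$3$ sublattice, where it can be compared directly against a coordinate hyperplane by the rank-$3$ analysis behind Theorem~\ref{thm:3}. If every $b_l\neq0$, then Lemma~\ref{lem-1} fixes the signs of the coordinates of the ``extra'' vectors lying above the chosen generator, cutting the problem down to finitely many configurations, each of which is checked against the counting bound above. This finite casework, controlled throughout by the Gram relations and the minor conditions for prime lattices, is where the real work lies, while the normalization and the reduction to coordinate hyperplanes are comparatively routine.
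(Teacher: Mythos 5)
Your normalization coincides with the paper's: choose the generator inside $X$, use Lemma~\ref{lem:prime} and genericity to write the fifth vector as $\eta_5=\eta_1+\eta_2+\eta_3+\eta_4$, and invoke Lemma~\ref{lem-3} to put all of $\Xi$ into $\{0,1\}^4$; your reformulation --- that $m(\Xi)$ is attained on a hyperplane meeting $X$ in exactly three vectors, these ``good'' hyperplanes being the four $\{x_l=0\}$ and the six $\{x_k=x_l\}$ --- is also correct. The genuine gap is that the reformulated claim is never proved. The identity $\sum_{l}\sharp(\Xi\cap H_l)=4N-\sum_{x\in\Xi}|\operatorname{supp}(x)|$ only controls the \emph{sum} of the four coordinate-hyperplane counts; it gives no upper bound at all on $\sharp(\Xi\cap V)$ for an individual bad hyperplane $V$, and making it bite would require controlling the distribution of supports in $\Xi$, which is exactly the casework you defer (``this is where the real work lies''). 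Worse, your strengthened target --- that no bad hyperplane beats the best \emph{coordinate} hyperplane --- is sharper than anything the paper establishes and is not evidently true: in the paper's $N=7$ configuration (with $\Xi=\{\eta_1,\dots,\eta_5,\,\eta_1+\eta_2+\eta_3,\,\eta_1+\eta_4\}$) the maximum $m(\Xi)=5$ is attained only by the non-coordinate good hyperplane $\{x_2=x_3\}$, while every coordinate hyperplane carries just four vectors of $\Xi$. Finally, the appeal to ``the rank-$3$ analysis behind Theorem~\ref{thm:3}'' when some $b_l=0$ is not available in the form you need: that theorem classifies $\lambda_1$-minimal $3$-tori, not counts of shortest vectors in rank-$3$ sublattices of a rank-$4$ lattice.

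For comparison, the paper closes the argument with two devices absent from your sketch. First, a maximal intersection $\Xi'$ containing a $2$-null vector is renormalized by re-choosing the generator \emph{inside} $X$ (e.g.\ $\{\eta_5,-\eta_3,-\eta_4,-\eta_1\}$ turns $\eta_1+\eta_2$ into a $1$-null vector), so one may always assume $\eta:=\eta_1+\eta_2+\eta_3\in\Xi'$; you fix the basis once and lose this flexibility. Second, once $\eta\in\Xi'$, Lemma~\ref{lem-2} forces the vectors of $\Xi'$ outside $\mathrm{Span}_{\mathbb{Z}}\{\eta_1,\eta_2,\eta_3\}$ to obey the partial order according to $\{1,2,3\}$, hence at most one $k$-null vector there for each $k$; splitting on $\sharp(\Xi'\cap\{\eta_1,\eta_2,\eta_3\})\in\{2,1,0\}$ then yields either $\Xi'=\Xi\cap\mathrm{Span}_{\mathbb{Z}}\{\eta_1,\eta_2,\eta_3\}$ (three vectors of $X$), or $\eta_4,\eta_5\in\Xi'$ simultaneously via $\eta_5-\eta_4=\eta$ (again three vectors of $X$), or a contradiction in the empty case, because subtracting $\eta_5$ (resp.\ $\eta_4$) from a $1$-null (resp.\ $2$-null) vector of $\Xi'$ produces some $\eta_i$ inside the sublattice. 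Without these steps, or a completed version of your counting estimate, the proposal remains an outline rather than a proof.
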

\begin{proof}

We assume $X=\{\eta_1, \eta_2, \eta_3, \eta_4, \eta_5\}$ and $\eta_5=\eta_1+\eta_2+\eta_3+\eta_4$.  Choose $\{\eta_1, \eta_2, \eta_3, \eta_4\}$ to be a generator of $\Lambda_4^*$,
then by Lemma~\ref{lem-3} we have all coordinates of vectors in $\Xi$ taking values in $\{0,1\}$.

Since $\mathrm{rank}(\Xi')=3$, it suffices to prove that  we can choose $\Xi'$ such that $\sharp(\Xi'\cap X)\geq 3$.

If $\Xi'$ contains only $0$-null or $3$-null vectors (i.e. $\eta_i$) then $\Xi=\{\eta_1, \eta_2, \eta_3, \eta_4, \eta_5\}$ and $m(\Xi)=3$. For $m(\Xi)>3$, $\Xi'$ contains at least a $1$-null  or $2$-null vector. Note that by rechoosing  a generator of $\Lambda_4^*$ in $X$, a given $2$-null vector (such as $\eta_1+\eta_2$) can be transformed to a $1$-null vector (such as $\eta_1+\eta_2+\eta_3$ by choosing $\{\eta_5, -\eta_3, -\eta_4, -\eta_1\}$ as  a new generator). Therefore, without loss of generality, we assume there is a $1$-null vector
$\eta\triangleq \eta_1+\eta_2+\eta_3\in\Xi'$.  

It is easy to see that if $\sharp(\Xi'\cap \{\eta_1, \eta_2,\eta_3\})=2$ then $\Xi'=\Xi\cap \mathrm{Span}_{\mathbb{Z}}\{\eta_1,\eta_2,\eta_3\}$.

If $\sharp(\Xi'\cap \{\eta_1, \eta_2,\eta_3\})=1$, we can assume it is $\eta_1$. Since $\sharp(\Xi\cap\mathrm{Span}_{\mathbb{Z}} \{\eta_1, \eta_2,\eta_3\})\geq 4$ and $\eta_1,\eta\in\Xi'$, we know $\sharp(\Xi'\setminus\mathrm{Span}_{\mathbb{Z}} \{\eta_1, \eta_2,\eta_3\})\geq 2$ in which all the vectors have coordinates $1$ with respect to $\eta_4$. It follows from Lemma~\ref{lem-2} that the existence of $\eta=\eta_1+\eta_2+\eta_3$ implies the vectors in $\Xi'\setminus\mathrm{Span}_{\mathbb{Z}} \{\eta_1, \eta_2,\eta_3\}$ 
obey the partial order according to $\{1,2,3\}$, which means there are at most one $k$-null vector in $\Xi'\setminus\mathrm{Span}_{\mathbb{Z}} \{\eta_1, \eta_2,\eta_3\}$ for every $0\leq k\leq 3$.
When $\Xi'\setminus\mathrm{Span}_{\mathbb{Z}} \{\eta_1, \eta_2,\eta_3\}$ is composed of exactly two vectors, we get $\Xi\cap\mathrm{Span}_{\mathbb{Z}} \{\eta_1, \eta_2,\eta_3\}$ also attaining $m(\Xi)$ so that it can be chosen as the $\Xi'$ we desired. Or else, $\Xi'$ must contain $\eta_4$ or $\eta_5$. Since  $\eta_5-\eta_4=\eta\in \Xi'$, we have $\Xi'$ must contain both $\eta_4$ and $\eta_5$ simultaneously, 
which implies 
$\eta_1,\eta_4,\eta_5\in\Xi'$. The third one in $\Xi'\setminus\mathrm{Span}_{\mathbb{Z}} \{\eta_1, \eta_2,\eta_3\}$ must be $\eta_1+\eta_4$ if it's $2$-null or  $\eta_2+\eta_3+\eta_4$ if it's $1$-null. Otherwise, $\eta_2$ or $\eta_3$ will be contained in $\Xi'$. Moreover, only one of these two vectors could be allowed in $\Xi'$ for they violate the partial order according to $\{1,2,3\}$. This is obviously a case of $N=7$.

If $\Xi'\cap \{\eta_1, \eta_2,\eta_3\}=\emptyset$ then $\Xi'\cap \mathrm{Span}_{\mathbb{Z}}\{\eta_1, \eta_2,\eta_3\}=\{\eta\}$. Similarly, we have $\sharp(\Xi'\setminus\mathrm{Span}_{\mathbb{Z}} \{\eta_1, \eta_2,\eta_3\})\geq 3$ and all the vectors in $\Xi'\setminus\mathrm{Span}_{\mathbb{Z}} \{\eta_1, \eta_2,\eta_3\}$ admit at most one $k$-null vector for every $0\leq k\leq 3$. Therefore, $\Xi'\cap\{\eta_4,\eta_5\}\not=\emptyset$ and thus $\eta_4,\eta_5\in\Xi'$. 
Note that there are still others left in $\Xi'\setminus\mathrm{Span}_{\mathbb{Z}}\{\eta_1,\eta_2,\eta_3\}$. However, any $1$-null (resp. $2$-null)  minus $\eta_5$ (resp.  $\eta_4$) will yield $\Xi'\cap\{\eta_1, \eta_2,\eta_3\}\not=\emptyset$. Hence we finish the proof by this contradiction. 
\end{proof}

\begin{remark}\label{rem:N>7}
From the proof of above two lemmas one can see that for $N\geq 8$, we can always choose a generic $5$-tuple and a generator of $\Lambda_4^*$ as in Lemma \ref{lem-4} such that $\Xi'=\Xi\cap\mathrm{Span}_\mathbb{Z} \{\eta_1, \eta_2,\eta_3\}$ and $\eta\in \Xi'$.
\end{remark}

\begin{proposition}\label{lem-N<8}
If $N\leq 7$ and there exists a generic $5$-tuple in $\Xi$, then we can find  a generator of $\Lambda_4^*$ such that $Y$ takes the form as given in Example~\ref{ex:5-1}, Example~\ref{ex:6-1}, Example~\ref{ex:7-2} and Example~\ref{ex:7-3}.
\end{proposition}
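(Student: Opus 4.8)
The plan is to fix a generic $5$-tuple $X=\{\eta_1,\eta_2,\eta_3,\eta_4,\eta_5\}$ with $\eta_5=\eta_1+\eta_2+\eta_3+\eta_4$ and, following Lemma~\ref{lem-4}, to take $\{\eta_1,\eta_2,\eta_3,\eta_4\}$ as a generator of $\Lambda_4^*$ together with an intersection $\Xi'$ attaining $m(\Xi)$ such that $\Xi'=\Xi\cap\mathrm{Span}_\mathbb{Z}\{\eta_1,\eta_2,\eta_3\}$ and $\eta_1,\eta_2,\eta_3\in\Xi'$. By Lemma~\ref{lem-3} all coordinates of the vectors in $\Xi$ then lie in $\{0,1\}$, and $\Xi$ splits as $\Xi'\cup\Xi''$, where $\Xi''$ collects the vectors whose $\eta_4$-coordinate equals $1$. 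Since $\eta_4,\eta_5\in\Xi''$, we have $N=m(\Xi)+\sharp\Xi''\geq m(\Xi)+2$, whence $m(\Xi)\leq N-2\leq 5$. The idea is to organize the whole classification by the value of $m(\Xi)\in\{3,4,5\}$, using that $\Xi'$ is exactly the shortest-vector set of the rank-$3$ prime sublattice $\mathrm{Span}_\mathbb{Z}\{\eta_1,\eta_2,\eta_3\}$, so that by the rank-$3$ classification (proof of Theorem~\ref{thm:3}) its combinatorial shape is one of the standard forms of size $3$, $4$ or $5$. Throughout, Corollary~\ref{cor-unique} guarantees that once $Y$ is determined up to the $SL(4,\mathbb{Z})$-action the torus and its immersion are fixed, so it suffices to pin down $Y$ up to change of generator, the $S_3$-symmetry permuting $\eta_1,\eta_2,\eta_3$, and the generator-change interchanging a $2$-null and a $1$-null vector used in Lemma~\ref{lem-4}.

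The two extreme values of $m(\Xi)$ are quick. For $m(\Xi)=3$ one has $\Xi'=\{\eta_1,\eta_2,\eta_3\}$, and any further $w\in\Xi''$ (necessarily $2$-null or $1$-null with $\eta_4$-coordinate $1$) would, together with two of the $\eta_i$, lie in a rank-$3$ sublattice meeting $\Xi$ in four vectors, contradicting $m(\Xi)=3$; hence $\Xi''=\{\eta_4,\eta_5\}$, $N=5$, and $Y$ is Example~\ref{ex:5-1}. For $m(\Xi)=5$ the only rank-$3$ shape of size $5$ is the Example~\ref{ex:3-2} configuration $\{\eta_1,\eta_2,\eta_1+\eta_2,\eta_3,\eta_1+\eta_2+\eta_3\}$, and then $\Xi''=\{\eta_4,\eta_5\}$ with $N=7$; reading off the coordinates gives precisely the $Y$ of Example~\ref{ex:7-2}.

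The main work, and the principal obstacle, is the case $m(\Xi)=4$, where $\Xi'$ is either the generic $4$-tuple $\{\eta_1,\eta_2,\eta_3,\eta_1+\eta_2+\eta_3\}$ or the reducible shape $\{\eta_1,\eta_2,\eta_1+\eta_2,\eta_3\}$, and $\Xi''$ has $2$ or $3$ elements. When $N=6$ one has $\Xi''=\{\eta_4,\eta_5\}$, and after the $2$-null/$1$-null generator change both shapes collapse to Example~\ref{ex:6-1}. When $N=7$ a single extra vector $w\in\Xi''$ is adjoined, and here I would combine three constraints to determine it: the partial-order restriction of Lemma~\ref{lem-2}, forced by the $2$-null vector in $\Xi'$ (or by $\eta_1+\eta_2+\eta_3$) and by $\eta_5$, which makes the $\{1,2\}$- or $\{1,2,3\}$-coordinates of $\Xi''$ totally ordered; the sign-consistency of Lemma~\ref{lem-1}; and primality through Lemma~\ref{lem-0}, requiring every minor of $Y$ to lie in $\{0,\pm1\}$. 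The delicate point is that several a priori distinct choices of $w$ (for instance a $2$-null vector $\eta_1+\eta_4$ versus $1$-null vectors $\eta_i+\eta_j+\eta_4$) in fact coincide after a change of generator or an $S_3$-relabelling, while certain other choices secretly raise $m(\Xi)$ to $5$ and therefore belong to the previous case. I expect the invariant $m(\Xi)$ itself to be the cleanest sorting device, ruling out spurious fifth configurations and leaving exactly Example~\ref{ex:7-3}. Verifying that no admissible $w$ survives outside the orbits of Examples~\ref{ex:7-2} and~\ref{ex:7-3}, i.e. that the partial-order and minor constraints are jointly exhaustive, is the crux of the argument.
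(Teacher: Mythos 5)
Your organizing idea --- sorting the classification by the invariant $m(\Xi)\in\{3,4,5\}$ instead of by $N$ as the paper does --- is legitimate in principle, and your cases $m(\Xi)=3$ (giving Example~\ref{ex:5-1}) and $m(\Xi)=4$, $N=6$ (giving Example~\ref{ex:6-1}) go through. But the argument breaks precisely at the point you yourself flag as the crux, and not merely because it is unfinished: your sorting assigns Example~\ref{ex:7-3} to the wrong bucket. For the lattice of Example~\ref{ex:7-3}, whose shortest vectors have coordinates $\eta_1,\eta_2,\eta_3,\eta_1+\eta_2+\eta_3,\eta_4,\eta_2+\eta_3+\eta_4,\eta_5$, the functional $v\mapsto v_2-v_3$ vanishes on five of the seven, so $m(\Xi)\geq 5$, and by your own bound $m(\Xi)\leq N-2=5$ one gets $m(\Xi)=5$ exactly (Example~\ref{ex:7-2} also has $m(\Xi)=5$, via $v\mapsto v_1-v_2$). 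Hence your $m(\Xi)=4$, $N=7$ case is actually empty, and your $m(\Xi)=5$ case must produce \emph{both} Example~\ref{ex:7-2} and Example~\ref{ex:7-3}, whereas you claim it produces only Example~\ref{ex:7-2}. As written, the two errors would compound into a seemingly complete but false classification.

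The root cause is the assertion that ``the only rank-$3$ shape of size $5$ is the Example~\ref{ex:3-2} configuration.'' That is true only up to $SL(3,\mathbb{Z})$ changes of basis of the sublattice, and such changes need not preserve your four-dimensional normalization in which $\eta_5$ is the all-ones vector and $\Xi''=\{\eta_4,\eta_5\}$. Under that normalization (coordinates in $\{0,1\}$ by Lemma~\ref{lem-3}), $\Xi'$ consists of $\eta_1,\eta_2,\eta_3$ plus two vectors from $\{\eta_1+\eta_2,\eta_1+\eta_3,\eta_2+\eta_3,\eta_1+\eta_2+\eta_3\}$; besides your shape $\{\eta_i+\eta_j,\ \eta_1+\eta_2+\eta_3\}$ there is a second admissible shape consisting of two $2$-null vectors, e.g.\ $\{\eta_1+\eta_3,\eta_2+\eta_3\}$ (three $2$-nulls being excluded by the wedge identity). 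The sublattice basis change converting this second shape into the Example~\ref{ex:3-2} shape does not extend to a generator change of $\Lambda_4^*$ fixing the generic-$5$-tuple normalization, so it cannot be discarded; indeed it \emph{is} Example~\ref{ex:7-3}: relabelling its $5$-tuple as $\{\eta_1,\eta_4,-\eta_5,\eta_2,-\eta_3\}$ puts its maximal $\Xi'$ into the span of the first three, with $\eta_1+\eta_2+\eta_3=-(\tilde\eta_2+\tilde\eta_3)$ and $\eta_2+\eta_3+\eta_4=-(\tilde\eta_1+\tilde\eta_3)$ realizing exactly the two-$2$-null shape. To repair your proof you must add this shape to the $m(\Xi)=5$ analysis and show it yields Example~\ref{ex:7-3}, and show the $m(\Xi)=4$, $N=7$ case is vacuous. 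Two further small gaps: your opening normalization $\Xi'=\Xi\cap\mathrm{Span}_{\mathbb{Z}}\{\eta_1,\eta_2,\eta_3\}$ with $\eta_1,\eta_2,\eta_3\in\Xi'$ is guaranteed by the paper only for $N\geq 8$ (Remark~\ref{rem:N>7}), and for $N=7$ needs the $S_5$-relabelling of the generic $5$-tuple made explicit. The paper avoids all of this by never invoking $m(\Xi)$ here: for $N\geq 6$ it first applies the $2$-null-to-$1$-null generator change from the proof of Lemma~\ref{lem-4} to force $\eta_1+\eta_2+\eta_3\in\Xi$, then for $N=7$ splits on whether the single remaining vector lies in $\mathrm{Span}_{\mathbb{Z}}\{\eta_1,\eta_2,\eta_3\}$, reaching Example~\ref{ex:7-2} or, after one explicit generator change, Example~\ref{ex:7-3}.
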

\begin{proof}
Suppose $\{\eta_1, \eta_2, \eta_3, \eta_4, \eta_5\}$ is a generic $5$-tuple in $\Xi$, with $\eta_5=\eta_1+\eta_2+\eta_3+\eta_4$. 
When 
$N=5$, it is easy to see that after choosing $\{\eta_1, \eta_2, \eta_3, \eta_4\}$ to be a generator of $\Lambda^*_4$, we obtain $Y$ as given in Example~\ref{ex:5-1}. 

When $N\geq 6$, similarly as discussed in the proof of Lemma~\ref{lem-4}, we can assume
$$\eta_1, \eta_2, \eta_3, \eta_1+\eta_2+\eta_3, \eta_4, \eta_5\in \Xi.$$
For the case of $N=6$, by choosing $\{\eta_1, \eta_2, \eta_3, \eta_4\}$ to be a generator of $\Lambda^*_4$, we arrive at Example~\ref{ex:6-1}. 
For the case of $N=7$, 
if the 
remainder lattice vector lies on $\mathrm{Span}_{\mathbb{Z}}\{\eta_1, \eta_2, \eta_3\}$, then we obtain $Y$ as given in Example~\ref{ex:7-2} after a permutation in $\{\eta_1, \eta_2, \eta_3\}$. Now we assume the remainder lies on $\Xi\setminus\mathrm{Span}_{\mathbb{Z}}\{\eta_1, \eta_2, \eta_3\}$. Either it is $1$-null, which can be assumed $\eta_2+\eta_3+\eta_4$ without loss of generality, so that we arrive at Example~\ref{ex:7-3}. 
Or it is $2$-null, which can be assumed $\eta_3+\eta_4$ without loss of generality. Then choosing $\{-(\eta_1+\eta_2+\eta_3), \eta_5, -(\eta_3+\eta_4), -\eta_2\}$ as a generator of  $\Lambda_4^*$, we also obtain $Y$ as in Example~\ref{ex:7-3}.
\end{proof}
 
\begin{proposition}
If $N\geq 8$, then there exist a generator of $\Lambda_4^*$ such that $Y$ takes the form as given in Example~\ref{ex:8-1}$\sim$Example~\ref{ex:10}.
\end{proposition}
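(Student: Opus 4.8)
The plan is to push the normalization of Remark~\ref{rem:N>7} as far as the combinatorics of the shortest vectors allows, and then to stratify by the two invariants $m(\Xi)$ and $\sharp(\Xi'')$, where $\Xi''\triangleq\Xi\setminus\Xi'$. First I would fix the generator $\{\eta_1,\eta_2,\eta_3,\eta_4\}$ supplied by Remark~\ref{rem:N>7}, so that $\eta_5=\eta_1+\eta_2+\eta_3+\eta_4\in\Xi$, $\eta=\eta_1+\eta_2+\eta_3\in\Xi'=\Xi\cap\{v_4=0\}$, and $\Xi'$ attains $m(\Xi)$. Since $\eta_5=(1,1,1,1)$ lies in $\Xi$, Lemma~\ref{lem-3} lets me assume every vector of $\Xi$ has coordinates in $\{0,1\}$. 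Applying Lemma~\ref{lem-2} to $\eta=(1,1,1,0)$ with each choice $(i_1,i_2,i_3)\in\{(1,2,4),(1,3,4),(2,3,4)\}$ shows that $\Xi''=\Xi\cap\{v_4=1\}$ obeys the partial order according to $\{1,2,3\}$; hence the supports of its members, read off the first three coordinates, form a chain in the Boolean lattice $2^{\{1,2,3\}}$. As distinct members of $\Xi''$ carry distinct supports, this forces $\sharp(\Xi'')\le 4$, and therefore $N=m(\Xi)+\sharp(\Xi'')\le 10$. Since $\eta_4=(0,0,0,1)$ and $\eta_5=(1,1,1,1)$ sit at the bottom and top of the chain, its ``middle'' is either empty, a single support, or a nested pair $\{i\}\subset\{i,j\}$.

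Next I would identify $\Xi'$. The vectors $\eta_1,\eta_2,\eta_3,\eta$ form a generic $4$-tuple inside the rank-$3$ sublattice $\mathrm{Span}_{\mathbb{Z}}\{\eta_1,\eta_2,\eta_3\}$, so that sublattice is a prime, s-irreducible lattice of rank $3$; by Theorem~\ref{thm:3} its shortest-vector set $\Xi'$ is $SL(3,\mathbb{Z})$-equivalent to one of Examples~\ref{ex:3-1}--\ref{ex:3-3}, i.e.\ $m(\Xi)\in\{4,5,6\}$ with $\Xi'$ of ``type $3$-$1$/$3$-$2$/$3$-$3$''. As $N\ge 8$ and $\sharp(\Xi'')\le 4$, one has $m(\Xi)\ge 4$. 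I would then discard the stratum $m(\Xi)=4,\ \sharp(\Xi'')=4$: here $\Xi'=\{e_1,e_2,e_3,e_1+e_2+e_3\}$ and the length-$4$ chain must be $\{\varnothing,\{i\},\{i,j\},\{1,2,3\}\}$, but after relabelling the hyperplane $\{v_3=0\}$ already contains five vectors of $\Xi$, contradicting $m(\Xi)=4$. Thus the admissible strata are exactly $(m(\Xi),\sharp(\Xi''))\in\{(6,2),(5,3)\}$ for $N=8$, $\{(6,3),(5,4)\}$ for $N=9$, and $(6,4)$ for $N=10$.

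Within each stratum I would run the remaining enumeration over the possible middle supports of the chain and their placement relative to the two-null vectors already present in $\Xi'$, cutting the list down with the prime condition in the form of Lemma~\ref{lem-0} (all minors of $Y$ lie in $\{0,\pm1\}$). For instance, in the stratum $(6,3)$ a singleton middle $M=\{2\}$ along the ``central'' coordinate of the type-$3$-$3$ pattern produces a $4\times 4$ minor equal to $\pm 2$, so it is rejected, while $M\in\{\{1\},\{3\}\}$ are exchanged by the order-two symmetry ($1\leftrightarrow 3$) of the type-$3$-$3$ support pattern and both yield the matrix of Example~\ref{ex:9-1}; the pair-chains are reduced to these by a further change of generator inside the generic $5$-tuple. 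Carrying this out I expect $(6,2)\mapsto$ Example~\ref{ex:8-1}, $(5,3)\mapsto$ Example~\ref{ex:8-2}, $(6,3)\mapsto$ Example~\ref{ex:9-1}, $(5,4)\mapsto$ Example~\ref{ex:9-2}, and $(6,4)\mapsto$ Example~\ref{ex:10}, each up to $SL(4,\mathbb{Z})$ and sign changes. For each surviving $Y$ the Gram matrix $Q$ is then determined uniquely by Corollary~\ref{cor-unique} (the matrices $\{A^tA:A\in Y\}$ have rank $N$), and the $\lambda_1$-property—that no lattice vector is shorter than the $\xi_i$—is confirmed by the finite check of Theorem~\ref{thm:special}, exactly as recorded in Examples~\ref{ex:8-1}--\ref{ex:10}.

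The hard part will be organizing the stratum-by-stratum enumeration so that it is simultaneously exhaustive and non-redundant. One must be sure that maximality of $m(\Xi)$ has been used both to eliminate every ``disguised'' low-$m$ configuration (as in the $(4,4)$ case) and to guarantee that a chosen chain placement does not secretly raise $m(\Xi)$ into a different stratum, so that each lattice is counted in exactly one stratum; that the prime-minor test rejects precisely the incompatible placements of the chain; and that the residual coordinate symmetries of each $\Xi'$-type together with the admissible changes of generator collapse all survivors of a stratum onto the single listed example rather than onto several $SL(4,\mathbb{Z})$-inequivalent ones. The bookkeeping of which two-null vectors of $\Xi'$ may coexist—recall that $\{1,2\}$, $\{2,3\}$, $\{1,3\}$ cannot all occur—interacts delicately with where the chain is allowed to sit, and this is where the argument is most prone to becoming incomplete or double-counted.
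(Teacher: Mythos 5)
Your proposal is essentially the paper's own proof: the same normalization via Remark~\ref{rem:N>7} and Lemma~\ref{lem-3}, the chain structure on $\Xi\setminus\Xi'$ coming from Lemma~\ref{lem-2}, exclusion of $m(\Xi)=4$ by exhibiting five shortest vectors in a rank-$3$ span, stratification by $m(\Xi)\in\{5,6\}$, elimination of bad chain placements by prime minors and by maximality of $m(\Xi)$, and collapse of the survivors by permutations and changes of generator inside the generic $5$-tuple, with your stratum-to-example map $(6,2)\mapsto$ Example~\ref{ex:8-1}, $(5,3)\mapsto$ Example~\ref{ex:8-2}, $(6,3)\mapsto$ Example~\ref{ex:9-1}, $(5,4)\mapsto$ Example~\ref{ex:9-2}, $(6,4)\mapsto$ Example~\ref{ex:10} agreeing exactly with the paper's case analysis. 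The one cosmetic caveat is that you should invoke the combinatorial classification inside the proof of Theorem~\ref{thm:3} rather than its statement, since the rank-$3$ sublattice $\mathrm{Span}_{\mathbb{Z}}\{\eta_1,\eta_2,\eta_3\}$ need not itself carry a $\lambda_1$-minimal metric; but with coordinates already normalized to $\{0,1\}$, all you need is that its extra shortest vectors are $2$-null and that not all three of $\eta_1+\eta_2$, $\eta_2+\eta_3$, $\eta_1+\eta_3$ can occur, which is exactly what that proof supplies.
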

\begin{proof}
It follows from Remark~\ref{rem:N>7} that there exist a generic $5$-tuple $\{\eta_1, \eta_2, \eta_3, \eta_4,\eta_5\}$ and $\Xi'$ such that 
$$\{\eta_1, \eta_2, \eta_3, \eta\triangleq\eta_1+\eta_2+\eta_3\}\in \Xi',\quad\eta_5=\eta_1+\eta_2+\eta_3+\eta_4.$$ 
By choosing $\{\eta_1, \eta_2, \eta_3, \eta_4\}$ as a generator of $\Lambda_4^*$, it follows from Lemma~\ref{lem-3} that all coordinates of vectors in $Y$ can only take values in $\{0,1\}$. 

If $m(\Xi)=4$, the vectors in $\Xi$ other than $\eta_1, \eta_2, \eta_3, \eta, \eta_4, \eta_5$ don't lie on $\mathrm{Span}_{\mathbb{Z}}\{\eta_1,\eta_2,\eta_3\}$. As in the proof of Lemma~\ref{lem-4}, they can be assumed $\eta_3+\eta_4$ and $\eta_2+\eta_3+\eta_4$ due to the partial order according to $\{1,2,3\}$ (see Lemma \ref{lem-2}). It means $\sharp(\Xi\cap\mathrm{Span}_{\mathbb{Z}}\{\eta_2,\eta_3,\eta_4\})=5$  against $m(\Xi)=4$. So $m(\Xi)\geq 5$.

When $m(\Xi)=5$, we can assume $\eta_1+\eta_2\in \Xi'$ after making a permutation to $\{\eta_1, \eta_2, \eta_3\}$. Note that $\eta,\eta_5\in \mathrm{Span}_{\mathbb{Z}}\{\eta_1+\eta_2,\eta_3,\eta_4\}$, neither $\eta_3+\eta_4$ nor $\eta_1+\eta_2+\eta_4$ is allowed in $\Xi$ which will make $\sharp(\Xi\cap\mathrm{Span}_{\mathbb{Z}}\{\eta_1+\eta_2,\eta_3,\eta_4\})\geq 6>m(\Xi)$. Due to the partial order according to $\{1,2,3\}$ and the symmetry of $\eta_1$ and $\eta_2$, there is at most one $1$-null vector in $\Xi\setminus\mathrm{Span}_{\mathbb{Z}}\{\eta_1,\eta_2,\eta_3\}$ which can be assumed $\eta_2+\eta_3+\eta_4$, and at most one $2$-null vector in $\Xi\setminus\mathrm{Span}_{\mathbb{Z}}\{\eta_1,\eta_2,\eta_3\}$ which can be assumed $\eta_2+\eta_4$.
So $N\leq 9$. If $N=9$, both $\eta_2+\eta_3+\eta_4$ and $\eta_2+\eta_4$ exist and $Y$ takes the form as given in Example~\ref{ex:9-2}.  If $N=8$ and only $\eta_2+\eta_3+\eta_4$ exists, then we arrive at Example~\ref{ex:8-2}. If $N=8$ and only $\eta_2+\eta_4$ exists, then after choosing $\{-\eta_2, -\eta_1, -\eta_3, \eta_5\}$ as a new generator, we can also obtain $Y$ as given in Example~\ref{ex:8-2}.

When $m(\Xi)=6$, up to a permutation of $\{\eta_1, \eta_2, \eta_3\}$, we can assume $\eta_1+\eta_2, \eta_2+\eta_3 \in \Xi'$. From 
\[\begin{aligned}&(\eta_1+\eta_3)\wedge(\eta_1+\eta_2)\wedge(\eta_2+\eta_3)\wedge\eta_4=(\eta_1+\eta_3+\eta_4)\wedge(\eta_1+\eta_2)\wedge(\eta_2+\eta_3)\wedge\eta_4\\
=&(\eta_2+\eta_4)\wedge(\eta_2+\eta_3)\wedge(\eta_1+\eta_2)\wedge\eta_5
=2\eta_1\wedge\eta_2\wedge\eta_3\wedge\eta_4,
\end{aligned}\]
we know none of $\eta_1+\eta_3$, $\eta_1+\eta_3+\eta_4$ or $\eta_2+\eta_4$ can be contained in $\Xi$. 
Combining this with the partial order according to $\{1,2,3\}$ and the symmetry of $\eta_1$ and $\eta_3$, there is at most one $1$-null vector in $\Xi\setminus\mathrm{Span}_{\mathbb{Z}}\{\eta_1,\eta_2,\eta_3\}$ which can be assumed $\eta_2+\eta_3+\eta_4$, and at most one $2$-null vector in $\Xi\setminus\mathrm{Span}_{\mathbb{Z}}\{\eta_1,\eta_2,\eta_3\}$ which can be assumed $\eta_3+\eta_4$. So $N\leq 10$. If $N=8$ then $Y$ takes the form as given in Example~\ref{ex:8-1}. If $N=10$, both $\eta_2+\eta_3+\eta_4$ and $\eta_3+\eta_4$ exist, which leads to  Example~\ref{ex:10}. If $N=9$ and only $\eta_3+\eta_4$ exists, then we obtain $Y$ as given in Example~\ref{ex:9-1}. If $N=9$ and only $\eta_2+\eta_3+\eta_4$ exists, then after choosing $\{-\eta_3, -\eta_2, -\eta_1, \eta_5\}$ as a new generator, we have also $Y$ taking the form as given in Example~\ref{ex:9-1}.

The non-existence of $\eta_1+\eta_3$ implies $m(\Xi)<7$ and we finish the proof. 
\end{proof}
\begin{remark}\label{rk-maxxi}
Through the discussion in this section, we can obtain that in a prime lattice of rank $n\leq 4$, there exists at most $\frac{n(n+1)}{2}$ distinct lattice vectors of shortest length up to $\pm 1$. 
\end{remark}

\section{On $\lambda_1$-minimal flat tori of higher dimension}\label{sec-high}
Similarly as in section~\ref{sec-examples}, we can construct many examples of higher dimensional $\lambda_1$-minimal flat tori in spheres. For simplicity, we choose a certain class to introduce in this section.

Consider the set $X_n\subset \mathbb{Z}^n$ given by the column vectors of
\[
X^t_n=\begin{pmatrix}
	1 & & 1 & & & 1 &      & & & 1\\
	  &1& 1 & &1& 1 &      & & & 1\\
	  & &   &1&1& 1 &      & & & 1\\
	  & &   & & &   &\ddots& & & \vdots\\
	  & &   & & &   &      & &1\cdots&1\\
	  & &   & & &   &      &1&1\cdots& 1\\
\end{pmatrix}_{n\times\frac{n(n+1)}{2}}.
\]
We call $X_n$ the { \em ladder set}.
It is straightforward to verify that $W_{X_n}$ is a singleton set, only contains
\[
Q_n=\begin{pmatrix}
	1           &-\frac{1}{2}& & & & \\
	-\frac{1}{2}& 1          &-\frac{1}{2}&&&\\
	&-\frac{1}{2}&1&&&\\
	&&\ddots&\ddots&\ddots&\\
	&&&&1&-\frac{1}{2}\\
	&&&&-\frac{1}{2}&1\\
\end{pmatrix}.
\]
By direct computation, we can obtain that
\beq\label{eq-Q}
Q^{-1}_n=\frac{2}{n+1}\begin{pmatrix}
	n&n-1&n-2&\cdots&3&2&1\\
	n-1&(n-1)2&(n-2)2&\cdots&3\cdot 2&2\cdot 2&2\\
	n-2&(n-2)2&(n-3)3&\cdots&3\cdot 3&3\cdot 2&3\\
	\vdots&\vdots&\vdots&&\vdots&\vdots&\vdots\\
	3&3\cdot 2&3\cdot 3&\cdots&(n-3)3&(n-3)2&n-2\\
	2&2\cdot 2&3\cdot 2&\cdots&(n-2)2&(n-1)2&n-1\\
	1&2&3&\cdots&n-2&n-1&n\\
\end{pmatrix}, 
\eeq
and
\beq\label{eq-Qin}
Q_n^{-1}=\frac{2}{n+1}\left(A_1^t A_1+A_2^t A_2+\cdots+A_{\frac{n(n+1)}{2}}^tA_{\frac{n(n+1)}{2}}\right).
\eeq
Moreover, for any $(a_1, a_2, \cdots, a_n)\in \mathbb{Z}^n\backslash 0$, we have
\beq\label{eq-simplicial}
(a_1, a_2, \cdots, a_n)Q_n(a_1, a_2, \cdots, a_n)^t=\sum_{i=1}^n a_i^2-\sum_{i=1}^{n-1} a_{i}a_{i+1}=\sum_{i=1}^{n-1} \frac{(a_{i}-a_{i+1})^2}{2}+\frac{a_1^2+a_n^2}{2}\geq 1,
\eeq
with the equality holds if and only if $(a_1, a_2, \cdots, a_n)$ comes from $X_n$.
\begin{proposition}\label{prop-symlicial1}
The matrix data set $\{X_n,\, Q_n,\, \frac{2}{n(n+1)}(1,1,\cdots,1)\}$
provides a $\lambda_1$-minimal flat torus in
$\mathbb{S}^{n(n+1)-1}$, which is linearly full and has volume $\frac{(2\sqrt 2)^n}{\sqrt{n+1}(\sqrt n)^n}\pi^n$.
\end{proposition}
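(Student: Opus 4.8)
The plan is to read everything off from the three displayed identities \eqref{eq-Q}, \eqref{eq-Qin} and \eqref{eq-simplicial} together with the construction in Theorem~\ref{thm-variation}, verifying in turn that the data define an isometric minimal immersion, that it is by first eigenfunctions, that it is linearly full, and that its volume is as claimed. First I would note that the right-hand side of \eqref{eq-simplicial}, namely $\sum_{i=1}^{n-1}\frac{(a_i-a_{i+1})^2}{2}+\frac{a_1^2+a_n^2}{2}$, is an algebraic identity valid for all real $a_i$ and is strictly positive unless $a=0$; hence $Q_n$ is positive definite and a genuine Gram matrix, defining a lattice $\Lambda_n^*$ and a flat torus $T^n=\mathbb{R}^n/\Lambda_n$.

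Next I would establish that the immersion is $\lambda_1$-minimal. Minimality is immediate from Theorem~\ref{thm-variation}: since $W_{X_n}=\{Q_n\}$ is a single point, $Q_n$ is trivially a critical point of $\det$ on $W_{X_n}$, and \eqref{eq-Qin} shows that $\frac{Q_n^{-1}}{n}=\frac{2}{n(n+1)}\sum_i A_i^tA_i$ lies in the interior of the convex hull $C_{X_n}$ (the coefficients $\frac{2}{n(n+1)}$ are positive and sum to $1$). Equivalently one checks \eqref{eq-AQA} directly from the equality case of \eqref{eq-simplicial} (each $A_j\in X_n$ gives $A_jQ_nA_j^t=1$) and \eqref{eq:flat} from \eqref{eq-Qin} with $c_i^2=\frac{2}{n(n+1)}$. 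To see that the immersion is by the \emph{first} eigenfunctions I would analyze the equality case of \eqref{eq-simplicial}: writing $a_0=a_{n+1}=0$, the form equals $\sum_{i=0}^{n}\frac{(a_{i+1}-a_i)^2}{2}$, which equals $1$ precisely when exactly two consecutive differences are $\pm1$ and the rest vanish, i.e. when $a$ is $\pm$ the indicator vector of an interval $[p,q]\subseteq[1,n]$. These indicator vectors are exactly the columns of $X_n^t$, so $\Xi=X_n$ up to sign, the shortest dual vectors have length $1$, and $N=\binom{n+1}{2}=\frac{n(n+1)}{2}$. With the metric normalized as in Section~\ref{sec-variation} the corresponding eigenvalue is $n$ and is the smallest positive one, so the immersion is $\lambda_1$-minimal into $\mathbb{S}^{2N-1}=\mathbb{S}^{n(n+1)-1}$.

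Finally, linear fullness follows because all weights $c_i^2=\frac{2}{n(n+1)}$ are strictly positive, so the $2N$ functions $c_j\cos\theta_j,\,c_j\sin\theta_j$ (with distinct nonzero $\xi_j$) are linearly independent and the image spans $\mathbb{R}^{2N}$. For the volume I would use the formula $\frac{2^n\pi^n}{\sqrt{n^n\det(Q_n)}}$ recorded after the definition of the Gram matrix, reducing the task to evaluating $\det(Q_n)$. The tridiagonal structure gives the recurrence $d_n=d_{n-1}-\frac14 d_{n-2}$ with $d_1=1,\ d_2=\frac34$; the characteristic equation $x^2-x+\frac14=0$ has the double root $\frac12$, whence $d_n=\frac{n+1}{2^n}$. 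Substituting $\det(Q_n)=\frac{n+1}{2^n}$ yields $\frac{2^n\pi^n}{\sqrt{n^n(n+1)/2^n}}=\frac{2^{3n/2}}{\sqrt{n+1}(\sqrt n)^n}\pi^n=\frac{(2\sqrt2)^n}{\sqrt{n+1}(\sqrt n)^n}\pi^n$, as stated. The only genuinely substantive step is the equality analysis of \eqref{eq-simplicial} that pins down $X_n$ as the \emph{entire} set of shortest vectors; everything else is a direct substitution into identities already available.
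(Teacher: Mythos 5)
Your proof is correct and takes essentially the same route as the paper, which states the proposition as a direct consequence of the verifications immediately preceding it: the data satisfy \eqref{eq-AQA} and \eqref{eq:flat} via \eqref{eq-Qin}, the shortest-vector property and the identification $\Xi=X_n$ come from the equality case of \eqref{eq-simplicial}, and the volume follows from $\det Q_n=\frac{n+1}{2^n}$; your write-up simply makes these omitted details explicit (the telescoping rewriting with $a_0=a_{n+1}=0$ and the tridiagonal recurrence). One cosmetic fix: ``exactly two consecutive differences are $\pm1$'' should read ``exactly two of the consecutive differences $a_{i+1}-a_i$ equal $\pm1$, necessarily one $+1$ and one $-1$ since they telescope to zero,'' which is what pins down $a$ as $\pm$ the indicator vector of an interval.
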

One can see that when we take $n=2, 3, 4$,
we obtain the equilateral torus, Example~\ref{ex:3-3}, and Example~\ref{ex:10} respectively.


Next, for a given $1\leq k\leq n$, we consider the integer set $X_{n,k}$ obtained by   removing the last $n-k$ rows from the ladder set $X_n$, i.e.,
\beq\label{eq-Xnk}
X^t_{n,k}=\begin{pmatrix}
	1 & & 1 & & & 1 &      & & & \\
	&1& 1 & &1& 1 &      & & & \\
	& &   &1&1& 1 &      & & & 1\\
	& &   & & &   &\ddots& & & \vdots\\
	& &   & & &   &      & &1\cdots&1\\
	& &   & & &   &      &1&1\cdots& 1\\
\end{pmatrix}_{n\times(\frac{n(n-1)}{2}+k)}.
\eeq
\begin{remark}
Note that when $k=n$, $X_{n,k}=X_n$, which can automatically determine a $\lambda_1$-minimal flat $n$-torus as discussed above. When $k=1$, 
$X_{n,k}$ is a block diagonal matrix, from which a reducible $\lambda_1$-minimal flat $n$-torus can be obtained.
\end{remark}
Next let us consider the case of $2\leq k\leq n-1$. Such kind of $X_{n,k}$ is called the {\em faulted ladder set}.
\begin{lemma} 
	Define
	\[
	Q_{n,k}\triangleq Q_n+\frac{1}{2k}(E_{n,n-k}+E_{n-k,n}), \quad
	\]
	where $E_{i,j}=e_i^t e_j$ and $e_i$ is the $i$-th row of $I_n$, 
	then $Q_{n,k}\in W_{X_{n,k}}$, and $$Q_{n,k}^{-1}=R_n+S_k-T_k,$$
	with
	\[
	R_n=\begin{pmatrix}
		Q^{-1}_{n-1}&\\&0
	\end{pmatrix},\quad S_k=\begin{pmatrix}
	O&\\&Q^{-1}_k
\end{pmatrix},\quad T_k=\begin{pmatrix}
O&&\\&Q^{-1}_{k-1}&\\&&0
\end{pmatrix}.
	\]
\end{lemma}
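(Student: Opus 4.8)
The goal is to prove two things about the matrix $Q_{n,k}=Q_n+\frac{1}{2k}(E_{n,n-k}+E_{n-k,n})$: first, that $Q_{n,k}\in W_{X_{n,k}}$, meaning every vector $A\in X_{n,k}$ satisfies $A Q_{n,k} A^t=1$; and second, that its inverse has the stated block form $Q_{n,k}^{-1}=R_n+S_k-T_k$. My plan is to handle the membership claim by a direct computation exploiting the fact that $Q_{n,k}$ differs from $Q_n$ only in the single symmetric pair of entries at positions $(n,n-k)$ and $(n-k,n)$, and then verify the inverse formula by directly checking $Q_{n,k}(R_n+S_k-T_k)=I_n$.

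First I would establish the membership $Q_{n,k}\in W_{X_{n,k}}$. For any $A\in X_{n,k}$ the quantity $A Q_{n,k} A^t = A Q_n A^t + \frac{1}{k}a_n a_{n-k}$, where $a_i$ denotes the $i$-th coordinate of $A$. The column structure of $X_{n,k}^t$ in \eqref{eq-Xnk} shows that every generating vector already satisfies $A Q_n A^t = 1$ by the same argument as in \eqref{eq-simplicial} (each such $A$ has adjacent-support unit form). So the only thing to check is that the correction term $\frac{1}{k}a_n a_{n-k}$ vanishes on all of $X_{n,k}$. Reading off \eqref{eq-Xnk}, the vectors with $a_n\neq 0$ are exactly those in the lower-right $k$-block, and one verifies directly that for each of them the $(n-k)$-th coordinate $a_{n-k}$ is zero (the blocks involving row $n-k$ and those involving row $n$ do not overlap in support among the generators). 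Hence $a_n a_{n-k}=0$ for every $A\in X_{n,k}$, giving $A Q_{n,k}A^t=1$ as required.

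The substantive part is verifying the inverse formula. I would compute $Q_{n,k}(R_n+S_k-T_k)$ and show it equals $I_n$. Here I would exploit the recursive identities satisfied by the tridiagonal matrices: since $Q_m$ is tridiagonal with $1$ on the diagonal and $-\tfrac12$ off-diagonal, one has the clean block relations $Q_n R_n$, $Q_n S_k$, $Q_n T_k$ expressible through $Q_{m}Q_m^{-1}=I_m$ on the respective blocks, with corrections localized at the boundary rows/columns where the blocks meet (rows $n-1,n$ and rows $n-k-1,n-k$). The three summands $R_n$, $S_k$, $T_k$ are designed so that $Q_n(R_n+S_k-T_k)$ equals $I_n$ up to a small discrepancy supported near index $n-k$ and $n$, and this discrepancy is exactly cancelled by the rank-two perturbation $\frac{1}{2k}(E_{n,n-k}+E_{n-k,n})$ acting on $R_n+S_k-T_k$. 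The key is to track the boundary entries carefully using the explicit form \eqref{eq-Q} of $Q_m^{-1}$, in particular the values of the first and last rows of $Q_{n-1}^{-1}$, $Q_k^{-1}$ and $Q_{k-1}^{-1}$.

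I expect the main obstacle to be the bookkeeping in the inverse verification: the three blocks $R_n,S_k,T_k$ overlap (the $Q_k^{-1}$ and $Q_{k-1}^{-1}$ blocks share indices, and $R_n$ carries a full $Q_{n-1}^{-1}$ block), so the product $Q_n(R_n+S_k-T_k)$ produces cross-terms concentrated at the interface index $n-k$ and at the final index $n$ that must be collated by hand. The cleanest route is probably to fix coordinates and compute the $(i,j)$-entry of $Q_{n,k}(R_n+S_k-T_k)$ by cases: for $i,j$ strictly inside one block the tridiagonal telescoping gives $\delta_{ij}$ immediately, and one only needs to examine the finitely many boundary rows $i\in\{n-k-1,n-k,n-k+1,n-1,n\}$ where the perturbation and the block seams interact. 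Verifying that all these boundary entries collapse to the identity — with the $\frac{1}{2k}$ factor in the perturbation precisely matching the entries of $Q_{n-1}^{-1}$ and $Q_k^{-1}$ at their extreme rows — is the real content, and is where I would concentrate the detailed (but elementary) calculation.
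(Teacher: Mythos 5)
Your proposal is correct and takes essentially the same approach as the paper: the paper likewise treats the membership claim $Q_{n,k}\in W_{X_{n,k}}$ as an easy direct check (your observation that $A Q_{n,k}A^t = AQ_nA^t + \frac{1}{k}a_na_{n-k}$ with $a_na_{n-k}=0$ on $X_{n,k}$, since runs ending at $n$ have length at most $k$, is exactly the reason), and it proves the inverse formula by directly computing the three products $Q_{n,k}R_n$, $Q_{n,k}S_k$, $Q_{n,k}T_k$ and checking that the residual terms, supported in rows $n$ and $n-k$ across columns $n-k+1,\dots,n-1$, cancel in the combination $R_n+S_k-T_k$. The only organizational difference is that the paper executes your boundary bookkeeping via the elementary-matrix calculus $E_{i,j}E_{k,l}=\delta_{j,k}E_{i,l}$ applied to explicit expansions of $R_n$, $S_k$, $T_k$ in the $E_{i,j}$ basis, rather than raw entry-by-entry case analysis, but this is the same verification.
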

\begin{proof}
The first conclusion is easy to be verified, we only prove the second one.

Write
$$Q_n^{-1}=\frac{2}{n+1}\left(\sum_{1\leq i<j\leq n}i(n+1-j)\left(E_{i,j}+E_{j,i}\right)+\sum_{1\leq i \leq n}i(n+1-i)E_{i,i}\right),$$
It follows 
that
\begin{align}
    &R_n=\frac{2}{n}\left(\sum_{1\leq i<j\leq n-1}i(n-j)\left(E_{i,j}+E_{j,i}\right)+\sum_{1\leq i \leq n-1}i(n-i)E_{i,i}\right),\label{eq-Rn}\\
    &S_k=\frac{2}{k+1}\!\!\left(\!\!\sum_{1\leq i<j\leq k}i(k+1-i)\left(E_{n-k+i,n-k+j}+E_{n-k+j,n-k+i}\right)+\!\!\!\sum_{1\leq i \leq k}i(k+1-i)E_{n-k+i,n-k+i}\right),\label{eq-Sk}\\
    &T_k=\frac{2}{k}\left(\sum_{1\leq i<j\leq k-1}i(k-i)\left(E_{n-k+i,n-k+j}+E_{n-k+j,n-k+i}\right)+\sum_{1\leq i \leq k-1}i(k-i)E_{n-k+i,n-k+i}\right). \label{eq-Tk}
\end{align}
Note that we can also express $Q_{n,k}$ as follows:
\begin{equation}
\begin{aligned}
	&Q_{n,k}= \begin{pmatrix}
		Q_{n-1}&\\&1
	\end{pmatrix}-\frac{1}{2}(E_{n,n-1}+E_{n-1,n})+\frac{1}{2k}(E_{n,n-k}+E_{n-k,n}), \\
	=&\begin{pmatrix}
		Q_{n-k}&\\&Q_k
	\end{pmatrix}-\frac{1}{2}(E_{n-k+1,n-k}+E_{n-k,n-k+1})+\frac{1}{2k}(E_{n,n-k}+E_{n-k,n})\\
	=&\!\!\begin{pmatrix}
		Q_{n-k}&&\\&Q_{k-1}&\\&&1
	\end{pmatrix}\!\!\!-\!\frac{1}{2}(E_{n,n-1}+E_{n-1,n})\!-\!\frac{1}{2}(E_{n-k+1,n-k}+E_{n-k,n-k+1})\!+\!\frac{1}{2k}(E_{n,n-k}+E_{n-k,n})\label{eq-Q3}.
\end{aligned}\end{equation}
Combining the fact that
$$E_{i,j}E_{k,l}=\delta_{j,k}E_{i,l},~~~1\leq i, j, k, l\leq n$$
with \eqref{eq-Rn} $\sim$ \eqref{eq-Q3}, we can obtain that
\begin{align*}
	&Q_{n,k}R_n= \begin{pmatrix}
		I_{n-1}&\\&0
	\end{pmatrix}+\sum_{n-k< i\leq n-1}\frac{n-k-i}{k}E_{n,i}, \\
	&Q_{n,k}S_k=\begin{pmatrix}
		0&\\&I_k
	\end{pmatrix}-\sum_{1\leq i\leq k}\frac{k-i}{k}E_{n-k,n-k+i},\\
    &Q_{n,k}T_k=\begin{pmatrix}
		0&&\\&I_{k-1}&\\&&0
	\end{pmatrix}-\sum_{1\leq i\leq k-1}\frac{i}{k}E_{n,n-k+i}-\sum_{1\leq i\leq k-1}\frac{k-i}{k}E_{n-k,n-k+i},
\end{align*}
from which the conclusion follows. 
\end{proof}
It follows from \eqref{eq-Qin} that
\[
R_n=\frac{2}{n}\left(A_1^tA_1+\cdots+A_{\frac{n(n-1)}{2}}^tA_{\frac{n(n-1)}{2}}\right)=\frac{2}{n}\sum_{j=1}^{n-1}\sum_{i=1}^j
A^t_{\frac{j(j+1)}{2}+i}A_{\frac{j(j+1)}{2}+i},\quad
\]\[
S_k=\frac{2}{k+1}\sum_{j=n-k}^{n-1}\sum_{i=1}^{j+1+k-n}
A^t_{\frac{j(j+1)}{2}+i}A_{\frac{j(j+1)}{2}+i},~~~
T_k=\frac{2}{k}\sum_{j=n-k}^{n-2}\sum_{i=1}^{j+1+k-n}
A^t_{\frac{j(j+1)}{2}+i}A_{\frac{j(j+1)}{2}+i},
\]
where $A_i^t$ is the $i$-th column vector of  $X^t_{n,k}$. 
Note that in $Q_{n,k}^{-1}$, the coefficient of  $A^t_{\frac{j(j+1)}{2}+i}A_{\frac{j(j+1)}{2}+i}$ is
\[
\frac{2}{n}+\frac{2}{k+1}-\frac{2}{k}=\frac{2(k^2+k-n)}{nk(k+1)}
\]
for $n-k\leq j\leq n-2$ and $1\leq i\leq j+1+k-n$; the other coefficients are $\frac{2}{n}>0$ or $\frac{2}{k+1}>0$.
\begin{proposition} Suppose $2\leq k\leq n-1$. 

(1) When $k^2+k>n$, the faulted ladder set $X^t_{n,k}$ given in \eqref{eq-Xnk}  can determine a linearly full and $\lambda_1$-minimal flat $n$-torus in $\mathbb{S}^{n(n-1)+2k-1}$. 

(2) When $k^2+k=n$, 
the faulted ladder set $X^t_{n,k}$ given in \eqref{eq-Xnk}  can determine a linearly full and $\lambda_1$-minimal flat torus in a sphere of dimension $n(n-1)-k^2+3k-1$, with ${k(k-1)}$ dimensional eigenfunctions redundant.  
\end{proposition}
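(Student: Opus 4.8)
The plan is to verify the two hypotheses of Theorem~\ref{thm-variation} for the pair $(X_{n,k},Q_{n,k})$ and then, separately, to check that the columns of $X_{n,k}^t$ are exactly the shortest vectors of the lattice whose dual Gram matrix is $Q_{n,k}$. The condition $Q_{n,k}\in W_{X_{n,k}}$ is already established in the lemma above, so the only thing left for \emph{minimality} is $\frac1n Q_{n,k}^{-1}\in C_{X_{n,k}}$, while $\lambda_1$-minimality is exactly the shortest-vector statement. These two ingredients, together with a counting of which vectors survive, will give both cases.

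For the convex-hull membership I would simply read off the coefficients from the decomposition $Q_{n,k}^{-1}=R_n+S_k-T_k$. After dividing by $n$, the weight attached to each $A_i^tA_i$ equals $\frac{2}{n^2}$ or $\frac{2}{n(k+1)}$, except on the $\frac{k(k-1)}{2}$ ``overlap'' vectors $e_p+\cdots+e_q$ with $n-k+1\le p\le q\le n-1$, where it equals $\frac{2(k^2+k-n)}{n^2k(k+1)}$; because $Q_{n,k}\in W_{X_{n,k}}$, pairing $\frac1n Q_{n,k}^{-1}$ with $Q_{n,k}$ shows these weights sum to $1$ automatically. Hence if $k^2+k>n$ all weights are strictly positive, so $\frac1n Q_{n,k}^{-1}\in\overset{\circ}{C_{X_{n,k}}}$ and Theorem~\ref{thm-variation} produces a linearly full minimal immersion into $\mathbb{S}^{2N-1}=\mathbb{S}^{n(n-1)+2k-1}$ with $N=\frac{n(n-1)}{2}+k$. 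If $k^2+k=n$ the overlap weights vanish, so $\frac1n Q_{n,k}^{-1}$ lies on the face spanned by the surviving vectors; the immersion then drops those $\frac{k(k-1)}{2}$ vectors, i.e. discards $k(k-1)$ of the first eigenfunctions, and is linearly full in the sphere of dimension $2\bigl(N-\frac{k(k-1)}{2}\bigr)-1=n(n-1)-k^2+3k-1$.

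The heart of the matter, and the step I expect to be the main obstacle, is to prove $vQ_{n,k}v^t\ge1$ for all $v=(a_1,\dots,a_n)\in\mathbb{Z}^n\setminus\{0\}$ with equality precisely on $\pm X_{n,k}$; this is what makes the immersion one by \emph{first} eigenfunctions. Writing $vQ_{n,k}v^t=vQ_nv^t+\frac1k a_na_{n-k}$ and recalling from \eqref{eq-simplicial} that $2vQ_nv^t=\sum_{i=1}^{n-1}(a_i-a_{i+1})^2+a_1^2+a_n^2\ge2$, with equality exactly on the consecutive-interval vectors, I would split on the sign of $a_na_{n-k}$. When $a_na_{n-k}\ge0$ one has $vQ_{n,k}v^t\ge vQ_nv^t\ge1$, and equality forces $v$ to be a consecutive interval with $a_na_{n-k}=0$, i.e. $v\in\pm X_{n,k}$. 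When $a_na_{n-k}<0$, so $|a_{n-k}|,|a_n|\ge1$, I would write $2vQ_{n,k}v^t=E_L+\bigl(E_R+\tfrac2k a_na_{n-k}\bigr)$ with $E_L=a_1^2+\sum_{i=1}^{n-k-1}(a_i-a_{i+1})^2$ and $E_R=\sum_{i=n-k}^{n-1}(a_i-a_{i+1})^2+a_n^2$. Minimizing the interior entries of $E_R$ by linear interpolation gives $E_R+\tfrac2k a_na_{n-k}\ge\tfrac1k a_{n-k}^2+\tfrac{k+1}{k}a_n^2$, while integrality of $a_1,\dots,a_{n-k}$ gives $E_L\ge|a_{n-k}|\ge1$ (each integer square dominates its absolute value, then telescope). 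Thus $2vQ_{n,k}v^t\ge1+\tfrac1k+\tfrac{k+1}{k}=2+\tfrac2k>2$, so the inequality is strict here and no extra unit vectors arise. The delicate point is precisely this mixing of an integer bound for $E_L$ with a real (completing-the-square) bound for $E_R$; using the real bound for both is too lossy and fails for $n$ close to $k^2+k$.

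Finally I would assemble the pieces. The shortest-vector analysis, which uses only $2\le k\le n-1$ and not the arithmetic condition, pins down $\pm X_{n,k}$ as the set of unit vectors of $\Lambda_n^*$, so the immersion built above is genuinely $\lambda_1$-minimal in both regimes. The hypothesis $k^2+k\ge n$ enters in exactly one place, namely to keep the overlap weights non-negative, which is the borderline between an honest minimal immersion into a sphere and a degenerate configuration; the strict case $k^2+k>n$ gives the linearly full immersion of conclusion~(1), and the equality case $k^2+k=n$ gives the non-full immersion of conclusion~(2) with the stated $k(k-1)$ redundant eigenfunctions and sphere dimension $n(n-1)-k^2+3k-1$.
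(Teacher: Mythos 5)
Your proof is correct, and while the first half coincides with the paper's, the crucial lattice step is carried out by a genuinely different argument. For the minimality part you and the paper do the same thing: read the weights off the decomposition $Q_{n,k}^{-1}=R_n+S_k-T_k$ from the preceding lemma, observe that the $\frac{k(k-1)}{2}$ overlap vectors $e_p+\cdots+e_q$ ($n-k+1\le p\le q\le n-1$) carry coefficient proportional to $k^2+k-n$, and conclude via \eqref{eq-AQA} and \eqref{eq:flat} (equivalently Theorem~\ref{thm-variation}); your normalized weights $\frac{2}{n^2}$, $\frac{2}{n(k+1)}$, $\frac{2(k^2+k-n)}{n^2k(k+1)}$ are in fact the correct ones — the paper's displayed $c_i^2$ omit the factor $\frac1n$ (compare Example~\ref{ex:3-2}, where $(n,k)=(3,2)$ gives $(\frac29,\dots,\frac19,\dots)$), so your version silently repairs a typo. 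Where you diverge is the shortest-vector verification. The paper first reduces to $|a_n|\le 1$ by computing the distance $d_n^2=\frac{k+1}{2k}>\frac14$ from $\xi_n$ to the hyperplane of the sublattice (invoking Theorem~\ref{thm:general} and the known ladder sublattice $Q_{n-1}$), and then, for $a_n=1$, splits three ways on the sign of $a_{n-k}$, using Cauchy--Schwarz only in the case $a_{n-k}<0$. You instead make a single split on the sign of $a_na_{n-k}$: the non-negative case follows by monotone comparison with \eqref{eq-simplicial} (and cleanly pins down the equality set as exactly $\pm X_{n,k}$, since the removed intervals are precisely those containing both positions $n-k$ and $n$), while the negative case combines the Cauchy--Schwarz relaxation $E_R+\frac2k a_na_{n-k}\ge\frac1k a_{n-k}^2+\frac{k+1}{k}a_n^2$ with the integrality telescoping bound $E_L\ge|a_{n-k}|$, yielding the uniform strict bound $2vQ_{n,k}v^t\ge 2+\frac2k$. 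This avoids Theorem~\ref{thm:general} and the sublattice reduction altogether, and your observation that a purely real bound on $E_L$ would be too lossy is accurate: the mixed integer/real estimate is exactly what makes the one-split argument close. What the paper's route buys is reusability of its general finiteness criterion; what yours buys is a shorter, self-contained case analysis with a sharper, uniform margin in the strict case. Your treatment of the borderline $k^2+k=n$ — zero weights landing $\frac1n Q_{n,k}^{-1}$ on a face of $C_{X_{n,k}}$, dropping $k(k-1)$ eigenfunctions and giving fullness in dimension $n(n-1)-k^2+3k-1$ — is also slightly more explicit than the paper's, which handles both regimes with one formula for the coefficients.
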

\begin{proof}
Assume $\Lambda_n^*$ is the lattice determined by $Q_{n,k}~(1\leq k\leq n)$, and the corresponding generator is $\{\xi_1,\cdots,\xi_n\}$. Denote by $\Lambda_n$ the dual lattice. Define $\left(c_1^2, c_2^2, \cdots\cdots, c^2_{\frac{n(n+1)}{2}+k}\right)$ as follows:
\[c^2_{\frac{j(j+1)}{2}+i}=\begin{cases}
\dfrac{2(k^2+k-n)}{nk(k+1)},&n-k\leq j\leq n-2 \text{ and } 1\leq i\leq j+1+k-n;\\
\dfrac{2}{n},&j<n-k \text{ and } 1\leq i\leq j;\\
\dfrac{2}{k+1},&\text{others}.
\end{cases}
\]
Since the matrix data set $\Big\{X_{n,k}, Q_{n,k}, \big(c_1^2, c_2^2, \cdots\cdots, c^2_{\frac{n(n+1)}{2}+k}\big)\Big\}$ satisfy \eqref{eq-AQA} and \eqref{eq:flat}, they can determine an isometric minimal immersion of $T^n=\mathbb{R}^n/\Lambda_n$ in $\mathbb{S}^{n(n-1)+2k-1}$.

We are left to show $1$ is the shortest length in $\Lambda_n^*$, and  all the lattice vectors of this length having coordinates vectors as given in $X_{n,k}$ up to the direction.

Consider the sublattice generated by  $\{\xi_1,\cdots,\xi_{n-1}\}$. It is obvious that it takes $Q_{n-1}$ as the Gram matrix. Using \eqref{eq-simplicial}, we know $1$ is the shortest length in this sublattice. Next, we will use Theorem~\ref{thm:general} to show this is also true for $\Lambda_n^*$.


Firstly, we calculate the distance from $\xi_n$ to the hyperplane $\mathrm{Span}_{\mathbb{R}}\{\xi_1,\cdots,\xi_{n-1}\}$. Define $v\triangleq(v_1,\cdots,v_{n-1})$, where $v_{n-k}=1/(2k)$, $v_{n-1}=-1/2$ and the others are $0$. By \eqref{eq:dn} we get
\[
d_n^2=1-vQ_{n-1}^{-1}v^t=1-\frac{k-1}{2k}=\frac{k+1}{2k}>\frac{1}{4},
\]
which implies for any vector  $\xi=a_1\xi_1+\cdots+a_n\xi_n\in\Lambda_n^*$, if $|a_n|> 1$ then $\|\xi\|> 1$. When $a_n=0$, as a lattice vector in the sublattice, it is obvious that $\|\xi\|\geq1$, with equality holds if and only if $\pm\xi$ belongs to  $X_{n-1}$, which can be embedded into $X_{n,k}$. So we only need to discuss the case of $a_n=\pm1$. By considering $-\xi$ if necessary, we can assume $a_n=1$. Then we have
\begin{align*}
|\xi|^2-1&=a_1^2+\cdots+a_{n-1}^2-a_1a_2-\cdots-a_{n-2}a_{n-1}-a_{n-1}+\frac{1}{k}a_{n-k}\\
&=\sum_{i=1}^{n-1}\frac{1}{2}(a_i-a_{i+1})^2+\frac{1}{2}\left(a_1^2-1\right)+\frac{1}{k}a_{n-k},
\end{align*}
which is larger than 0 when $a_{n-k}>0$.
For the case of $a_{n-k}=0$, it is obvious that $|\xi|^2-1\geq 0$, and the equality holds if and only if 
$a_1=\cdots=a_{p-1}=0$ and
$a_p=\cdots=a_n=1$ for some $p>n-k$,
which exactly corresponds to a certain row of $X_{n,k}$. When $a_{n-k}<0$, using the Cauchy inequality, we have
$$\sum_{i=n-k}^{n-1}(a_i-a_{i+1})^2\geq\frac{1}{k}\left(\sum_{i=n-k}^{n-1}(a_i-a_{i+1})\right)^2=\frac{1}{k}\left(1-a_{n-k}\right)^2>-\frac{2}{k}a_{n-k},$$
from which it follows that $|\xi|^2-1> 0$, and we finish the proof.
\end{proof}
\begin{remark}
The $\lambda_1$-minimal flat torus in this section has volume $\frac{\sqrt{k}(2\sqrt 2)^n}{\sqrt{k+1}(\sqrt n)^{n+1}}\pi^n$ $(1\leq k\leq n)$. It comes from the fact that $\det Q_{n,k}=\frac{n(k+1)}{2^nk}$ which again indicates $Q_{n,k}$ is positive-definite. One can easily check that Example~\ref{ex:3-2}, \ref{ex:8-1} and \ref{ex:9-1} exactly correspond to the case $(n,k)=(3,2)$, $(4,2)$ and $(4,3)$ respectively.

\end{remark}

\section{Berger's problem on conformally flat $3$-tori and $4$-tori}\label{sec-Berge}
As recalled in the introduction, on $n$-tori ($n\geq 3$), there is no solution to the Berger's problem, i.e., one can not expect a uniform upper bound for $\mathcal{L}(g)$ among all smooth Riemannian metrics. However, if 
restricted to the flat metric, or a certain class of some conformally flat metrics, we can solve the Berger's problem for $n\leq 4$. That is, we will prove Theorem~\ref{thm1}  and Theorem~\ref{thm2} given in the introduction.

Note that finding the upper bound for $\lambda_1(g)V(g)^{\frac{2}{n}}$ among all flat $n$-tori, is equivalent to find the upper bound for volumes of all flat $n$-tori with $4\pi^2$ as the first eigenvalue, which can be done by calculating the minimum 
of the determinant of those lattices (of rank $n$) with $1$ as the shortest length. Since a shortest lattice vector 
can always be extended to be the first vector of a generator, in terms of the Gram matrix, we only need to calculate the minimum  of $\det$ on the following set: 
$$\Omega_1\triangleq \big\{Q\in \Sigma_{+}\, \big{|}\, Q_{11}=1, vQv^t\geq 1~{\rm for~all}~v\in\mathbb{Z}^n\backslash \{0\}\big\},$$
where $Q_{ij}$ is the entry of $Q$. 
We still use the notation $\Xi$ to denote the set of shortest lattice vectors 
up to $\pm 1$ in a given lattice $\Lambda_n^*$.  
\begin{lemma}\label{lem-mini}
For the lattice $\Lambda_n^*$ determined by $Q\in \Omega_1$, if ${\rm rank}(\Xi)<n$, then $Q$ is not a minimal point of $\det$ on $\Omega_1$.
\end{lemma}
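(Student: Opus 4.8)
The plan is to exhibit an explicit one-parameter deformation of the lattice that strictly decreases $\det$ while staying inside $\Omega_1$, thereby showing $Q$ cannot be a minimal point. The geometric intuition is that the active constraints at $Q$ are exactly the equalities $\xi_i Q \xi_i^t = 1$ coming from the shortest vectors $\Xi$; their gradients in $S(n)$ are the matrices $A_i^t A_i$, whose common range lies in $\mathrm{Span}_\mathbb{R}(\Xi)$. If $\mathrm{rank}(\Xi)<n$ this range is a proper subspace, so no nonnegative (indeed no real) combination of the $A_i^t A_i$ can equal the full-rank matrix $Q^{-1}$. Hence the first-order (KKT) condition for a minimum of $\det$ on $\Omega_1$ fails, and there must be a feasible descent direction. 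Rather than invoke this abstractly, I would realize the descent direction as a concrete contraction of the lattice.

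Concretely, set $V\triangleq \mathrm{Span}_\mathbb{R}(\Xi)$ with $\dim V=k<n$, and choose a unit vector $w$ in the orthogonal complement $V^\perp$ (nonempty since $k<n$). For $t>0$ define the linear map $\Phi_t\triangleq I-(1-e^{-t})\,w\,w^t$, which fixes $V$ pointwise and scales the $w$–direction by $e^{-t}$. Transforming a generator $\{\eta_1,\dots,\eta_n\}$ of $\Lambda_n^*$ by $\Phi_t$ produces a new Gram matrix $Q_t$ with $\det Q_t=(\det\Phi_t)^2\det Q=e^{-2t}\det Q<\det Q$. The condition $(Q_t)_{11}=1$ is automatic: since $Q_{11}=1$ forces $\eta_1$ to be a shortest vector, we have $\eta_1\in\Xi\subseteq V$, so $\Phi_t\eta_1=\eta_1$ and $|\Phi_t\eta_1|^2=1$. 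Positive definiteness is clear as $\Phi_t$ is invertible. The only thing left to guarantee is that $Q_t\in\Omega_1$, i.e.\ that no nonzero lattice vector becomes shorter than $1$.

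This feasibility verification is the main obstacle, because $\Omega_1$ is cut out by infinitely many inequalities. The tool is the identity, valid for every $\xi\in\Lambda_n^*$,
\[
|\Phi_t\xi|^2=|\xi|^2-(1-e^{-2t})\,\langle\xi,w\rangle^2 .
\]
Every shortest vector lies in $V$ and so is orthogonal to $w$, leaving its length unchanged at $1$. Any lattice vector $\xi\notin V$ satisfies $|\xi|>1$ strictly, since a length-$1$ lattice vector would belong to $\Xi\subset V$. To turn this into a uniform bound I would split the remaining vectors: for $|\xi|^2>2$ the crude estimate $|\Phi_t\xi|^2\geq e^{-2t}|\xi|^2>2e^{-2t}$ exceeds $1$ once $t\leq\tfrac12\ln 2$; while among the finitely many lattice points with $|\xi|^2\leq 2$, those outside $V$ have a minimal squared length $m>1$, so $|\Phi_t\xi|^2\geq m-2(1-e^{-2t})>1$ for all sufficiently small $t$. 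Choosing $t>0$ below both thresholds gives $vQ_tv^t=|\Phi_t\xi_v|^2\geq 1$ for all $v\in\mathbb{Z}^n\setminus\{0\}$.

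With such a $t$ fixed, $Q_t\in\Omega_1$ and $\det Q_t<\det Q$, so $Q$ is not a minimal point of $\det$ on $\Omega_1$, completing the proof. I expect the only genuinely delicate point to be the uniform-gap estimate in the last paragraph; everything else is a direct computation with the explicit contraction $\Phi_t$, and the reliance on finiteness of lattice points in a bounded ball keeps the infinitely many constraints under control.
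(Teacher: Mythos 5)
Your proposal is correct and takes essentially the same approach as the paper: both deform the lattice by a one-parameter contraction of a direction orthogonal to $\mathrm{Span}_{\mathbb{R}}(\Xi)$ (the paper shrinks the component $\alpha^\perp$ of the last generator orthogonal to a rank-$(n-1)$ sublattice containing $\Xi$, which is the same map as your $\Phi_t$ applied to the generator), fixing all shortest vectors, multiplying $\det Q$ by a factor strictly less than $1$, and remaining in $\Omega_1$ for small parameter. The only difference is in verifying feasibility: the paper argues softly, slicing the lattice into hyperplane layers $\Pi_{k,t}$ of which only finitely many ($|k|\leq K$) meet $\overline{B(0,1)}$ and invoking continuity layer by layer, whereas you use the exact identity $|\Phi_t\xi|^2=|\xi|^2-(1-e^{-2t})\langle\xi,w\rangle^2$, the operator bound $|\Phi_t\xi|\geq e^{-t}|\xi|$ for long vectors, and the uniform gap $m>1$ on the finitely many lattice points with $|\xi|^2\leq 2$ outside $V$ — a more quantitative bookkeeping that even yields an explicit admissible $t$, but the underlying mechanism is identical.
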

\begin{proof}
We assume $\Xi$ is contained in the $(n-1)$-dimensional sublattice $\Lambda_{n-1}$ with a generator $\{\xi_1,\cdots,\xi_{n-1}\}$ and $\alpha$ is a vector in $\Lambda_n^*\backslash \Lambda_{n-1}$ such that $\{\xi_1,\cdots,\xi_{n-1},\alpha\}$ is a generator of $\Lambda_{n}^*$. 

Set $\alpha=\alpha^\top+\alpha^\perp$, in which $\alpha^\top$ is the orthogonal projection of $\alpha$ onto $\mathrm{Span}\{\xi_1,\cdots,\xi_{n-1}\}$. Let ${\Lambda}^*_{n,t}$ be a new lattice generated by $\{\xi_1,\cdots,\xi_{n-1},\alpha-t\alpha^\perp\}$ for $0\leq t<1$, $\Pi_{k,t}$ be the affine hyperplane defined by $\sum_{i=1}^{n-1}c_i\xi_i+k(\alpha-t\alpha^\perp)$ $(c_i\in \mathbb{R})$ for any $k\in \mathbb{Z}$.  

Note that there exists an integer $K>0$ such that $\Pi_{k,0}\cap \overline{B(0,1)}=\emptyset$ if and only if $|k|>K$. 
By continuity, there is $0<t_0<1$ such that for any $0\leq t\leq t_0$, 
$\Pi_{k,t}\cap \overline{B(0,1)}=\emptyset$ if and only if $|k|>K$. 

As for $0<|k|\leq K$, it is easy to see that for any vector of $\Lambda_{n,t}^*\cap\Pi_{k,t}$, its orthogonal projection onto $\mathrm{Span}\{\xi_1,\cdots,\xi_{n-1}\}$ does not depend on 
$t$, while $\Pi_{k,t}\cap \overline{B(0,1)}$ is a family of co-centered $(n-1)$-dimensional closed balls expanding as $t$ goes from $0$ to $t_0$. Given the fact that all the vectors of $\Lambda_{n,0}^*\cap\Pi_{k,0}$ are located outside of $\Pi_{k,0}\cap \overline{B(0,1)}$, there exists $t_k>0$ such that all the vectors of $\Lambda_{n,t}^*\cap\Pi_{k,t}$ are still located outside of $\Pi_{k,t}\cap \overline{B(0,1)}$ for any $0\leq t\leq t_k$. 

Set $\delta=\min\{t_0,\cdots,t_K\}$. 
Then
the lattice $\Lambda^*_{n,\delta}$ admits the same shortest vectors as $\Lambda_n^*$ while the Gram matrix $Q_\delta$ satisfying $\det Q_\delta=(\det(\xi_1,\cdots,\xi_{n-1},\alpha-\delta\alpha^\perp))^2=(1-\delta)^2\det Q$.
\end{proof}
Due to Theorem \ref{thm-gen} and \eqref{eq-excep1}, for a lattice $\Lambda_n^*$ with ${\rm rank}(\Xi)=n\leq 4$,  we can always choose a generator of $\Lambda_n^*$ such that the Gram matrix  takes $1$ as its diagonal entries. 
Set 
\begin{equation}\label{eq-vQvt}
\Omega_2\triangleq \{Q\in \Sigma_{+}\, \big{|}\, Q_{ii}=1\text{ for all $1\leq i\leq n$, and $vQv^t\geq 1$ for all $v\in\mathbb{Z}^n\backslash \{0\}$}\}.
\end{equation}
Then according to Lemma~\ref{lem-mini}, we have the minimum  of $\det$ on $\Omega_1$ can only be attained on $\Omega_2$.  
\begin{lemma}\label{lem-convex}
$\Omega_2$ is convex and compact. 
\end{lemma}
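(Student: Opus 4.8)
The plan is to read off convexity directly from the defining conditions of $\Omega_2$, and to obtain compactness by proving boundedness (from the unit diagonal plus positive definiteness) and closedness (the delicate point being that the limit must stay positive definite, which I will secure with a uniform lower bound on the determinant via Minkowski's first theorem).

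For convexity, I would note that each condition in the definition of $\Omega_2$ cuts out a convex subset of $S(n)$. The set $\Sigma_+$ is a convex cone, being the interior of the convex cone $\Sigma=\cap_{v\in\mathbb{R}^n}S^+_0(v)$. The equalities $Q_{ii}=1$ $(1\leq i\leq n)$ define an affine subspace, hence a convex set. Finally, for each fixed $v\in\mathbb{Z}^n\setminus\{0\}$ the inequality $vQv^t\geq 1$ rewrites as $\langle v^tv,Q\rangle\geq 1$, i.e. $Q\in S^+_1(v)$, a closed half-space. Since $\Omega_2$ is the intersection of these convex sets, it is convex.

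For boundedness I would use that every $Q\in\Omega_2$ is positive definite with $Q_{ii}=1$. Positivity of the $2\times 2$ principal minor in rows and columns $i,j$ gives $Q_{ij}^2<Q_{ii}Q_{jj}=1$, so $|Q_{ij}|\leq 1$ for all $i,j$; thus $\Omega_2$ is bounded in the finite-dimensional space $S(n)$. The essential work, and the main obstacle, is closedness: I must show that a limit $Q=\lim_m Q_m$ of matrices $Q_m\in\Omega_2$ cannot degenerate from positive definite to merely positive semidefinite. The conditions $Q_{ii}=1$ and $vQv^t\geq 1$ (for each fixed $v$) are preserved under limits, and $Q\in\Sigma$ since $\Sigma$ is closed, so everything reduces to ruling out a nontrivial kernel for the limit form.

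To this end I would invoke Minkowski's first theorem. Each $Q_m$ is the Gram matrix of a rank-$n$ lattice whose nonzero vectors all have squared length at least $1$ (from $Q_{ii}=1$ together with $vQ_mv^t\geq 1$), so the open unit ball contains no nonzero lattice point; hence its volume $V_n$ satisfies $V_n\leq 2^n\sqrt{\det Q_m}$. This gives the uniform lower bound $\det Q_m\geq (V_n/2^n)^2>0$, and by continuity of the determinant $\det Q=\lim_m\det Q_m>0$. A positive semidefinite matrix with positive determinant is positive definite, so $Q\in\Sigma_+$, and therefore $Q\in\Omega_2$. Being closed and bounded in $S(n)$, $\Omega_2$ is compact.
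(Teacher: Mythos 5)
Your proof is correct and follows essentially the same route as the paper's: convexity as an intersection of convex sets, boundedness from the unit diagonal together with positive (semi)definiteness, and closedness secured by a uniform lower bound on determinants obtained from Minkowski's first theorem (the paper cites this as Theorem~13 in \cite{Siegel}). The only difference is cosmetic: the paper applies the Minkowski bound to every leading principal minor $D_{nk}$ and concludes positive definiteness of the limit from the positivity of all of them, whereas you apply the bound once to the full determinant and close with the elementary observation that a positive semidefinite matrix with $\det Q>0$ is positive definite --- a slight streamlining of the same argument.
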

\begin{proof}
The convexity of $\Omega_2$ is obvious. To prove the compactness, we only need to prove that $\Omega_2$ is both bounded and closed. 

Suppose $Q\in \Omega_2$. It follows from $\langle Q,Q\rangle=\mathrm{tr}Q^2<(\mathrm{tr}Q)^2=n^2$ that $\Omega_2$ is bounded. 

Next, we prove that $\Omega_2$ is closed. Suppose $\{Q_n\}$ is a convergent sequence in $\Omega_2$, whose limit is denoted by $Q_0$. We denote by $D_{nk}$ (resp. $D_{0k}$) the leading principal minor of order $k$ for $Q_n$ (resp. $Q_0$). Note that for all $1\leq k\leq n$, the submatrix corresponding to $D_{nk}$ is also positive definite. It can determine a sublattice of rank $k$, which also takes $1$ as the shortest length. 
It follows from the Theorem~$13$ in \cite{Siegel} (a corollary of the Minkowski's first theorem) that 
$$\sqrt{D_{nk}}\geq\frac{V(\mathbb{S}^k)}{2^k},$$ 
where $V(\mathbb{S}^k)$ is the volume of the standard round $k$-sphere.  
By continuity, we have  $\sqrt{D_{0k}}\geq\frac{V(\mathbb{S}^k)}{2^k}>0$ for all $1\leq k\leq n$, which implies $Q_0\in \Sigma_{+}$. In the mean time, it is obvious that the diagonal entries of $Q_0$ are all $1$ and $v Q_0 v^t\geq 1$ for all $v\in \mathbb{Z}^n\backslash \{0\}$. Therefore $Q_0\in \Omega_2$, and $\Omega_2$ is closed.  
\end{proof}

\begin{lemma}
$\Omega_2$ is a 
convex polytope.
\end{lemma}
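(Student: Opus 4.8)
The plan is to exhibit $\Omega_2$ as the intersection of \emph{finitely many} affine hyperplanes and half-spaces; since a bounded (hence, by Lemma~\ref{lem-convex}, compact) intersection of this kind is precisely a convex polytope, this suffices. Each defining inequality $vQv^t\geq 1$ is the affine half-space $H_v\triangleq\{Q\in S(n)\mid \langle v^tv,Q\rangle\geq 1\}$, and each normalization $Q_{ii}=1$ is an affine hyperplane; the only features preventing $\Omega_2$ from being manifestly polyhedral are that the family $\{H_v\}_{v\in\mathbb{Z}^n\setminus\{0\}}$ is infinite, and that the membership $Q\in\Sigma_+$ is a non-polyhedral (spectrahedral) cone condition. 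So I must dispose of both.

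First I would reduce to finitely many inequalities. By Lemma~\ref{lem-convex}, $\Omega_2$ is compact and, being contained in $\Sigma_+$, consists of positive definite matrices; hence $\lambda_0\triangleq\min_{Q\in\Omega_2}\lambda_{\min}(Q)$ is attained and positive. For every $v\in\mathbb{Z}^n\setminus\{0\}$ and $Q\in\Omega_2$ one has $vQv^t\geq\lambda_0|v|^2$, so whenever $|v|^2>4/\lambda_0$ the constraint $H_v$ is satisfied \emph{strictly} and uniformly throughout $\Omega_2$. Thus the finite set $F\triangleq\{v\in\mathbb{Z}^n\setminus\{0\}\mid |v|^2\leq 4/\lambda_0\}$ carries all constraints that can touch $\Omega_2$; this is a uniform, compactness-driven version of the finite reduction already recorded in Theorem~\ref{thm:general}. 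Setting $P\triangleq\{Q\in S(n)\mid Q_{ii}=1\ (1\leq i\leq n),\ vQv^t\geq 1\ \forall v\in F\}$, which is a polyhedron containing $\Omega_2$, it remains only to prove $P\subseteq\Omega_2$.

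The heart of the argument, and the step I expect to be the main obstacle, is showing that passing to the finite constraint set and discarding the cone condition $\Sigma_+$ does not enlarge the set. I would argue by convexity together with the observation that $\Sigma_+$ is \emph{inactive} on $\Omega_2$: since $\Omega_2\subset\Sigma_+$ lies in the interior of the positive semidefinite cone, no point of $\Omega_2$ meets $\partial\Sigma$. Concretely, suppose some $Q_1\in P\setminus\Omega_2$; pick $Q_*\in\Omega_2$ and let $Q_2$ be the last point of the segment $\gamma(s)=Q_*+s(Q_1-Q_*)$ lying in $\Omega_2$, with parameter $s_0<1$ (this exists, and $\gamma(s)\notin\Omega_2$ for all $s>s_0$, because $\Omega_2$ is closed and convex). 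At $Q_2\in\Omega_2$ the membership $Q_2\in\Sigma_+$ is strict and every excluded constraint $v\notin F$ satisfies $vQ_2v^t>1$; choosing a neighborhood $U$ of $Q_2$ inside the slice $\{Q_{ii}=1\}$ so small that $\lambda_{\min}(Q)>\lambda_0/2$ on it, the bounds $vQv^t>(\lambda_0/2)|v|^2>1$ for all $v\notin F$ and the membership $Q\in\Sigma_+$ persist throughout $U$, so on $U$ membership in $\Omega_2$ is governed \emph{solely} by the inequalities indexed by $F$. But for $s$ slightly above $s_0$ the points $\gamma(s)$ lie in $U$ and, lying on the segment between $Q_2,Q_1\in P$, satisfy every $F$-inequality by convexity of the $H_v$; hence $\gamma(s)\in\Omega_2$, contradicting the maximality of $s_0$. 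Therefore $P=\Omega_2$, and being a compact polyhedron it is a convex polytope.
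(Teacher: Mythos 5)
Your proof is correct, and it takes a genuinely different route from the paper's. The paper gets finiteness for free from Theorem~\ref{thm:special}: for $n\le 6$ (in particular the relevant cases $n\le 4$) the constraints $vQv^t\ge 1$, $v\in\mathbb{Z}^n\setminus\{0\}$, reduce on positive definite matrices to the explicit finite family indexed by $\{0,\pm 1\}^n$; it then defines $\Omega_3$ by these finitely many inequalities alone and absorbs the cone condition by a connectedness argument --- $\Omega_2=\Omega_3\cap\Sigma_+$ is open in $\Omega_3$ (as $\Sigma_+$ is open in $S(n)$), closed in $\Omega_3$ (by the compactness of Lemma~\ref{lem-convex}), and nonempty, while $\Omega_3$ is convex, hence connected, so $\Omega_2=\Omega_3$. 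You replace both inputs: your finiteness comes non-constructively from compactness via the uniform eigenvalue bound $\lambda_0>0$ (note this uniformization over all of $\Omega_2$ is genuinely more than the per-matrix reduction of Theorem~\ref{thm:general}, which by itself would not give one finite set $F$ serving every $Q$ simultaneously), and your segment-continuation argument is a hands-on, one-dimensional version of the same open--closed principle, exploiting that the discarded constraints --- membership in $\Sigma_+$ and $vQv^t\ge 1$ for $v\notin F$ --- are strictly satisfied, hence locally inert, at every point of $\Omega_2$; the margin bookkeeping ($|v|^2>4/\lambda_0$ against $\lambda_{\min}>\lambda_0/2$, leaving $vQv^t>2$ on $U$) checks out. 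The trade-off: your argument is self-contained and valid in every dimension $n$, with no appeal to Ryshkov's reduction theory, whereas the paper's route yields an explicit list of defining half-spaces; losing that explicitness is harmless here, since the later vertex-counting in Lemma~\ref{rk-sharpxi} only needs that active constraints at a vertex have the form $vQv^t=1$ with $v$ an integer vector, which your description by $F$ provides equally well. Two small points you leave implicit and should state: $\Omega_2\neq\emptyset$ because $I_n\in\Omega_2$ (needed both to define $\lambda_0$ and to pick $Q_*$), and the final step that a bounded polyhedron is a convex polytope is the standard Minkowski--Weyl fact.
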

\begin{proof}
We denote by $\{0,\pm 1\}^n$ the set of integer vectors whose coordinates take values only in $\{0,\pm 1\}$. 
Define 
\beq \label{eq-qvert}
\begin{split}
\Omega_3\triangleq
\bigg\{Q\in S(n)\, \bigg{|}\, Q_{ii}=1 \text{ for all $1\leq i\leq n$,}\hspace{3.3cm}\\
\text{and $vQv^t\geq 1$ for all nonzero $v\in\{ 0,\pm1\}^n$}\bigg\}.
\end{split}
\eeq
It follows from Theorem~\ref{thm:special} that 
$\Omega_2=\Omega_3\cap \Sigma_+$. 

We claim that $\Omega_2=\Omega_3$. 
For this, let us consider the topology on $\Omega_3$ induced from the ambient space $S(n)$. It is easy to show that $\Omega_3$ is convex, which implies $\Omega_3$ is connected. 
Seeing $\Omega_2$ as a subset in $\Omega_3$, we can firstly derive that it is closed by Lemma~\ref{lem-convex}. 
On the other hand, note that $\Sigma_{+}$ is open in $S(n)$, it follows from $\Omega_2=\Omega_3\cap \Sigma_+$ that $\Omega_2$ is also open in $\Omega_3$. Then the claim follows from the fact that $\Omega_2\neq \emptyset$.  

It is not hard to see that the constraints $vQv^t\geq 1$ for all nonzero $v\in\{ 0,\pm1\}^n$ define a 
polyhedron in $S(n)$ by the intersection of finite half-spaces, and $\Omega_3$ is one of its facets. 
Combining this and Lemma~\ref{lem-convex}, we derive that  $\Omega_2=\Omega_3$ is a 
convex polytope. 
\end{proof}
According to the concavity of $\ln\circ\det$ on $\Sigma_{+}$ introduced in Lemma~\ref{lem-det}, one can easily obtain the next conclusion. 
\begin{lemma}\label{lem-minimal}
The minimum  of $\det$ on $\Omega_2$ 
is attained at some vertex of $\Omega_2$. 
\end{lemma}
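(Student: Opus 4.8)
The plan is to reduce the minimization of $\det$ to that of the concave function $f\triangleq\ln\circ\det$, and then exploit the fact that a concave function on a compact convex polytope attains its minimum at an extreme point.

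First I would observe that every $Q\in\Omega_2$ is positive definite (recall $\Omega_2=\Omega_3\cap\Sigma_+$, and indeed $\Omega_2=\Omega_3$), so $f=\ln\circ\det$ is well-defined, finite and continuous on $\Omega_2$. Since $t\mapsto\ln t$ is strictly increasing on $(0,\infty)$, a point $Q\in\Omega_2$ minimizes $\det$ over $\Omega_2$ if and only if it minimizes $f$ over $\Omega_2$; hence it suffices to locate the minimum of $f$. The existence of a minimizer is guaranteed by the compactness of $\Omega_2$ established in Lemma~\ref{lem-convex} together with the continuity of $f$.

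Next I would invoke the structure of $\Omega_2$ as a convex polytope proved in the preceding lemma: it is the convex hull of its finite set of vertices $V_1,\dots,V_m$. Any $Q\in\Omega_2$ therefore admits a representation $Q=\sum_{j=1}^m t_j V_j$ with $t_j\geq 0$ and $\sum_{j=1}^m t_j=1$. By the concavity of $f$ on $\Sigma_+$ (a consequence of Lemma~\ref{lem-det}) we obtain
\[
f(Q)=f\Big(\sum_{j=1}^m t_j V_j\Big)\geq \sum_{j=1}^m t_j f(V_j)\geq \Big(\sum_{j=1}^m t_j\Big)\min_{1\leq j\leq m} f(V_j)=\min_{1\leq j\leq m} f(V_j).
\]
Thus $f(Q)\geq \min_j f(V_j)$ for every $Q\in\Omega_2$, which shows that the minimum of $f$, and hence of $\det$, is attained at one of the vertices $V_j$.

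The argument is essentially routine once the convex-polytope structure of $\Omega_2$ is in hand; the only point requiring genuine care is that $f$ is defined on all of $\Omega_2$, i.e. that $\Omega_2$ lies entirely in the domain $\Sigma_+$ of $\ln\circ\det$, which is exactly what the identity $\Omega_2=\Omega_3\cap\Sigma_+$ provides. I note moreover that the \emph{strict} concavity in Lemma~\ref{lem-det} yields slightly more than claimed: the minimum can be attained only at vertices, since a minimizer lying in the relative interior of a positive-dimensional face would force $f$ to be affine along that face, contradicting strict concavity.
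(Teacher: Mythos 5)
Your proposal is correct and is essentially the argument the paper has in mind: the paper derives the lemma directly from the concavity of $\ln\circ\det$ on $\Sigma_+$ (Lemma~\ref{lem-det}) together with the compact convex polytope structure of $\Omega_2$, which is exactly what your Jensen-type estimate $f\bigl(\sum_j t_j V_j\bigr)\geq\min_j f(V_j)$ makes explicit. Your closing observation that strict concavity forces the minimizer to be a vertex is a valid (and slightly stronger) refinement, consistent with the paper's use of these vertices in Proposition~\ref{prop-vertex}.
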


\begin{lemma}\label{rk-sharpxi}
Suppose $Q$ 
is a vertex of $\Omega_2$, then 
 $\sharp(\Xi)\geq \frac{n(n+1)}{2}$. 
\end{lemma}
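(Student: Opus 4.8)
The plan is to exploit the polytope structure of $\Omega_2$ just established, together with a dimension count in the space $S(n)$ of symmetric matrices, which has dimension $\frac{n(n+1)}{2}$. Recall that $\Omega_2 = \Omega_3$ is cut out inside $S(n)$ by the $n$ linear equalities $\langle E_{ii}, Q\rangle = Q_{ii} = 1$ and by the finitely many affine inequalities $\langle v^t v, Q\rangle = vQv^t \geq 1$, where $v$ ranges over the nonzero vectors of $\{0,\pm 1\}^n$; here I use the inner product $\langle S_1, S_2\rangle = \operatorname{tr}(S_1 S_2)$ and write $E_{ii} = e_i^t e_i$. The key observation is that the normal of the constraint attached to $v$ is precisely the symmetric rank-one matrix $v^t v$, and that $v$ and $-v$ yield the same constraint and the same normal.

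First I would prove the standard characterization of a vertex: $Q$ is a vertex of $\Omega_2$ if and only if the normals of the constraints active at $Q$ span all of $S(n)$. For the nontrivial direction, suppose these normals fail to span $S(n)$; then there is a nonzero $S \in S(n)$ orthogonal to every active normal. Since $\langle E_{ii}, S\rangle = 0$ the diagonal stays fixed, and since $\langle v^t v, S\rangle = 0$ every active inequality stays exactly tight, so $Q + tS$ satisfies all equalities and all active inequalities for every $t$. Because the finitely many inactive inequalities are strict at $Q$ and $\Sigma_+$ is open, for $|t|$ small enough $Q + tS$ remains positive definite and still satisfies the inactive inequalities; hence $Q \pm tS \in \Omega_2$, contradicting that $Q$ is a vertex.

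Next I would identify the active normals with the shortest lattice vectors. Since $e_i Q e_i^t = Q_{ii} = 1$, each $\pm e_i$ is a shortest vector, so the shortest length of $\Lambda_n^*$ equals $1$; consequently an inequality $vQv^t \geq 1$ is active exactly when $v$ is a shortest vector. Thus the collection of all active constraint normals (the equality normals $E_{ii} = e_i^t e_i$ already being of this form, since $e_i \in \Xi$) is precisely $\{v^t v : v \in \Xi\}$. If $Q$ is a vertex, this set spans $S(n)$ by the characterization above, and a spanning set of the $\frac{n(n+1)}{2}$-dimensional space $S(n)$ has at least $\frac{n(n+1)}{2}$ elements, whence $\sharp(\Xi) \geq \frac{n(n+1)}{2}$.

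The point requiring the most care is the vertex characterization, specifically verifying that the perturbation $Q \pm tS$ remains inside $\Omega_2$: one must simultaneously preserve positive definiteness (using openness of $\Sigma_+$) and keep the strictly satisfied inequalities satisfied (using that there are only finitely many of them, as guaranteed by $\Omega_2 = \Omega_3$). A secondary point is confirming that every active defining inequality corresponds to a genuine shortest vector recorded in $\Xi$; this is immediate once the shortest length is pinned to $1$ and the defining inequalities of $\Omega_3$ range over $\{0,\pm1\}^n$, which by Theorem~\ref{thm:special} captures all shortest vectors in the relevant range of $n$. Note that any shortest vector lying outside $\{0,\pm1\}^n$ would only enlarge $\Xi$, so the inequality is unaffected.
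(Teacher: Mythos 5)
Your proof is correct and takes essentially the same approach as the paper: the paper's own argument is exactly this dimension count, observing that a vertex of $\Omega_2\subset S(n)$ must have active constraints of rank $\frac{n(n+1)}{2}$, hence at least $\frac{n(n+1)}{2}$ integer vectors with $vQv^t=1$, each of which is a shortest lattice vector. You merely make explicit two steps the paper leaves implicit --- the standard vertex characterization via the perturbation $Q\pm tS$ (legitimately reduced to finitely many inequalities through $\Omega_2=\Omega_3$) and the identification of the active normals with $\{v^tv \mid v\in\Xi\}$ --- so no further comparison is needed.
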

\begin{proof}
Since $Q$ is a vertex and $\dim{S(n)}=\frac{n(n+1)}{2}$, it satisfies the constraints given in \eqref{eq-qvert}, 
among which there should be a system of linear equations with rank $\frac{n(n+1)}{2}$. As a result, there should be at least $\frac{n(n+1)}{2}$ integer vectors such that $vQv^t= 1$, from which the conclusion follows. 
\end{proof}
\begin{proposition}\label{prop-vertex}
Suppose $n\leq 4$, then every vertex of $\Omega_2$ can determine a $\lambda_1$-minimal flat $n$-torus in some sphere. Up to congruence, 
these $\lambda_1$-minimal flat $n$-tori are \\
\noindent(1) the equilaterial $2$-torus in $\mathbb{S}^5$ given in Example~\ref{ex:3-prod} for  $n=2$; \\
\noindent(2) the $\lambda_1$-minimal flat $3$-torus in $\mathbb{S}^{11}$ given in Example~\ref{ex:3-3} for  $n=3$; \\
\noindent(3) the $\lambda_1$-minimal flat $4$-torus in $\mathbb{S}^{19}$ 
given in Example~\ref{ex:10}, or those $\lambda_1$-minimal flat $4$-tori given in Example~\ref{ex-4tori} for  $n=4$.
\end{proposition}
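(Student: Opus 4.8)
The plan is to exploit the fact that a vertex of the polytope $\Omega_2$ is a $0$-dimensional face, hence is cut out by a family of active constraints whose normals span all of $S(n)$. Concretely, if $Q$ is a vertex, then among the constraints $\langle v^tv, Q\rangle = 1$ with $v\in\mathbb{Z}^n$ that are active at $Q$ there are $\frac{n(n+1)}{2}$ with linearly independent matrices $v^tv$; this is precisely what the proof of Lemma~\ref{rk-sharpxi} records, and it also yields $\sharp(\Xi)\geq\frac{n(n+1)}{2}$. The active vectors are exactly the shortest vectors $\Xi$ of $\Lambda_n^*$, and since the diagonal normalization forces $e_1,\dots,e_n\in\Xi$ we get ${\rm rank}(\Xi)=n$ for free. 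The crucial consequence is that, because $\{A^tA : A\in\Xi\}$ spans $S(n)$, the affine system $\langle A^tA, Q\rangle = 1$ has a unique solution, so $Q$ is completely pinned down by the combinatorial datum $\Xi$.

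First I would enumerate all admissible configurations $\Xi$. By Theorem~\ref{thm-gen}, for $n\leq 4$ a lattice with ${\rm rank}(\Xi)=n$ is either prime or, when $n=4$, the exceptional lattice generated by the rows of \eqref{eq:exception}. In the prime case Remark~\ref{rk-maxxi} gives $\sharp(\Xi)\leq\frac{n(n+1)}{2}$, so the vertex bound forces the equality $\sharp(\Xi)=\frac{n(n+1)}{2}$. The enumeration of maximal shortest-vector configurations carried out in Sections~\ref{sec-lattice} and~\ref{sec-4} then determines $\Xi$ up to the action of $SL(n,\mathbb{Z})$: for $n=2$ the only configuration with three shortest vectors is the hexagonal one (equilateral torus); for $n=3$ the unique prime configuration with $\sharp(\Xi)=6$ is that of Example~\ref{ex:3-3}; for $n=4$ the unique prime configuration with $\sharp(\Xi)=10$ is that of Example~\ref{ex:10}. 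In the remaining non-prime case ($n=4$) the exceptional lattice has $\sharp(\Xi)=12\geq 10$ and its unique Gram matrix is \eqref{eq-excep1}, already treated in Proposition~\ref{prop:except}.

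It then remains to confirm that each such vertex genuinely determines a $\lambda_1$-minimal immersion, and conversely that each listed example is a vertex. For a given admissible $\Xi$, the unique $Q$ solving $\langle A^tA, Q\rangle=1$ coincides with the Gram matrix recorded in the corresponding Example, and the explicit weights $(c_1^2,\dots,c_N^2)$ listed there are non-negative and satisfy \eqref{eq:flat}; hence $\frac{Q^{-1}}{n}$ lies in the convex hull of $\{A^tA : A\in\Xi\}$ and the matrix data define an isometric minimal immersion by the first eigenfunctions. In the prime cases $\{A^tA : A\in\Xi\}$ is a basis of $S(n)$, so Corollary~\ref{cor-unique} makes the immersion unique up to congruence; in the exceptional case the twelve matrices $A^tA$ have rank only $10$, uniqueness fails, and one recovers instead the two-parameter family of Example~\ref{ex-4tori} exactly as in Proposition~\ref{prop:except}. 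For the reverse inclusion, each listed example satisfies $vQv^t\geq 1$ for all $v\in\mathbb{Z}^n\setminus\{0\}$ (checkable via Theorem~\ref{thm:special}) and has $\{A^tA\}$ of full rank $\frac{n(n+1)}{2}$, so it is indeed a vertex of $\Omega_2$.

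The main obstacle is the step linking the purely convex-geometric notion of a vertex (the active forms $\{A^tA\}$ spanning $S(n)$, i.e. perfection of $Q$) to the existence of the immersion (the requirement $\frac{Q^{-1}}{n}\in C_\Xi$), since in general a perfect form need not admit such a non-negative convex representation. The device that circumvents this gap is the rigidity of low dimensions: the vertex condition together with Theorem~\ref{thm-gen} and Remark~\ref{rk-maxxi} leaves only finitely many configurations, and for each of them the explicit non-negative weights supplied in the Examples exhibit the required convex representation directly, so no general perfect-to-eutactic implication is needed.
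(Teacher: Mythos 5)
Your proposal is correct and follows essentially the same route as the paper's proof: you combine Lemma~\ref{rk-sharpxi} with Remark~\ref{rk-maxxi} and Theorem~\ref{thm-gen} to force $\sharp(\Xi)=\frac{n(n+1)}{2}$ in the prime case, invoke the classification of Section~\ref{sec-4} to identify the ladder configuration up to $SL(n,\mathbb{Z})$ (and Proposition~\ref{prop:except} for the exceptional lattice), and settle both inclusions by the explicit matrix data, exactly as the paper does. Your explicit remark that the vertex condition alone (the forms $\{A^tA\}$ spanning $S(n)$) does not automatically give the convex representation $\frac{Q^{-1}}{n}\in C_X$, and that the gap is closed only because the finitely many surviving configurations carry explicit non-negative weights, makes precise a point the paper leaves implicit but resolves by the same device.
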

\begin{proof}
Firstly, for these examples, using their matrix data, one can check directly that the Gram matrix $Q$ belongs to $\Omega_2$, and the rank of 
$\{A^tA | A\in Y\}$ is exactly $\frac{n(n+1)}{2}$. Therefore they all correspond to the vertices of $\Omega_2$. 

Suppose $Q$ is a vertex of $\Omega_2$. If the lattice determined by $Q$ is not prime, then we arrive at the only exceptional torus, and the conclusion follows from Proposition~\ref{prop:except}. 

Next, we assume the lattice determined by $Q$ is prime. Let $X$ be the set of integer vectors corresponding to $\Xi$. Combining Remark~\ref{rk-maxxi} and Lemma~\ref{rk-sharpxi}, 
we can derive that $\sharp(\Xi)=\frac{n(n+1)}{2}$. Furthermore, it follows from the discussion in Section~\ref{sec-4} that $X$ is exactly the ladder set $X_n$ up to a unimodular transformation in $SL(n,\mathbb{Z})$. Note that $W_{X_n}$ is a singleton set. So $Q$ is congruent to $Q_n$ by a unimodular transformation in $SL(n,\mathbb{Z})$, which completes the proof of this proposition.  
\end{proof}
Combining Lemma~\ref{lem-minimal} with Proposition~\ref{prop-vertex}, we can directly calculate the minimum of $\det$ on $\Omega_2$ (hence on $\Omega_1$), from which Theorem \ref{thm1} follows. 

For a given flat metric $g_0$ on $n$-torus $T^n$, it was proved by El Soufi and Ilias in \cite{Sou-Ili3} that $g_0$ maximizes $\mathcal{L}(g)$ in $[g_0]$, if the first eigenspace of $g_0$ is of dimension no less than $2n$. Combining this with Theorem~\ref{thm1}, we can obtain Theorem \ref{thm2}. 

\textbf{Acknowledgement:}
The first author and the third author is supported by NSFC No. 12171473. The second author is supported by NSFC No. 11971107. The authors are grateful to Prof. C.P. Wang for his continuous encouragement on this work. 
The authors are thankful to Prof. Q.-S. Chi and Prof. R. Kusner for valuable discussions.


\vspace{5mm} \noindent Ying L\"u\\
{\small\it  School of Mathematical Sciences, Xiamen University, Xiamen, 361005, P. R. China.\\
Email: {lueying@xmu.edu.cn}}

\vspace{5mm} \noindent Peng Wang\\
{\small\it  School of Mathematics and Statistics, FJKLMAA, Fujian Normal University, Fuzhou 350117, P. R. China.\\
Email: {pengwang@fjnu.edu.cn}}

\vspace{5mm} \noindent Zhenxiao Xie\\
{\small\it Department of Mathematics, China University of Mining and Technology (Beijing),
Beijing 100083, P. R. China.
Email: {xiezhenxiao@cumtb.edu.cn}}

\end{document}